\newcommand{\nocontentsline}[3]{}
\newcommand{\tocless}[2]{\bgroup\let\addcontentsline=\nocontentsline#1{#2}\egroup}
\newcommand*{\minnefont}{\fontfamily{pzc}\selectfont}
\DeclareTextFontCommand{\minnetextcommand}{\minnefont\color{green}}
\newcommand*{\davidfont}{\fontfamily{lmtt}\selectfont}
\DeclareTextFontCommand{\davidcomment}{\davidfont\color{purple}}
\theoremstyle{plain}
\newtheorem*{theorem*}{Theorem}
\newtheorem{theorem}{Theorem}[section]
\newtheorem{lemma}[theorem]{Lemma}
\newtheorem{cor}[theorem]{Corollary}
\newtheorem{prop}[theorem]{Proposition}
\theoremstyle{definition}
\newtheorem{D}[theorem]{Definition} 
\newtheorem{rem}[theorem]{Remark}
\newtheorem{example}[theorem]{Example}
\newcommand{\setN}{\mathbb{N}}
\newcommand{\setR}{\mathbb{R}}
\newcommand{\cT}{\mathcal{T}}
\newcommand{\eps}{\varepsilon}
\newcommand{\df}{\coloneqq}
\newcommand{\fd}{\eqqcolon}
\let\phi\varphi
\newcommand{\weakto}{\rightharpoonup}
\DeclareMathOperator{\Tr}{Tr}
\DeclareMathOperator{\sgn}{sgn}
\newcommand{\di}{\mathop{}\!\mathrm{d}}
\newcommand{\loc}{{\rm loc}}
\DeclareMathOperator{\supp}{supp}
\newcommand{\Leb}{\mathrm{Leb}}
\DeclareMathOperator{\Lip}{Lip}
\newcommand{\haus}{\mathscr{H}}
\newcommand{\leb}{\mathscr{L}}
\newcommand{\measrestr}{%
  \,\raisebox{-.127ex}{\reflectbox{\rotatebox[origin=br]{-90}{$\lnot$}}}\,%
}
\newcommand{\dist}{\mathsf{d}}
\DeclareMathOperator{\RCD}{RCD}
\DeclareMathOperator{\Span}{Span}
\DeclareMathOperator*{\esssup}{ess\,sup}
\newfont{\tmpf}{cmsy10 scaled 2500}
\newcommand{\R}{\ensuremath{\mathbb R}}
\DeclareMathOperator{\AMV}{AMV}
\DeclareMathOperator{\SAMV}{SAMV}
\def\un{\mathbf{1}}
\title[$\SAMV$ and $\AMV$ Laplacian in metric measure spaces]{Symmetrized and non-symmetrized Asymptotic Mean Value Laplacian in metric measure spaces}
\author{Andreas Minne}
\address{A.~Minne: KTH Royal Institute of Technology,  100 44 Stockholm,  Sweden.}
\email{minne@kth.se}
\author{David Tewodrose}
\address{D.~Tewodrose: Nantes Université, Laboratoire de Mathématiques Jean Leray, UMR CNRS 6629, 2 rue de la Houssinière
BP 92208
F-44322 Nantes Cedex 3, France.}
\email{David.Tewodrose@univ-nantes.fr}
\date{}
\begin{document}

\nocite{*} 

\maketitle

\begin{abstract}
The asymptotic mean value Laplacian --- AMV Laplacian --- extends the Laplace operator from $\mathbb{R}^n$ to metric measure spaces through limits of averaging integrals. The AMV Laplacian is however not a symmetric operator in general.  Therefore we consider a symmetric version of the AMV Laplacian, and focus lies on when the symmetric and non-symmetric AMV Laplacians coincide.  Besides Riemannian and 3D contact sub-Riemannian manifolds,  we show that they are identical on a large class of metric measure spaces, including locally Ahlfors regular spaces with suitably vanishing distortion.  In addition, we study the context of weighted domains of $\setR^n$ --- where the two operators typically differ --- and provide explicit formulae for these operators, including points where the weight vanishes.
\end{abstract}

\hfill

\keywords{Keywords. Laplace operator, Metric measure space, Mean value property, Average integral.}

\hfill

\keywords{AMS 2010 Classification Numbers. Primary: 35J05.  Secondary: 35B05, 30L99.}

\tableofcontents

\section{Introduction}

For any $u \in C^2(\setR^n)$ and $x \in \setR^n$,  the second-order Taylor expansion of $u$ at $x$ yields the identity
\[
\fint_{B_{r}(x)}u(y)-u(x) \di y=c_n r^{2}\Delta u(x)+o(r^{2}) 
\]
as $r \downarrow 0$, where $c_n \df 1/(2n+4).$ Building upon this elementary observation, we proposed in \cite{MT} a definition for the pointwise Laplacian of a locally integrable function $u$ defined on a general metric measure space $(X,\dist,\mu)$ by setting
\begin{equation}\label{eq:nonrigorous_old_def}
\Delta^{\dist}_{\mu} u(x)\coloneqq \lim\limits_{r \downarrow 0} \frac{1}{r^2} \fint_{B_r(x)}u(y) - u(x)\di \mu(y)
\end{equation}
for any $x \in X$ where the limit converges, see Definition \ref{def:AMV} for a rigorous definition. The quantity $\Delta^{\dist}_{\mu} u(x)$ is called the \emph{pointwise Asymptotic Mean Value} (AMV for short) \emph{Laplacian} of $u$ at $x$. We proved several results including maximum and comparison principles for the operator $\Delta^{\dist}_{\mu}$. 

However,  a disadvantage of $\Delta^{\dist}_{\mu}$ is that it --- in general --- is not a self-adjoint operator on $L^2(X,\mu)$, see \cite[Theorem 5.3 and Remark 5.4]{MT}.  To face this issue, we consider in this paper a \emph{symmetrized} version of $\Delta^{\dist}_{\mu}$, inspired by the symmetrized $r$-Laplacian introduced by Adamowicz,  Kijowski and Soultanis \cite{AKS_2},  which we define as
\[
\tilde{\Delta}^{\dist}_{\mu}u(x)\coloneqq\lim_{r\downarrow 0}\frac{1}{2r^2}\fint_{B_r(x)} (u(y) -u(x)) \left( 1 + \frac{\mu(B_r(x))}{\mu(B_r(y))} \right) \di \mu(y).
\]
We call $\tilde{\Delta}^{\dist}_{\mu}u(x)$ the \emph{pointwise Symmetrized Asymptotic Mean Value} (SAMV for short) \emph{Laplacian} of $u$ at $x$.  The Fubini theorem together with the symmetry of the metric show that
\begin{equation*}
\int_X (\tilde{\Delta}^{\dist}_{\mu}u) v \di \mu = \int_X u(\tilde{\Delta}^{\dist}_{\mu}v) \di \mu
\end{equation*}
for any $u,v \in L^2(X,\mu)$ for which $\tilde{\Delta}^{\dist}_{\mu}u$ and $\tilde{\Delta}^{\dist}_{\mu}v$ converge in $L^2(X,\mu)$; see Remark \ref{rk:sym}.

The primary goal of this article is to study for which instances the operators $\Delta^{\dist}_{\mu}$ and $\tilde{\Delta}^{\dist}_{\mu}$ coincide, and in which sense they do.  This coincidence depends on the infinitesimal interplay between the metric and the measure. For that reason,  the asymptotic behavior as $r \downarrow 0$ of the function 
\[
\delta_r\colon (x,y) \mapsto  1 - \frac{\mu(B_r(x))}{\mu(B_r(y))} 
\]
which we call the \textit{$r$-distortion} of the space (see Definition \ref{def:distortion}),
plays a critical role. Indeed,  the assumption
\[
 \sup_{y \in B_r(x)} | \delta_r(x,y) | =o(r^2)
\]
as $r \downarrow 0$ ensures that the pointwise equality $\Delta^{\dist}_{\mu}u(x) = \tilde{\Delta}^{\dist}_{\mu}u(x)$ holds, see Proposition \ref{prop1}.  On smooth Riemannian manifolds equipped with the canonical Riemannian distance and volume measure, this observation coupled with the classical volume expansion of asymptotically small balls easily yield that the $\AMV$ and $\SAMV$ Laplacians coincide almost everywhere, see Proposition \ref{prop:Riem}.  A similar expansion found by Barilari, Beschatnyi and Lerario \cite{BBL} implies that the same holds true on $3$-dimensional contact sub-Riemannian manifolds equipped with the Popp volume, see Proposition \ref{prop:subRiem}; the case of general sub-Riemannian manifolds would be worth a deeper investigation.

In our previous article, we also defined a weak $\AMV$ Laplacian \cite[Definition 5.5]{MT}.  We refine this notion in the present paper, and we additionally propose a definition of weak $\SAMV$ Laplacian: see Definition \ref{def:weak}.  In Proposition \ref{prop:extension_of_AKS}, we prove that on a locally Ahlfors regular space, if
\[
 \frac{1}{r} \left( \fint_{B_r(\cdot)} |\delta_r(\cdot,y)| \di \mu(y)\right)\di \mu(\cdot) \weakto 0
\]
as $r \downarrow 0$, in the sense of weak convergence of measures against compactly supported continuous functions, then the weak $\AMV$ and $\SAMV$ Laplacians of any Lipschitz function coincide if one of the two exists. This allows us to remove the curvature assumption made in \cite{AKS_2} where the coincidence between the weak $\AMV$ and $\SAMV$ Laplacians was established for non-collapsed $\RCD(K,N)$ spaces with vanishing metric-measure boundary, see Remark \ref{rem:extension_of_AKS}.

The secondary goal of this article is to derive concrete formulae for the pointwise $\AMV$ and $\SAMV$ Laplacians in weighted domains of $\setR^n$, that is to say metric measure spaces of the form $(\Omega,\dist,\mu)$ where $\Omega$ is a domain of $\setR^n$, $\dist$ is a distance on $\Omega$ and $\mu$ is absolutely continuous with respect to the restriction of the Lebesgue measure $\leb^n$ to $\Omega$, with Radon--Nikodym derivative $w$. In this context, there are two different cases depending on whether $w$ is strictly positive or zero at the point $x \in \Omega$. 

If $w(x)>0$ we assume that $w \in C^1(\Omega)$ and that $\dist$ satisfies a mild symmetry assumption, \eqref{eq:symmetricball}. We show that in this case, if the matrix of limits of average second-moments 
\[
M(x) \df \lim\limits_{r \downarrow 0} \left( \fint_{B_r(x)} \frac{(y-x)_i(y-x)_j}{r^2} \di y \right)_{1\le i,j \le n}
\]
exists, then for any $u \in C^2(\Omega)$,
\begin{equation}\label{eq:Delta}
\Delta_{\mu}^{\dist}u(x)	 =\frac{1}{2}\Tr(M(x)\nabla^2 u(x))+\frac{1}{w(x)}\langle\nabla w(x),M(x)\nabla u(x)\rangle.
\end{equation}
If in addition one has
\begin{equation}\label{eq:z/r}
\frac{1}{r} \fint_{B_r(x)} \left| 1 - \frac{\leb^n(B_r(x))}{\leb^n(B_r(y))}\right| \di y  \to 0
\end{equation}
as $r \downarrow 0$, then
\begin{equation}\label{eq:tildeDelta}
\tilde{\Delta}_{\mu}^{\dist}u(x)=\frac{1}{2w(x)}\Tr(M(x)\nabla(w\nabla u)(x)),
\end{equation}
see Proposition \ref{prop:sym_Hausdorff_distortion}.  In case $\dist$ is associated with a norm,
equality \eqref{eq:Delta} has already been established in \cite{AKS_1} in an $L^p_{\text{loc}}$ sense (see Section~\ref{sec:preliminaries}); in this case the limit matrix $M(x)$ always exists and is independent of $x$. For the Euclidean distance, \eqref{eq:Delta} was proven in \cite[Proposition~2.3]{MT}. It is likely that \eqref{eq:tildeDelta} also holds in an $L^p_{\text{loc}}$ sense under an appropriate $L^p_{\text{loc}}$ replacement of \eqref{eq:z/r}, but we do not investigate this question in this paper.

If $w(x)=0$, we restrict our study to the case where $\dist$ is associated with a norm, hence we can assume without loss of generality that $x = 0_n \in \Omega$. We also assume that $w$ is locally integrable, in which case the assumption $w(x)=0$ must be understood in a Lebesgue point sense.  In Proposition~\ref{prop:as_even_weights}, we show that under the infinitesimal evenness property
\[
 \text{$\leb^n$-}\esssup\limits_{x \in B_r(0_n)}  |w(x)-w(-x)|=o\left( r \fint_{B_r(0_n)} w \di \leb^n \right) \qquad \text{as $r \downarrow 0$},
\]
and the existence of a weak limit measure
\[
\nu \df \lim\limits_{r \downarrow 0}\frac{w(r\, \cdot)r^n}{\mu(B_r(x))} \leb^n \measrestr B_1(0_n),
\]
setting $M_\nu\coloneqq \left( \int_{B_1(0_n)} y_i y_j \di \nu(y) \right)_{1\le i,j \le n}$, we get
\[
\Delta^{\dist}_{\mu}u(0_n)=\frac{1}{2} \Tr (M_\nu \nabla^2u)(0_n).
\]
Under a natural comparability assumption in the spirit of \cite{NaorTao,Aldaz1,Aldaz2} (see Definition~\ref{def:comparability}), we get an analogous result in the case of the $\SAMV$ Laplacian, see Proposition  \ref{prop:sym_as_even_weights}. 

Let us conclude this section with a last comment: Proposition \ref{prop:positive_weights} shows that on weighted Euclidean domains the $\SAMV$ Laplacian coincides with the usual weighted Laplacian $\Delta u + \langle\nabla \ln w,\nabla u\rangle$ (also known as drifted Laplacian, or $f$-Laplacian, or Witten Laplacian,  see e.g.~\cite{munteanu2011smooth,colbois2015eigenvalues} and the references therein) while the $\AMV$ does not.  This suggests that $\tilde{\Delta}^{\dist}_{\mu}$ is the natural $\AMV$ Laplacian to consider in the future,  in accordance with \cite{AKS_2} where local minimizers of the Korevaar--Schoen energy with target $\setR$ are identified as those Sobolev maps having zero weak $\SAMV$ Laplacian. 

\smallskip\noindent
\textbf{Acknowledgements.} The first author is grateful for
the support of the Knut and Alice Wallenberg Foundation, Project KAW 2015.0380.  The second author is supported by Laboratoire de Mathématiques Jean Leray via the project Centre Henri Lebesgue and Fédération de recherche Mathématiques de Pays de Loire via the project Ambition Lebesgue Loire. The authors thank Giorgio Stefani and Luca Rizzi for pointing out the reference \cite{BBL}. They are also both grateful for  precious remarks made by the anonymous referees.  The second author thanks Gilles Carron for inspiring discussions and Luca Rizzi for his invitation at SISSA Trieste — funded by the European Research Council (ERC) under the European Union’s Horizon 2020 research and innovation programme (grant agreement No. 945655) — where helpful discussions took place.

\section{Preliminaries}\label{sec:preliminaries}

Throughout the paper,  we write $\AMV$ as a shorthand for ``Asymptotic Mean Value'' and $\SAMV$ for ``Symmetrized Asymptotic Mean Value''.  We keep a positive integer $n$ fixed and we denote by $0_n$ the origin of the vector space $\setR^n$,  by $\dist_{e}$ the Euclidean distance on $\setR^n$,  by $\omega_n$ the Lebesgue measure of the unit Euclidean ball and by $\sigma_{n-1}$ the total surface measure of the unit Euclidean sphere.

We say that a triple $(X,\dist,\mu)$ is a metric measure space if $(X,\dist)$ is a metric space and $\mu$ is a fully supported Borel measure on $(X,\dist)$ such that $0<\mu(B_r(x))<+\infty$ for any $x \in X$ and $r>0$.  In case $X$ is an open subset of a space equipped with a norm $\|\cdot\|$ and $\dist$ is the canonical distance associated with $\|\cdot\|$, we write $(X,\|\cdot\|,\mu)$.

Let $(X,\dist,\mu)$ be a metric measure space. We use classical notation to denote function spaces on $X$, like $C(X)$ for the set of continuous functions, $\Lip(X)$ for the set of Lipschitz functions,  and so on.  We set $\overline{\setR}\coloneqq\setR\cup\{\pm \infty\}$.  We shall often identify a $\mu$-measurable function $f : X \to \overline{\setR}$ with its equivalence class under $\mu$-a.e.~equality. For any $p>0$, we let $L^p(X,\mu)$ be the set of $p$-integrable functions, that is, $\mu$-measurable functions $f : X \to \overline{\setR}$ such that $\int_X |f|^p \di \mu < +\infty$. We also let $L^p_{\text{loc}}(X,\mu)$ be the set of locally $p$-integrable functions, that is,  $\mu$-measurable functions $f : X \to \overline{\setR}$ such that any $x \in X$ admits a neighborhood $V$ such that $\int_V |f|^p \di \mu < +\infty$.  We let $L^\infty(X,\mu)$ be the set of essentially bounded $\mu$-measurable functions on $X$, and $L_\text{loc}^\infty(X,\mu)$ be the set of locally essentially bounded $\mu$-measurable functions on $X$, that is, the $\mu$-measurable functions $f : X \to \overline{\setR}$ such that $\mu$-a.e.~$x \in X$ admits a neighborhood $V$ such that the essential supremum of $f$ on $V$ is bounded from above.  In case $X$ is an open subset of $\setR^n$ and $\mu$ is the Lebesgue measure, we simply write $L^p(X)$, $L^p_{\text{\loc}}(X)$, $L^\infty(X)$, and  $L^\infty_{\text{\loc}}(X)$ respectively. If $A$ is a bounded Borel subset of $X$,  we write $\un_A$ for the characteristic function of $A$, and for any locally integrable function $u:X \to \R$, we set
\[
\fint_A u \di \mu \coloneqq \frac{1}{\mu(A)} \int_A u \di \mu. 
\] 

Assume that $(X,\dist)$ is locally compact. Under this assumption,  we shall often make use of compact sets.  We use the subscript $c$ to denote the restriction of a function space to compactly supported functions: for instance,  by $C_c(X)$ we mean the set of compactly supported continuous functions.  We recall that a Radon measure $\nu$ on $(X,\dist)$ is a Borel measure which is locally finite and inner regular, meaning that any $x \in X$ admits a neighborhood $V$ such that $\nu(V)<+\infty$, and any measurable subset $A \subset X$ satisfies $\mu(A)=\sup\{\mu(K) \, : \, K \subset A \, \, \text{compact}\}$, respectively.  The Riesz--Markov--Kakutani theorem states that the space of Radon measures $\mathrm{Rad}(X)$ is the topological dual of $C_c(X)$. Therefore, we equip $\mathrm{Rad}(X)$ with the associated weak topology, and we denote by $\weakto$ the convergence in this topology.  Lastly, for any $f \in L^1_{\text{loc}}(X,\mu)$,  we denote by $f \mu$ the Radon measure defined by
\[
(f \mu)(A) \df \int_A f \di \mu
\]
for any measurable $A \subset X$.

\hfill

\paragraph{\textbf{Lebesgue points.}}

Let $(X,\dist,\mu)$ be a metric measure space. For any $u \in L^1_{\text{loc}}(X,\mu)$, we recall that $x \in X$ is a Lebesgue point of $u$ if there exists some real number $u^{*}(x)$ such that
\begin{equation}\label{eq:Lebesguepoints}
\lim\limits_{r\downarrow 0}\fint_{B_r(x)} |u(y)-u^*(x)|\di \mu(y) = 0.
\end{equation}
We write $\Leb(u)$ for the set of Lebesgue points of $u$. It is easily checked that neither the set $\Leb(u)$ nor the real number $u^*(x)$ for any $x \in \Leb(u)$ depend on the choice of a representative in the equivalence class of $u$.  Moreover, the following hold.
\begin{enumerate}
\item If $(X,\dist)$ is locally compact, then for any $u \in C(X)$ a simple argument based on the Heine--Cantor theorem shows that $\Leb(u)=X$ and $u^*(x)=u(x)$ for any $x \in X$.
\item If $(X,\dist,\mu)$ is infinitesimally doubling, meaning that
\begin{equation}\label{eq:infinitesimaldoubling}
\limsup_{r \downarrow 0} \frac{\mu(B_{2r}(x))}{\mu(B_r(x))}<+\infty
\end{equation}
for $\mu$-a.e.~$x \in X$, then $\mu(X\backslash \Leb(u))=0$ for any $u \in L^1_{\text{loc}}(X,\mu)$ and $u^*(x)=u(x)$ for $\mu$-a.e.~$x \in X$. This follows from the Lebesgue Differentiation Theorem for infinitesimally doubling metric measure spaces: see \cite[Theorem 3.4.3 and page 77]{HKST}.
\end{enumerate}

\hfill

\paragraph{\textbf{Comparability conditions.}}

We now introduce a couple of definitions which originate at least from \cite{NaorTao}. They have also appeared in a different way in  \cite{Aldaz1, Aldaz2}.

\begin{D}[Comparability conditions]\label{def:comparability}
Let $(X,\dist,\mu)$ be a metric measure space.

\begin{enumerate}
\item\label{1} We say that $\mu$ satisfies a \emph{comparability condition on $A \subset X$} if for any $x \in A$ there exist $r_0=r_0(x),C=C(x)>0$ such that for any $r \in (0,r_0)$, for $\mu$-a.e.~$y \in B_r(x)$,
\begin{equation}\label{eq:LCC}
\mu(B_r(x)) \le C \mu(B_r(y)).
\end{equation}
In case $A=\{x\}$ for some $x \in X$ we often say that $\mu$ satisfies a \emph{comparability condition at $x$}.
\item\label{2} We say that $\mu$ satisfies a \emph{locally uniform comparability condition} if any $x_0 \in X$ admits a neighborhood $V$ and two constants $r_0=r_0(x_0),C=C(x_0)>0$ such that the inequality \eqref{eq:LCC} holds for $\mu$-a.e.~$x \in V$, for any $r \in (0,r_0)$ and for $\mu$-a.e.~$y \in B_r(x)$.
\item\label{3} We say that $\mu$ satisfies a \emph{uniform comparability condition} if there exist $r_0,C>0$ such that for any $x \in X$ the inequality \eqref{eq:LCC} holds for any $r \in (0,r_0)$ and $\mu$-a.e.~$y \in B_r(x)$.
\end{enumerate}
\end{D}

Observe that: $\eqref{3} \Rightarrow \eqref{2} \Rightarrow  \eqref{1}$.

\begin{rem}\label{rem:locally_compact}
If $(X,\dist)$ is locally compact, then the neighborhoods in the previous definition can be chosen compact. In particular, if $\mu$ satisfies a locally uniform comparability condition, it is not difficult to show that for any compact set $K \subset X$ there exists $r_K,C_K>0$ such that 
\[
\mu(B_r(x)) \le C_K \mu(B_r(y))
\]
for $\mu$-a.e.~$x \in K$, any $r \in (0,r_K)$ and $\mu$-a.e.~$y \in B_r(x)$.
\end{rem}

\begin{example}
A metric measure space is doubling at scale $r_0>0$ if there exists a constant $C\ge 1$ such that
\[
\mu(B_{2r}(x)) \le C \mu(B_r(x))
\]
for any $x \in X$ and $r \in (0,r_0/2)$.  Any such space trivially satisfies a uniform comparability condition \eqref{3}, since for any $y \in B_r(x)$ where $x,r$ are as above,
\[
\mu(B_r(x)) \le \mu(B_{2r}(y)) \le C \mu(B_r(y)).
\]
\end{example}

\begin{example}\label{ex:Ahlfors}
For any $Q>0$,  we say that a metric measure space $(X,\dist,\mu)$ is Ahlfors $Q$-regular if there exist $C\ge 1$ and $r_0\in(0,+\infty]$ such that
\begin{equation}\label{eq:Ahlfors}
C^{-1} r^Q \le \mu(B_r(x)) \le C r^Q
\end{equation}
for any $x \in X$ and $r \in (0,r_0)$.  Non-trivial examples including fractal-type ones may be found in \cite{BourdonPajot,Laakso,KleinerSchioppa}, see also \cite{Barlow} where they are called fractional metric measure spaces.   Ahlfors regular spaces are obviously doubling, hence they satisfy a uniform comparability condition \eqref{3}. Moreover, we say that $(X,\dist,\mu)$ is locally Ahlfors $Q$-regular if any $x_0 \in X$ admits a neighborhood $V$ and two constants $r_0=r_0(x_0),C=C(x_0)>0$ such that
\[
C^{-1} r^Q \le \mu(B_r(x)) \le C r^Q
\]
for any $x \in V$ and $r \in (0,r_0)$.  In this case $\mu$ trivially satisfies a locally uniform comparability condition \eqref{2}.
\end{example}

\begin{example} Let us provide an example of a space which, according to the terminology introduced in Definition \ref{def:comparability}, satisfies \eqref{1} at any point but does not satisfy \eqref{3}. For any $n \in \setN\backslash \{0\}$,  let $\mathcal{C}_n$ be the circle of center $x_n\df (3n,0)$ with radius $1/n$ and $y_{n,1},\ldots,y_{n,n}$ be distinct points on $\mathcal{C}_n$.  Join each $y_{n,i}$ with $x_n$ by the geodesic segment between them and consider the set $X \subset \setR^2$ obtained as the union of all these segments and $[1,+\infty) \times \{0\}$.  Equip $X$ with the length distance $\dist$ induced by the Euclidean one on $\setR^2$ and with the Borel measure $\mu$ defined as the sum of the $1$-dimensional Hausdorff measure and Dirac masses at each $x_n$, $y_{n,i}$. Then $\mu(B_{3/(2n)}(x_n)) \le 1 + n + 3/2$ while $\mu(B_{3/(2n)}(y_{n,i})) = 5/2 + 1/n$ for any $n$ and any $i$, hence $(X,\dist,\mu)$ cannot satisfy \eqref{3}.
\end{example}

\hfill

\paragraph{\textbf{Averaging operator.}}

We recall now the definition and some properties of the averaging operator.

\begin{D}[Averaging operator]\label{def:adjoint}
Let $(X,\dist,\mu)$ be a metric measure space.  For any $r>0$, the \emph{averaging operator $A_r$} is defined by setting, for all $f \in L^1_{\text{loc}}(X,\mu)$ and $\mu$-a.e.~$x \in X$,
\[
A_rf(x)\coloneqq\fint_{B_r(x)} f \di \mu.
\]
\end{D}

\begin{rem}\label{rem}
Obviously $\sup_{r>0}\|A_r\|_{L^\infty(X,\mu)\to L^\infty(X,\mu)} \le 1$.  In addition,  if $f \in L^\infty_{\text{loc}}(X,\mu)$, then any $x \in X$ admits a neighborhood $V$ and a radius $r_0=r_0(x)$ such that $\|A_r f\|_{L^\infty(V,\mu)}<+\infty$ for any $r \in (0,r_0)$. Indeed, if $W$ is a neighborhood of $x$ such that $\|f\|_{L^\infty(W,\mu)}<+\infty$, then there exists $r_0>$ such that $B_{2r_0}(x) \subset W$, and for any $r \in (0,r_0)$ and $\mu$-a.e.~$y \in B_{r_0}(x)$,
\[
|A_rf(y)| \le \fint_{B_r(y)} |f(z)| \di \mu(z) \le \|f\|_{L^\infty(B_r(y),\mu)} \le \|f\|_{L^\infty(W,\mu)}<+\infty,
\]
hence we can take $V=B_{r_0}(x)$.
\end{rem}

\begin{lemma}\label{lemma_averaging}
Let $(X,\dist,\mu)$ be a metric measure space and $p \in [1,+\infty)$. 
\begin{enumerate}
\item Assume that $\mu$ satisfies a uniform comparability condition \eqref{3} with parameters $r_0,C$.  Then the operator $A_r$ is bounded from $L^p(X,\mu)$ into itself, with norm at most $C^{1/p}$, for any $r \in (0,r_0)$.
\item Assume that $\mu$ satisfies a locally uniform comparability condition. Then for any $f \in L_{\text{loc}}^p(X,\mu)$ and $x \in X$, there exist $r_0=r_0(x,f),C=C(x,f)>0$ and a neighborhood $V$ of $x$ also depending of $f$, such that for any $r \in (0,r_0)$,
\[
\int_V |A_r f|^p \di \mu < C \int_V  |f|^p \di \mu < +\infty.
\]
\end{enumerate}
\end{lemma}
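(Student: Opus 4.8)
The plan is to prove both statements by a direct application of Jensen's inequality (to pass the $p$-th power inside the average) followed by a Fubini-type interchange of integrals, using the comparability condition to control the resulting overlap factor. For part (1), fix $r \in (0,r_0)$ and $f \in L^p(X,\mu)$. For $\mu$-a.e.\ $x$, Jensen's inequality applied to the probability measure $\mu(B_r(x))^{-1}\mu\restr B_r(x)$ gives
\[
|A_r f(x)|^p \le \fint_{B_r(x)} |f(y)|^p \di \mu(y) = \int_X \frac{\un_{B_r(x)}(y)}{\mu(B_r(x))} |f(y)|^p \di \mu(y).
\]
Integrating in $x$ over $X$ and applying Tonelli's theorem (the integrand is nonnegative and measurable on $X \times X$, the measurability of $x \mapsto \mu(B_r(x))$ being standard), I get
\[
\int_X |A_r f(x)|^p \di \mu(x) \le \int_X |f(y)|^p \left( \int_X \frac{\un_{B_r(x)}(y)}{\mu(B_r(x))} \di \mu(x) \right) \di \mu(y).
\]
Now I use the symmetry $\un_{B_r(x)}(y) = \un_{B_r(y)}(x)$, so the inner integral is $\int_{B_r(y)} \mu(B_r(x))^{-1} \di \mu(x)$; since $x \in B_r(y)$ forces (for a.e.\ such $x$, after swapping the roles in \eqref{eq:LCC}) $\mu(B_r(y)) \le C\mu(B_r(x))$, i.e.\ $\mu(B_r(x))^{-1} \le C \mu(B_r(y))^{-1}$, the inner integral is bounded by $C \mu(B_r(y))^{-1}\mu(B_r(y)) = C$. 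This yields $\int_X |A_r f|^p \di \mu \le C \int_X |f|^p \di \mu$, i.e.\ $\|A_r\|_{L^p \to L^p} \le C^{1/p}$.

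For part (2), I localize the above argument. Given $f \in L^p_{\loc}(X,\mu)$ and $x \in X$, pick a neighborhood $W$ of $x$ with $\int_W |f|^p \di \mu < +\infty$; by the locally uniform comparability condition and Remark \ref{rem:locally_compact} (or directly from Definition \ref{def:comparability}\eqref{2}) choose a possibly smaller neighborhood $V$ of $x$ and constants $r_1, C>0$ so that \eqref{eq:LCC} holds for $\mu$-a.e.\ $x' \in V$, all $r \in (0,r_1)$, and $\mu$-a.e.\ $y \in B_r(x')$, and then shrink once more and pick $r_0 \le r_1$ so that the $r_0$-neighborhood $V_{r_0} \df \{z : \dist(z,V) < r_0\}$ of $V$ is contained in $W$. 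Then for $r \in (0,r_0)$, running the Jensen--Tonelli computation with the outer integral restricted to $V$ gives
\[
\int_V |A_r f|^p \di \mu \le \int_{V_{r_0}} |f(y)|^p \left( \int_{V \cap B_r(y)} \frac{\di \mu(x')}{\mu(B_r(x'))} \right) \di \mu(y) \le C \int_{V_{r_0}} |f|^p \di \mu \le C \int_W |f|^p \di \mu < +\infty,
\]
where I used $\un_{B_r(x')}(y) = \un_{B_r(y)}(x')$ and, for the inner bound, that $x' \in V$ with $y \in B_r(x')$ implies $\mu(B_r(x'))^{-1} \le C\mu(B_r(y))^{-1}$ for $\mu$-a.e.\ such $x'$ after swapping roles in \eqref{eq:LCC}. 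Renaming the rightmost bound $C\int_W |f|^p \di\mu$ as the promised finite constant (and noting $V$, $r_0$, $C$ all depend on $f$ through $W$) completes the proof.

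The only genuinely delicate point — and the step I expect to require the most care — is the application of \eqref{eq:LCC} in the "swapped" form: the hypothesis bounds $\mu(B_r(x')) \le C\mu(B_r(y))$ for a.e.\ $y$ in a ball around a given center $x'$, whereas in the Tonelli computation I have integrated out $x'$ and need the inequality with the center and point exchanged. This is harmless because $\dist$ is symmetric, so $y \in B_r(x') \iff x' \in B_r(y)$, and one applies \eqref{eq:LCC} with center $y$ and point $x'$; but one must be slightly careful that the exceptional $\mu$-null sets line up — it suffices to invoke \eqref{eq:LCC} for $\mu$-a.e.\ center (which holds since, in the locally uniform case, the inequality is assumed for $\mu$-a.e.\ $x' \in V$, and in the uniform case for every center) and then Tonelli guarantees the null sets do not affect the double integral. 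Everything else is a routine combination of Jensen's inequality and Tonelli's theorem.
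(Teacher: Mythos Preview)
Your approach is essentially the paper's: Jensen (the paper calls it H\"older) followed by Tonelli, with the comparability condition bounding the inner integral $\int_{B_r(y)}\mu(B_r(x'))^{-1}\di\mu(x')$ by $C$. For part~(1) the paper cites Aldaz for the $L^1$ case and then invokes Jensen, whereas you write out the Fubini computation directly; both are fine and your treatment of the ``swap'' is correct in the uniform case because \eqref{eq:LCC} holds for \emph{every} center.

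For part~(2) two small bookkeeping points need tightening. First, the swap you flag applies \eqref{eq:LCC} with \emph{center} $y$, but $y$ ranges over $V_{r_0}$, not just $V$; so when you ``shrink once more'' you must also arrange that $V_{r_0}$ lies inside the neighborhood on which the locally uniform comparability condition was obtained, not merely inside $W$. Second, your final bound is $C\int_W|f|^p\di\mu$, not $C\int_V|f|^p\di\mu$ as stated, and ``renaming'' it as a constant does not recover that form. Neither point is a genuine gap in the method---both are fixed by choosing $V$ small enough that its $r_0$-thickening sits inside both $W$ and the comparability neighborhood, and accepting the slightly larger set on the right-hand side (the paper's own proof keeps the same $V$ on both sides but then tacitly uses $B_r(z)\subset V$ in the equality $\int_V\fint_{B_r(z)}\cdots=\int_V\int_V\cdots$, so the issue is shared).
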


\begin{proof}
$\textit{(1)}$ If $\mu$ satisfies a uniform comparability condition with parameters $r_0,C$, then for any $x \in X$ and $r \in (0,r_0)$,
\[
a_r(x) \coloneqq \int_{B_r(x)} \frac{\di \mu(y)}{\mu(B_r(y))} \le C.
\]
This implies boundedness of $A_r$ from $L^1(X,\mu)$ to itself with norm at most $C$, see \cite[Theorem 3.3]{Aldaz1} for instance. Then the Jensen inequality implies the case $p >1$, see e.g.~ \cite[Theorem 2.9]{Aldaz2}. 

 $\textit{(2)}$ Consider $x \in X$ and $f \in L^p_{\text{loc}}(X,\mu)$. By \eqref{eq:2}, there exist $r_0=r_0(x,f),C=C(x,f)>0$ and a neighborhood $V$ of $x$ also depending of $f$ such that $\int_V |f|^p \di \mu< +\infty$, $B_{2r_0}(x) \subset V$ and $\mu(B_r(y))\le C \mu(B_r(z))$ for $\mu$-a.e.~$y \in V$, any $r \in (0,r_0)$ and $\mu$-a.e.~$z \in B_{r}(y)$. The Hölder inequality yields 
 \[
\left( \fint_{B_r(z)} |f(y)| \di \mu(y) \right)^{p} \le \fint_{B_r(z)} |f(y)|^p \di \mu(y). 
 \]
 Then
\begin{align*}
\int_V |A_r f|^p \di \mu \le \int_V \fint_{B_r(z)} |f(y)|^p \di \mu(y) \di \mu(z) & =  \int_V \int_V 1_{B_r(z)}(y) \frac{|f(y)|^p}{\mu(B_r(z))} \di \mu(y) \di \mu(z)\\
& \le \int_V  |f(y)|^p  \int_{B_r(y)} \frac{\di \mu(z)}{\mu(B_r(z))} \di \mu(y)\\
& \le C \int_V  |f(y)|^p \di \mu(y).
\end{align*}
\end{proof}

\begin{rem}[Adjoint of $A_r$]
Assume that a uniform comparability condition holds with parameters $r_0,C$. Then for any $r \in (0,r_0)$, the operator $A_r^{*}$ defined by 
\[
A_r^{*} f(x)\coloneqq\int_{B_r(x)} \frac{f(y)\di \mu(y)}{\mu(B_r(y))}
 \]
for $\mu$-a.e.~$x \in X$,  is bounded from $L^p(X,\mu)$ into itself for any $p \ge 1$.  Indeed,
\[
A_r^{*} f(x) = \fint_{B_r(x)} \frac{f(y)\mu(B_r(x))\di \mu(y)}{\mu(B_r(y))} \le C \fint_{B_r(x)} f(y)\di \mu(y) = CA_r f(x).
\]
Moreover, the Fubini theorem and the symmetry of the distance easily imply that 
\[
A_r^{*} f(x)\coloneqq\int_{B_r(x)} \frac{f(y)\di \mu(y)}{\mu(B_r(y))}
 \]
for $\mu$-a.e.~$x \in X$, for all $f \in L^2(X,\mu)$. This means that $A_r^*$ is the adjoint of $A_r$ with respect to the $L^2$ scalar product. More generally, we have that for any $p,q \ge 1$ such that $1/p+1/q=1$, one has
\[
\int_X (A_rg)h \di \mu = \int_X g (A_r^*h) \di \mu
\]
for all $g \in L^p(X,\mu)$ and $h \in L^q(X,\mu)$.
\end{rem}

\paragraph{\textbf{Distortion.}}

We finally introduce a couple of notions which helps us formalizing the infinitesimal relation between the metric and the measure of a metric measure space. Compare with \cite{KLP}.

\begin{D}\label{def:distortion}
Let $(X,\dist,\mu)$ be a metric measure space.  For any $r>0$, we define:
\begin{enumerate}
\item\label{11} for any $x,y \in X$,
\[
\delta_r (x,y) \df 1-\frac{\mu(B_r(x))}{\mu(B_r(y))} \, ,
\]
\item\label{22} for any $x \in X$,
\[
z_r(x) \coloneqq \fint_{B_r(x)} |\delta_r(x,y)| \di \mu(y) \, .
\]
\end{enumerate}
We say that $\delta_r$ is the  \emph{$r$-distortion function} of $(X,\dist,\mu)$.
 
 \begin{rem}
In \cite{KLP}, the authors considered the $r$-deviation functions
\[
v_r(x) \df 1 - \frac{\mu(B_r(x))}{\omega_n r^n}
\]
which measure in a very rough way how much a metric measure space differs from the Euclidean $\mathbb{R}^n$.  These deviation functions may behave quite differently from the distortion ones.  For instance, consider a smooth Riemannian manifold equipped with its canonical Riemannian distance and volume measure. At an interior point $x$ of such a space,
\[
v_r(x) = c S(x) r^2 + O(r^4) \quad \text{as $r \downarrow 0$,}
\]
where $S(x)$ is the scalar curvature at $x$ and $c$ is a dimensional constant (see \eqref{eq:scalar}).  However, we show in the proof of Proposition \ref{prop:Riem} that if $y=y(r) \in B_r(x)$ for any $r>0$, then
\[
\delta_r(x,y) = c(S(y)-S(x))r^2 + O(r^4) \quad \text{as $r \downarrow 0$,}
\]
where $O(r^4)$ does not depend on $y$.  In particular, if the scalar curvature is constant but non-zero, then $v_r(x)  =  c r^2 + O(r^4)$ but $\delta_r(x,y) = O(r^4)$.
 \end{rem}
\end{D}

\section{Definitions}\label{sec:definitions}

In this section, we refine the definitions of AMV Laplacian proposed in \cite{MT,AKS_1} and we put forward similar definitions for the SAMV Laplacian.

\hfill

\paragraph{\textbf{Pointwise AMV and SAMV Laplacians.}}
In order to define the pointwise $\AMV$ Laplacian of a locally integrable function $u$ over a metric measure space $(X,\dist,\mu)$,  in \cite[Definition 1.1]{MT} we made the convention of choosing the representative $\tilde{u}$ of $u$ defined by
\[
\tilde{u}(x) \df \lim\limits_{r \downarrow 0} \fint_{B_r(x)} u(y) \di \mu(y)
\]
for any $x \in X$ where this limit exist and then to set
\begin{equation}\label{eq:old_def}
\Delta^{\dist}_{\mu}u(x) \df \frac{1}{r^2}\fint_{B_r(x)} u(y) -\tilde{u}(x)\di \mu(y).
\end{equation}
However, this definition has a couple of disadvantages which we point out in Remark \ref{rk:issues}.  Therefore, we propose a new definition, formalized with the aid of Lebesgue points, along with a similar definition for the $\SAMV$ Laplacian.

\begin{D}[Pointwise $\AMV$/$\SAMV$ Laplacian]\label{def:AMV}
Let $(X,\dist,\mu)$ be a metric measure space and $u \in L^1_{\text{loc}}(X,\mu)$ be given.

\begin{enumerate}
\item For any $x \in \Leb(u)$ and $r>0$, we set
\[
\Delta^{\dist}_{\mu,r}u(x)\coloneqq\frac{1}{r^2}\fint_{B_r(x)} u(y) -u^*(x)\di \mu(y).
\]
If there exists a real number $\Delta^{\dist}_{\mu} u (x)$ such that
\begin{equation}\label{eq:new_def}
\lim\limits_{r \downarrow 0} \Delta^{\dist}_{\mu,r} u(x) = \Delta^{\dist}_{\mu}  u(x),
\end{equation}
then we say that $u$ admits a \emph{pointwise AMV Laplacian at $x$}, and we call $\Delta^{\dist}_{\mu}  u(x)$ the pointwise $\AMV$ Laplacian of $u$ at $x$.\\

\item For any $x \in \Leb(u)$ and $r>0$, we set
\[
\tilde{\Delta}^{\dist}_{\mu,r}u(x)\coloneqq\frac{1}{2r^2}\fint_{B_r(x)} (u(y) -u^*(x)) \left( 1 + \frac{\mu(B_r(x))}{\mu(B_r(y))} \right) \di \mu(y).
\]
If there exists a real number  $\tilde{\Delta}^{\dist}_{\mu} u(x)$ such that
\begin{equation}\label{eq:new_def_sym}
\lim\limits_{r \downarrow 0} \tilde{\Delta}^{\dist}_{\mu,r}u(x) = \tilde{\Delta}^{\dist}_{\mu} u(x),
\end{equation}
then we say that $u$ admits a \emph{pointwise SAMV Laplacian at $x$}, and we call $\tilde{\Delta}^{\dist}_{\mu}  u(x)$ the pointwise $\SAMV$ Laplacian of $u$ at $x$.
\end{enumerate}
\end{D}

\begin{rem}\label{rk:issues}
Let us explain why we favour \eqref{eq:new_def} over our previous definition \eqref{eq:old_def}. A first reason is that unlike \eqref{eq:old_def} this new definition does not require to choose any representative in the equivalent class of $u$. A second reason is the following: If $x \in \Leb(u)$ then $\tilde{u}(x)$ exists and is equal to $u^*(x)$. However, it may happen that $\tilde{u}(x)$ exists while $x \notin \Leb(u)$. For instance, let $u \in L^1(\setR)$ be the equivalent class of the sign function $\setR \backslash \{0\} \ni x \mapsto x/|x|$. Equip $\setR$ with the natural distance associated with the absolute value. Then $\tilde{u}(0)$ exists and is equal to $0$ while $0 \notin \Leb(u)$. Therefore, according to \eqref{eq:old_def} the function $u$ is pointwise AMV harmonic on $\setR$, while \eqref{eq:new_def} yields that $\Delta_\mu^\dist u (0)$ is not defined and $u$ is pointwise AMV harmonic on $\setR\backslash \{0\}$ only: this is in better agreement with the distributional Laplacian of $u$ being equal to a multiple of the derivative of the Dirac distribution in $0$, as mentioned in the last sentence of \cite{MT}.
\end{rem}

\paragraph{\textbf{Strong AMV/SAMV Laplacian.}} In \cite{AKS_1} the authors proposed a definition of strongly $\AMV$-harmonic functions by appealing on compact sets.  In order to work in a more general setting, we define here a strong $\AMV$ Laplacian and a strong $\SAMV$ Laplacian without involving compact sets.  Let $(X,\dist,\mu)$ be a metric measure space. Consider $u \in L^\infty_{\text{loc}}(X,\mu)$. According to Remark \ref{rem}, any $x \in X$ admits a neighborhood $V$ and a radius $r_0>0$ such that 
\[
\|\Delta^{\dist}_{\mu,r}u\|_{L^\infty(B_{r_0}(x),\mu)} \le \frac{2\|u\|_{L^\infty(V,\mu)}}{r^2}
\]
for any $r \in (0,r_0)$.  This observation guarantees the well-posedness of the following definition.

\begin{D}[Strong $\AMV$ Laplacian]
Let $(X,\dist,\mu)$ be a metric measure space. We say that $u \in L^\infty_{\text{loc}}(X,\mu)$ admits a strong $\AMV$-Laplacian if there exists a function $v \in L^\infty_{\text{loc}}(X,\mu)$ such that any $x \in X$ admits a neighborhood $V$ such that
\[
\lim\limits_{r \downarrow 0} \|\Delta^{\dist}_{\mu,r} u -v\|_{L^\infty(V,\mu)} = 0,
\]
in which case we say that $\Delta^{\dist}_{\mu} u \coloneqq v$ is the strong $\AMV$-Laplacian of $u$.
\end{D}

Let us provide a analogous definition in the symmetrized case. To this aim, we assume that $\mu$ satisfies a local comparability condition.  Under this condition, for any $u \in L^\infty_{\text{loc}}(X,\mu)$ and  $x \in X$, there exist $C,r_0>0$ and a neighborhood $V$ of $x$ such that
\[
\|\tilde{\Delta}^{\dist}_{\mu,r} u\|_{L^\infty(B_{r_0}(x))} \le \frac{2C\|u\|_{L^\infty(V,\mu)}}{r^2}
\]
for any $r\in(0,r_0)$. 

\begin{D}[Strong $\SAMV$ Laplacian]
Let $(X,\dist,\mu)$ be a metric measure space. Assume that $\mu$ satisfies a local comparability condition. We say that $u \in L^\infty_{\text{loc}}(X,\mu)$ admits a strong $\SAMV$-Laplacian if there exists a function $v \in L^\infty_{\text{loc}}(X,\mu)$ such that any $x \in X$ admits a neighborhood $V$ such that
\[
\lim\limits_{r \downarrow 0} \|\tilde{\Delta}^{\dist}_{\mu,r} u - v \|_{L^\infty(V,\mu)} = 0,
\]
in which case we say that $\tilde{\Delta}^{\dist}_{\mu} u\coloneqq v$ is the strong $\SAMV$-Laplacian of $u$.
\end{D}

\paragraph{\textbf{$L^p/L^p_{\text{loc}}$-AMV/SAMV Laplacian.}}  Let us consider $p\in [1,+\infty)$. Recall that Lemma \ref{lemma_averaging} ensures that under a uniform comparability condition, the operator $A_r$ is bounded from $L^p$ to itself for any sufficiently small $r$.  As a consequence, also $\Delta_{\mu,r}^\dist$ is bounded from $L^p$ to itself. Then the following definition is well-posed.

\begin{D}[$L^p$ $\AMV$/$\SAMV$ Laplacian]
Let $(X,\dist,\mu)$ be a metric measure space such that $\mu$ satisfies a uniform comparability condition. Consider $u \in L^1_{\text{loc}}(X,\mu)$ and $p\in [1,+\infty)$. 
\begin{enumerate}
\item We say that $u$ admits an $L^p$-\emph{AMV Laplacian} if there exists a function $v \in L^p(X,\mu)$ such that
\[
\lim\limits_{r \downarrow 0} \|\Delta^{\dist}_{\mu,r} u - v \|_{L^p(X,\mu)} = 0,
\]
in which case we say that $\Delta^{\dist}_{\mu}  u \coloneqq v$ is the $L^p$-$\AMV$ Laplacian of $u$.\\

\item We say that $u$ admits an \emph{$L^p$-SAMV Laplacian} if there exists a function $v \in L^p(X,\mu)$ such that 
\[
\lim\limits_{r \downarrow 0} \|\tilde{\Delta}^{\dist}_{\mu,r} u - v \|_{L^p(X,\mu)} = 0,
\]
in which case we say that $\tilde{\Delta}^{\dist}_{\mu}  u \coloneqq v$ is the $L^p$-$\SAMV$ Laplacian of $u$.
\end{enumerate}
\end{D}

\begin{rem}\label{rk:sym}
Let $u,v \in L^2(X,\mu)$ be both admitting an $L^2$-$\SAMV$ Laplacian.   Since 
\[
\tilde{\Delta}^{\dist}_{\mu,r} u(x) = \int_X k_r(x,y) (u(y)-u(x))\di \mu(y)
\]
for any $r>0$ and $\mu$-a.e.~$x \in X$, where
\[
k_r(x,y) \df \frac{1}{2r^2} \un_{[0,r)}(\dist(x,y))  \left( \frac{1}{\mu(B_r(x))} + \frac{1}{\mu(B_r(y))}  \right),
\]
the Fubini theorem implies
\[
\int_X (\tilde{\Delta}^{\dist}_{\mu,r}u) v \di \mu = \int_X u(\tilde{\Delta}^{\dist}_{\mu,r}v) \di \mu,
\]
hence from letting $r \downarrow 0$ we obtain
\begin{equation}\label{eq:sym}
\int_X (\tilde{\Delta}^{\dist}_{\mu}u) v \di \mu = \int_X u(\tilde{\Delta}^{\dist}_{\mu}v) \di \mu.
\end{equation}
\end{rem}

Let us now provide a definition of $L^p_{\text{loc}}$-AMV Laplacian and $L^p_{\text{loc}}$-SAMV Laplacian. In this case we assume a locally uniform comparability condition.

\begin{D}[$L^p_{\text{loc}}$ $\AMV$/$\SAMV$ Laplacian]
Let $(X,\dist,\mu)$ be a metric measure space such that $\mu$ satisfies a locally uniform comparability condition. Consider $u \in L^1_{\text{loc}}(X,\mu)$ and $p\in [1,+\infty)$.
\begin{enumerate}
\item We say that $u$ admits an $L^p_{\text{loc}}$-\emph{AMV Laplacian} if there exists a function $v \in L^p_{\text{loc}}(X,\mu)$ such that for any $x \in X$ there exists a neighborhood $V$  of $x$ such that
\[
\lim\limits_{r \downarrow 0} \|\Delta^{\dist}_{\mu,r} u - v \|_{L^p(V,\mu)} = 0,
\]
in which case we say that $\Delta^{\dist}_{\mu}  u \coloneqq v$ is the $L^p_{\text{loc}}$-$\AMV$ Laplacian of $u$.\\

\item We say that $u$ admits an \emph{$L^p_{\text{loc}}$-SAMV Laplacian} if there exists a function $v \in L^p_{\text{loc}}(X,\mu)$ such that for any $x \in X$ there exists a neighborhood $V$ of $x$ such that
\[
\lim\limits_{r \downarrow 0} \|\tilde{\Delta}^{\dist}_{\mu,r} u - v \|_{L^p(V,\mu)} = 0,
\]
in which case we say that $\tilde{\Delta}^{\dist}_{\mu}  u \coloneqq v$ is the $L^p_{\text{loc}}$-$\SAMV$ Laplacian of $u$.
\end{enumerate}
\end{D}

\begin{rem}
By Lemma \ref{lemma_averaging}, the locally uniform comparability condition guarantees that for any $x\in X$ there exist a neighborhood $V$ of $x$ and a constant $r_0$ such that $\Delta^{\dist}_{\mu,r} u \in L^p(V,\mu)$ for any $r \in (0,r_0)$. 
\end{rem}

\begin{rem}
The Hölder inequality implies that if $u$ admits an $L^p_{\text{loc}}$-$\AMV$ Laplacian, then $u$ admits an $L^{q}_{\text{loc}}$-$\AMV$ Laplacian for any $q \ge p$ which coincides $\mu$-a.e.~with the $L^p_{\text{loc}}$-$\AMV$ Laplacian. This remark holds in the $\SAMV$ case too.
\end{rem}

\hfill

\paragraph{\textbf{Weak AMV/SAMV Laplacian.}}  Let us now define a notion of weak $\AMV$ Laplacian and weak $\SAMV$ Laplacian. We focus on locally compact metric measure spaces $(X,\dist,\mu)$ such that $\mu$ satisfies a locally uniform comparability condition.  Under these assumptions,  Remark \ref{rem:locally_compact} implies that if $K$ is the support of a function $\phi \in C_c(X)$, then there exists $r_K>$ such that for any $r \in (0,r_K)$,
\[
\left| \int_K  \Delta^\dist_{\mu,r}\phi \di \mu \right| + \left| \int_K  \tilde{\Delta}^\dist_{\mu,r}\phi \di \mu \right| < +\infty.
\]
Then the following definition is well-posed.

\begin{D}[Weak $\AMV$/$\SAMV$ Laplacian]\label{def:weak}
Let $(X,\dist,\mu)$ be a locally compact metric measure space such that $\mu$ satisfies a locally uniform comparability condition. Let $u \in L^1_{\text{loc}}(X,\mu)$ be given.
\begin{enumerate}
\item We say that $u$ admits a \emph{weak Asymptotic Mean Value Laplacian} (weak $\AMV$ Laplacian for short) if there exists a Radon measure $\bold{\Delta}^\dist_{\mu}u$ such that
\[
(\Delta^\dist_{\mu,r}  u) \,  \mu \weakto  \bold{\Delta}^\dist_{\mu} u
\]
as $r \downarrow 0$, in which case we say that $\bold{\Delta}^\dist_{\mu}u$ is the weak $\AMV$ Laplacian of $u$.
\item We say that $u$ admits a \emph{weak Symmetrized Asymptotic Mean Value Laplacian} (weak $\SAMV$ Laplacian for short) if there exists a Radon measure $\bold{\tilde{\Delta}}^\dist_{\mu}u$ such that
\[
(\tilde{\Delta}^\dist_{\mu,r} u ) \,  \mu \weakto \bold{\tilde{\Delta}}^\dist_{\mu}u
\]
as $r \downarrow 0$, in which case we say that $ \bold{\tilde{\Delta}}^\dist_{\mu}u$ is the weak $\SAMV$ Laplacian of $u$.
\end{enumerate}
\end{D}

\begin{rem}
	A corresponding definition for the distributional AMV/SAMV Laplacian can be made for smooth manifolds by considering weak convergence in the dual of $C^\infty_c(X)$ instead of the dual of $C_c(X)$.
\end{rem}

We point out that if $u$ admits a weak $\SAMV$ Laplacian and $\phi \in C_c(X)$ admits an $L^2$-$\SAMV$ Laplacian, it follows from \eqref{eq:sym} that
\[
\int_X \phi \di \bold{\tilde{\Delta}}^\dist_{\mu}u = \int_X (\tilde{\Delta}^\dist_{\mu,r} \phi) u \di \mu.
\]

The connection between the weak and $L_{\text{loc}}^{p}$ Laplacians is the following.

\begin{prop}
Let $(X,\dist,\mu)$ be a locally compact metric measure space such that $\mu$ satisfies a locally uniform comparability condition.  Assume that $u \in L^1_{\text{loc}}(X,\mu)$  admits an $L_{\text{loc}}^{p}$-$\AMV$ Laplacian $\Delta_{\mu}^{\dist}u$ for some $p\in[1,\infty]$. Then $u$ admits a weak $\AMV$  Laplacian which is absolutely continuous with respect to $\mu$ with Radon--Nikodym derivative equal to $\Delta_{\mu}^{\dist}u$. The same holds with $\AMV$ replaced by $\SAMV$.
\end{prop}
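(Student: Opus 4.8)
The plan is to show that if $v \df \Delta_\mu^\dist u$ is the $L^p_{\text{loc}}$-$\AMV$ Laplacian of $u$, then the measure $v\mu$ is a Radon measure on $X$ and $(\Delta^\dist_{\mu,r} u)\,\mu \weakto v\mu$ as $r\downarrow 0$, which by definition of the weak $\AMV$ Laplacian gives the claim with $\bold{\Delta}^\dist_\mu u = v\mu$. The uniqueness of the weak limit (the Riesz--Markov--Kakutani identification) then forces $\bold{\Delta}^\dist_\mu u$ to be exactly $v\mu$, hence absolutely continuous with respect to $\mu$ with Radon--Nikodym derivative $v$.

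First I would check that $v\mu$ is a genuine Radon measure: since $v \in L^p_{\text{loc}}(X,\mu) \subset L^1_{\text{loc}}(X,\mu)$ by the Hölder inequality (using local finiteness of $\mu$ and local compactness of $X$), the measure $v\mu$ is locally finite; it is automatically Borel, and inner regularity on a locally compact space follows from standard measure theory, so $v\mu \in \mathrm{Rad}(X)$. Next, fix $\phi \in C_c(X)$ with support $K$. I want to show
\[
\int_X \phi\, \Delta^\dist_{\mu,r} u \di \mu \longrightarrow \int_X \phi\, v \di \mu \qquad \text{as } r\downarrow 0.
\]
By Remark~\ref{rem:locally_compact}, the locally uniform comparability condition gives a compact neighborhood $V$ of $K$ and constants $r_V, C_V>0$ such that $\mu(B_r(x))\le C_V\mu(B_r(y))$ for $\mu$-a.e.\ $x\in V$, $\mu$-a.e.\ $y\in B_r(x)$, and all $r\in(0,r_V)$; shrinking further we may assume $B_{r_V}(K)\subset V$ so that for $r<r_V$ the function $\Delta^\dist_{\mu,r} u$ restricted to $K$ depends only on $u|_V$. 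For such $r$, using that $\Delta^\dist_{\mu}u$ is the $L^p_{\text{loc}}$-limit over the neighborhood (intersected with, or replaced by, $V$), the difference is controlled by Hölder:
\[
\left| \int_X \phi\,(\Delta^\dist_{\mu,r}u - v)\di\mu \right| \le \|\phi\|_{L^\infty} \int_{V} |\Delta^\dist_{\mu,r}u - v| \di \mu \le \|\phi\|_{L^\infty}\, \mu(V)^{1-1/p}\, \|\Delta^\dist_{\mu,r}u - v\|_{L^p(V,\mu)},
\]
which tends to $0$ by the definition of the $L^p_{\text{loc}}$-$\AMV$ Laplacian (with the convention that for $p=\infty$ one simply bounds by $\mu(V)\|\Delta^\dist_{\mu,r}u-v\|_{L^\infty(V,\mu)}$). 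This is precisely the weak convergence $(\Delta^\dist_{\mu,r}u)\mu \weakto v\mu$ against $C_c(X)$. The $\SAMV$ case is identical word for word, replacing $\Delta^\dist_{\mu,r}$ by $\tilde\Delta^\dist_{\mu,r}$ and invoking Lemma~\ref{lemma_averaging}(2) (respectively the bound preceding the Strong $\SAMV$ definition) in place of the plain bound on $A_r$, which is why the locally uniform comparability hypothesis is assumed throughout.

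The only mildly delicate point — and the main obstacle — is the bookkeeping around neighborhoods: the definition of the $L^p_{\text{loc}}$-$\AMV$ Laplacian only provides, for the given point, \emph{some} neighborhood on which $L^p$-convergence holds, whereas here I need convergence on a fixed compact set $K = \supp\phi$ that may not be contained in a single such neighborhood. This is resolved by a routine compactness argument: cover $K$ by finitely many such neighborhoods $V_1,\dots,V_N$, take $V = V_1\cup\dots\cup V_N$ (a relatively compact open set) and $r_0 = \min_i r_0(x_i)$; then $\|\Delta^\dist_{\mu,r}u - v\|_{L^p(V_i,\mu)}\to 0$ for each $i$ forces $\|\Delta^\dist_{\mu,r}u - v\|_{L^p(V,\mu)}\to 0$, since $L^p$ norms over a finite union are controlled by the sum (or max) of the norms over the pieces. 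With this in hand the displayed estimate above goes through verbatim, completing the proof.
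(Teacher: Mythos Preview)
Your argument is correct and follows essentially the same approach as the paper: apply H\"older on $\supp\phi$ to bound $\int_X \phi(\Delta^{\dist}_{\mu,r}u - v)\di\mu$ by an $L^p$-difference that vanishes as $r\downarrow 0$. You are in fact more careful than the paper on two points it leaves implicit---that $v\mu$ is Radon, and that the neighborhood-wise $L^p$-convergence in the definition upgrades to convergence on an arbitrary compact $K$ via a finite subcover---so your write-up is if anything an improvement in rigor.
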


\begin{proof}
Consider $\phi\in C_{c}(X)$ with support $K$. Let $q$ be the conjugate exponent of $p$, i.e.~$1/p+1/q=1$. Then for any sufficiently small $r>0$,
\begin{align*}
\left|\int_{X}\phi(\Delta_{\mu,r}^{\dist}u-\Delta_{\mu}^{\dist}u)\di \mu\right|	& \le\int_{K}\left|\phi\right|\left|\Delta_{\mu,r}^{\dist}u-\Delta_{\mu}^{\dist}u\right|\di \mu\\
& \le\left\Vert \phi\right\Vert _{L^{q}(K,\mu)}\left\Vert \Delta_{\mu,r}^{\dist}u-\Delta_{\mu}^{\dist}u\right\Vert _{L^{p}(K,\mu)}.
\end{align*}
The last term tends to zero as $r\downarrow 0$, hence $\lim_{r\downarrow 0}\int_{X}\phi\Delta_{\mu,r}^{\dist}u\,\di\mu=\int_{X}\phi\Delta_{\mu}^{\dist}u\,\di\mu$. The $\SAMV$ case is proved the same way.
\end{proof}

Some functions admit a distributional or weak $\AMV$ Laplacian but no $L_{\text{loc}}^{p}$-$\AMV$ Laplacian. Here is an example.

\begin{example}
Consider $(\mathbb{R},\dist_{\text{e}},\leb^1)$ and $u(x)=x/|x|$,
defined to be zero for $x=0$. Then an easy computation shows that
\[
\Delta_{\leb^1,r}^{\dist_{\text{e}}}u(x)=\begin{cases}
\frac{x-r\sgn x}{r^{3}}, & r>|x|,\\
0, & r\le|x|.
\end{cases}
\]
Let $V$ be any open neighborhood around the origin on which $v$ is $p$-integrable.
Then there exists an $\epsilon>0$ such that $(-\epsilon,\epsilon)\subseteq V$,
and we can assume without loss of generality that $3r<\epsilon$.
Then, for $p\in(1,+\infty)$,
\begin{align*}
\lVert\Delta_{\leb^1,r}^{\dist_{\text{e}}} u-v\rVert_{L^{p}(V)} & \ge\lVert\Delta_{\leb^1,r}^{\dist_{\text{e}}} u-v\rVert_{L^{p}([-\epsilon,\epsilon])}\\
 & \ge\lVert\Delta_{\leb^1,r}^{\dist_{\text{e}}} u\rVert_{L^{p}([-\epsilon,\epsilon])}-\lVert v\rVert_{L^{p}([-\epsilon,\epsilon])}\\
 & \ge\frac{1}{r^{3}}\left(\int_{2r}^{\epsilon}(x-r)^{p}dx\right)^{1/p}-\lVert v\rVert_{L^{p}([-\epsilon,\epsilon])}\\
 & \ge\frac{1}{r^{3}}\left((\epsilon-2r)(2r-r)^{p}\right)^{1/p}-\lVert v\rVert_{L^{p}([-\epsilon,\epsilon])}\\
 & \ge\frac{r^{1/p}\cdot r}{r^{3}}-\lVert v\rVert_{L^{p}([-\epsilon,\epsilon])}\to+\infty,\qquad r\downarrow 0.
\end{align*}
This shows that $u$ does not admit an $L_{\text{loc}}^{p}$-AMV Laplacian
for any $p\in(1,+\infty)$. This result trivially extends to
$p=\infty$. Instead, $u$ has a distributional $\AMV$ Laplacian which can be shown to be the distributional derivative of the Dirac delta at the origin divided by three \cite[p.~21]{MT}. 
\end{example}

\section{Equality between the $\AMV$ and $\SAMV$ Laplacians}

In this section, we study some contexts where the equality $\Delta^{\dist}_{\mu}=\tilde{\Delta}^{\dist}_{\mu}$ holds in a suitable sense. 

\hfill

\paragraph{\textbf{Topological groups.}}
We recall that a topological group is a set $G$ equipped with a group law $\cdot$ and a topology $\cT$ with respect to which the maps $(g,h) \mapsto g \cdot h$ and $g \mapsto g^{-1}$ are continuous.  A left-invariant distance $\dist$ on a topological group $(G,\cdot,\cT)$ is a distance on $G$ which induces the same topology as $\cT$ and such that
\[
\dist(g \cdot x, g \cdot y) = \dist(x,y)
\]
for any $g,x,y \in G$.  When a topological group $(G,\cdot,\cT)$ is locally compact, it admits a unique --- up to a positive multiplicative constant --- non-zero Borel measure $\mu$, called Haar measure, which is finite on compact sets, quasi-regular, and left-invariant in the sense that
\[
\mu(g\cdot A)=\mu(A)
\] for any $g \in G$ and $A \in \cT$.

\begin{prop}
Let $(G,\cdot,\cT,\mu)$ be a locally compact topological group equipped with a Haar measure $\mu$ and metrized by a left-invariant distance $\dist$. Then the following hold:
\begin{enumerate}
\item For any $u \in L^1_{\text{loc}}(G,\mu)$ and any $x \in \Leb(u)$,  one has that $\Delta^\dist_{\mu,r}u(x)$ converges if and only if $\tilde{\Delta}^\dist_{\mu,r} u(x)$ does,  in which case 
\begin{equation}\label{eq:1}
\Delta^{\dist}_{\mu}u(x)=\tilde{\Delta}^{\dist}_{\mu}u(x).
\end{equation}
\item For any $p \in [1,+\infty)$,  a function $u \in L^p(G,\mu)$ admits a $L^p_{\text{loc}}$-$\AMV$ Laplacian if and only if it admits a $L^p_{\text{loc}}$-$\SAMV$ Laplacian, in which case
\begin{equation}\label{eq:2}
\Delta^{\dist}_{\mu}u=\tilde{\Delta}^{\dist}_{\mu}u \qquad \text{$\mu$-a.e.}
\end{equation}
\end{enumerate}
\end{prop}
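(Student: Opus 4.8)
The plan is to exploit the interaction of left-invariance of the distance with left-invariance of the Haar measure to show that, in fact, the two approximating operators $\Delta^{\dist}_{\mu,r}$ and $\tilde{\Delta}^{\dist}_{\mu,r}$ are \emph{identical} for every fixed $r>0$, not merely in the limit. The statement then follows with essentially no further work.

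The first and only substantive step is to observe that balls of a fixed radius all have the same measure. Fix $x \in G$ and $r>0$. Since $\dist$ is left-invariant, for $z \in G$ we have $\dist(x, x\cdot z) = \dist(e,z)$, so the map $z \mapsto x \cdot z$ carries $B_r(e)$ bijectively onto $B_r(x)$; that is, $B_r(x) = x \cdot B_r(e)$. Since $B_r(e)$ is open, hence Borel, and $\mu$ is left-invariant on Borel sets, this gives $\mu(B_r(x)) = \mu(x \cdot B_r(e)) = \mu(B_r(e))$. Consequently the $r$-distortion vanishes identically: $\delta_r(x,y) = 1 - \mu(B_r(x))/\mu(B_r(y)) = 0$ for all $x,y \in G$, and in particular $\mu(B_r(x))/\mu(B_r(y)) = 1$.

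Plugging this into the definition of the symmetrized operator, for any $u \in L^1_{\text{loc}}(G,\mu)$, any $x \in \Leb(u)$ and any $r>0$,
\[
\tilde{\Delta}^{\dist}_{\mu,r}u(x) = \frac{1}{2r^2}\fint_{B_r(x)} (u(y)-u^*(x))\bigl(1 + 1\bigr)\di\mu(y) = \frac{1}{r^2}\fint_{B_r(x)} (u(y)-u^*(x))\di\mu(y) = \Delta^{\dist}_{\mu,r}u(x).
\]
Assertion (1) is then immediate: the limit defining $\Delta^{\dist}_{\mu}u(x)$ exists if and only if the one defining $\tilde{\Delta}^{\dist}_{\mu}u(x)$ does, and they agree. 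For assertion (2), first note that the identity $\mu(B_r(x)) = \mu(B_r(y))$ means $\mu$ satisfies a uniform comparability condition \eqref{3} with constant $C=1$ (for every $r_0$), hence in particular a locally uniform one, so the $L^p_{\text{loc}}$-$\AMV$ and $L^p_{\text{loc}}$-$\SAMV$ Laplacians are well-posed for $u \in L^p(G,\mu)$; since $\Delta^{\dist}_{\mu,r}u = \tilde{\Delta}^{\dist}_{\mu,r}u$ as elements of $L^p(V,\mu)$ for every admissible $V$ and every small $r$, the two notions of $L^p_{\text{loc}}$ convergence coincide, yielding \eqref{eq:2}.

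I do not anticipate any genuine obstacle here; the only points requiring a modicum of care are purely bookkeeping, namely that open balls are Borel so that the left-invariance of $\mu$ applies to them, and that one is consistently working with the chosen representative $u^*$ at Lebesgue points as in Definition \ref{def:AMV}. Everything else is a direct substitution.
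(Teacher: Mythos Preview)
Your proof is correct and follows essentially the same approach as the paper's: both arguments use left-invariance of $\dist$ and $\mu$ to show that all balls of a given radius have the same measure (you translate via $B_r(x)=x\cdot B_r(e)$, the paper via $y\cdot x^{-1}\cdot B_r(x)=B_r(y)$), whence $\Delta^{\dist}_{\mu,r}u=\tilde{\Delta}^{\dist}_{\mu,r}u$ identically for each $r$, and both conclusions follow immediately.
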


\begin{proof}
For any $x \in G$, $r>0$ and $y \in B_r(x)$, since $\dist$ is left-invariant,
\[
y\cdot x^{-1} \cdot B_r(x) = B_r(y),
\]
and since $\mu$ is left-invariant, this implies that
\begin{equation}\label{eq:trivial}
\mu(B_r(x)) = \mu(B_r(y)).
\end{equation}
Then for any $u \in L^1(G,\mu)$ and any $x \in \Leb(u)$, one has $\Delta^{\dist}_{\mu,r}u(x)=\tilde{\Delta}^{\dist}_{\mu,r}u(x)$ for any $r>0$,  from which \eqref{eq:1} follows as $r \downarrow 0$.  Moreover,  \eqref{eq:trivial} ensures that $\mu$ trivially satisfies a uniform comparability condition.
\end{proof}

\hfill

\paragraph{\textbf{Riemannian and sub-Riemannian manifolds}} We provide in the next proposition a general criterion to get that $\Delta^\dist_{\mu}$ and $\tilde{\Delta}^\dist_{\mu}$ coincide. We phrase this result in terms of the $r$-distortion function $\delta_r (x,y)$ introduced in Definition \ref{def:distortion}.

\begin{prop}\label{prop1}
Let $(X,\dist,\mu)$ be a metric measure space.  For any $x \in X$ and $r>0$, set
\begin{equation}\label{eq:deviation_infty}
\eps_x(r)\coloneqq \sup_{y \in B_r(x)} |\delta_r(x,y)|. 
\end{equation}
Let $u \in L^1_{\text{loc}}(X,\mu)$ be given. Then for any $x\in \Leb(u)$ such that
\begin{equation}\label{eq:condition}
\eps_x(r)=O(r^2),\qquad r \downarrow 0,
\end{equation}
it holds that
\begin{equation}\label{eq:concl}
\lim\limits_{r \downarrow 0} |\Delta^\dist_{\mu,r}u(x) - \tilde{\Delta}^\dist_{\mu,r} u(x)| = 0.
\end{equation}
In particular, if $x \in \Leb(u)$ satisfies \eqref{eq:condition}, then $\Delta^\dist_{\mu,r}u(x)$ converges if and only if $\tilde{\Delta}^\dist_{\mu,r} u(x)$ does,  in which case $\Delta^\dist_{\mu}u(x)= \tilde{\Delta}^\dist_{\mu} u(x)$.
\end{prop}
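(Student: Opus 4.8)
The plan is to show that the gap $\Delta^\dist_{\mu,r}u(x)-\tilde{\Delta}^\dist_{\mu,r}u(x)$ is bounded by $\tfrac{\eps_x(r)}{2r^2}$ times an averaged oscillation of $u$ around $x$, and then to conclude using the hypothesis $\eps_x(r)=O(r^2)$ together with the fact that $x\in\Leb(u)$.

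First I would rewrite the weight $1+\mu(B_r(x))/\mu(B_r(y))$ in terms of the $r$-distortion function. By Definition \ref{def:distortion}, $\delta_r(x,y)=1-\mu(B_r(x))/\mu(B_r(y))$, hence
\[
1+\frac{\mu(B_r(x))}{\mu(B_r(y))}=2-\delta_r(x,y),
\]
and therefore, for $x\in\Leb(u)$ and $r>0$,
\[
\tilde{\Delta}^\dist_{\mu,r}u(x)=\Delta^\dist_{\mu,r}u(x)-\frac{1}{2r^2}\fint_{B_r(x)}\big(u(y)-u^*(x)\big)\,\delta_r(x,y)\di\mu(y).
\]
Thus the difference of the two difference quotients equals exactly the last integral term.

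Next I would estimate this term. Taking absolute values and using $|\delta_r(x,y)|\le\eps_x(r)$ for every $y\in B_r(x)$ (which is \eqref{eq:deviation_infty}), one gets
\[
\big|\Delta^\dist_{\mu,r}u(x)-\tilde{\Delta}^\dist_{\mu,r}u(x)\big|\le\frac{\eps_x(r)}{2r^2}\fint_{B_r(x)}|u(y)-u^*(x)|\di\mu(y).
\]
By \eqref{eq:condition} there exist $C,r_0>0$ with $\eps_x(r)\le Cr^2$ for $r\in(0,r_0)$, so the right-hand side is at most $\tfrac{C}{2}\fint_{B_r(x)}|u(y)-u^*(x)|\di\mu(y)$, which tends to $0$ as $r\downarrow0$ by the very definition of Lebesgue point \eqref{eq:Lebesguepoints}. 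This establishes \eqref{eq:concl}. The final assertion is then immediate: \eqref{eq:concl} says $\Delta^\dist_{\mu,r}u(x)$ and $\tilde{\Delta}^\dist_{\mu,r}u(x)$ have the same limiting behavior as $r\downarrow0$, so one converges if and only if the other does and the limits coincide.

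There is no genuine obstacle in this argument; the only point requiring care is the algebraic identity $1+\mu(B_r(x))/\mu(B_r(y))=2-\delta_r(x,y)$, which isolates the distortion and reduces everything to a one-line bound followed by the Lebesgue point property. One may optionally record that the same computation gives the cleaner-looking identity $\tilde{\Delta}^\dist_{\mu,r}u(x)=\Delta^\dist_{\mu,r}u(x)-\tfrac12\,(\delta_r(x,\cdot)\text{-weighted average})$, which will be reused in later sections.
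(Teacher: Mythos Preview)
Your proposal is correct and follows essentially the same approach as the paper: you derive the identity $\Delta^\dist_{\mu,r}u(x)-\tilde{\Delta}^\dist_{\mu,r}u(x)=\frac{1}{2r^2}\fint_{B_r(x)}\delta_r(x,y)(u(y)-u^*(x))\di\mu(y)$, bound it by $\frac{\eps_x(r)}{2r^2}\fint_{B_r(x)}|u(y)-u^*(x)|\di\mu(y)$, and conclude via the Lebesgue point property, exactly as in the paper. Your explicit algebraic step $1+\mu(B_r(x))/\mu(B_r(y))=2-\delta_r(x,y)$ is a nice way to make the key identity transparent.
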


\begin{proof}
For any $x \in \Leb(u)$ and $r>0$, 
\begin{equation}\label{eq:distortion}
\Delta^\dist_{\mu,r}u(x) - \tilde{\Delta}^\dist_{\mu,r} u(x) = \frac{1}{2r^2} \fint_{B_r(x)} \delta_r(x,y) (u(y) - u^*(x)) \di \mu(y).
\end{equation}
Then
\[
|\Delta^\dist_{\mu,r}u(x) - \tilde{\Delta}^\dist_{\mu,r} u(x)| \le  \frac{\eps_x(r)}{2r^2}  \fint_{B_r(x)} |u(y)-u^*(x)| \di \mu(y).
\] The conclusion follows since $\fint_{B_r(x)} |u(y)-u^*(x)| \di \mu(y) \to 0$ as $r \downarrow 0$. 
\end{proof}

Thanks to Proposition \ref{prop1} we can immediately show that $\Delta^\dist_{\mu}$ and $\tilde{\Delta}^\dist_{\mu}$ coincide on regular enough Riemannian manifolds. Indeed, let $M$ be a smooth $n$-dimensional manifold equipped with a $C^2$ Riemannian metric.  Let $\mu$ be the associated Riemannian volume measure. As is well-known (see e.g.~\cite{GHL}),  the scalar curvature of $(M,g)$ is a continous function $S_g : M \to \setR$ such that
\begin{equation}\label{eq:scalar}
\frac{\mu(B_r(x))}{\omega_n r^n} = 1 - c_n S_g(x) r^2 + O(r^4) \quad \text{as $r \downarrow 0$}
\end{equation}
for any $x \in M$, where $c_n\df(n+2)^{-1}6^{-1}$ and $O(r^4)$ depends only on curvature terms of $g$ at $x$ --- in particular,  if $K$ is a compact subset of $M$, then $O(r^4)$ can be made independant of $x \in K$.

\begin{prop}\label{prop:Riem}
Let $M$ be a smooth manifold equipped with a $C^2$ Riemannian metric $g$. Let $\mu$ be the associated Riemannian volume measure. Let $u \in L^1_{\text{loc}}(M,\mu)$ be given. Then for $\mu$-a.e.~$x \in M$,
\[
\lim\limits_{r \downarrow 0} |\Delta_{\mu,r}^{\dist}u(x) - \tilde{\Delta}_{\mu,r}^{\dist} u(x)| = 0.
\]
In particular, for $\mu$-a.e.~$x \in M$,  one has that $\Delta^\dist_{\mu,r}u(x)$ converges if and only if $\tilde{\Delta}^\dist_{\mu,r} u(x)$ does,  in which case $\Delta^\dist_{\mu}u(x)= \tilde{\Delta}^\dist_{\mu} u(x)$.
\end{prop}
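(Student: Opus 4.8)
The plan is to apply Proposition \ref{prop1}, so the entire task reduces to verifying that for $\mu$-a.e.~$x \in M$ the distortion bound $\eps_x(r) = O(r^2)$ holds as $r \downarrow 0$; since $\mu$ is locally finite and $M$ is a manifold, it suffices to prove this for every $x$ in the interior of an arbitrary compact set $K \subset M$, and then cover $M$ by countably many such sets. Actually, because \eqref{eq:scalar} holds at \emph{every} point with the error term locally uniform, I expect to get the conclusion at \emph{every} $x \in M$, not just $\mu$-a.e., but the statement is phrased in the a.e.~form to match Proposition \ref{prop1}'s hypothesis $x \in \Leb(u)$, which is only guaranteed $\mu$-a.e.

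First I would fix a compact set $K$ and a point $x$ in its interior, and let $r$ be small enough that $B_r(y) \subset K'$ for all $y \in B_r(x)$, where $K'$ is a slightly larger compact neighborhood. For such $y$, applying \eqref{eq:scalar} at both $x$ and $y$ gives
\[
\mu(B_r(x)) = \omega_n r^n \big( 1 - c_n S_g(x) r^2 + O(r^4) \big), \qquad \mu(B_r(y)) = \omega_n r^n \big( 1 - c_n S_g(y) r^2 + O(r^4) \big),
\]
with the $O(r^4)$ terms uniform over $K'$. Taking the quotient and using that $S_g$ is continuous hence bounded on $K'$, one obtains
\[
\delta_r(x,y) = 1 - \frac{\mu(B_r(x))}{\mu(B_r(y))} = c_n\big( S_g(y) - S_g(x) \big) r^2 + O(r^4),
\]
where again the $O(r^4)$ is uniform in $y \in B_r(x)$ (this is the computation referenced in the remark following Definition \ref{def:distortion}). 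Since $S_g$ is continuous on the compact $K'$, it is bounded there, so $|S_g(y) - S_g(x)| \le 2\|S_g\|_{L^\infty(K')}$ uniformly, and therefore
\[
\eps_x(r) = \sup_{y \in B_r(x)} |\delta_r(x,y)| \le 2 c_n \|S_g\|_{L^\infty(K')} r^2 + O(r^4) = O(r^2) \qquad \text{as } r \downarrow 0.
\]

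Then, for $\mu$-a.e.~$x \in M$ we also have $x \in \Leb(u)$ by the Lebesgue differentiation theorem (the Riemannian volume measure is locally doubling, hence infinitesimally doubling, so item (2) of the Lebesgue points discussion applies), and Proposition \ref{prop1} immediately yields $\lim_{r \downarrow 0}|\Delta^\dist_{\mu,r}u(x) - \tilde{\Delta}^\dist_{\mu,r}u(x)| = 0$ together with the stated equivalence and equality $\Delta^\dist_\mu u(x) = \tilde\Delta^\dist_\mu u(x)$. The only mildly delicate point is the local uniformity of the remainder in the quotient expansion for $\delta_r$: one must be careful that when $y$ ranges over $B_r(x)$ the ball $B_r(y)$ still lies in a fixed compact set on which \eqref{eq:scalar} has a uniform error, but this is arranged by simply working inside $B_{2r_0}(x) \Subset M$ for $r_0$ small. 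I do not anticipate any real obstacle beyond this bookkeeping.
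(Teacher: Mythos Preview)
Your proposal is correct and follows essentially the same route as the paper's proof: expand $\mu(B_r(\cdot))$ via \eqref{eq:scalar} with a locally uniform $O(r^4)$ remainder, take the ratio to obtain $|\delta_r(x,y)| = c_n|S_g(y)-S_g(x)|\,r^2 + O(r^4)$, conclude $\eps_x(r)=O(r^2)$, and then invoke Proposition~\ref{prop1} together with the Lebesgue differentiation theorem (infinitesimal doubling) to handle the a.e.\ Lebesgue-point requirement. The only cosmetic differences are that the paper bounds $|S_g(y)-S_g(x)|$ by a modulus of continuity $\omega(r)$ (yielding the slightly stronger $\eps_x(r)=o(r^2)$, which is not needed), whereas you bound it by $2\|S_g\|_{L^\infty(K')}$; and the paper verifies infinitesimal doubling explicitly from \eqref{eq:scalar} rather than citing local doubling of the Riemannian volume.
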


\begin{proof}
Let $n$ be the dimension of $M$. For any $x \in M$ and $y \in B_r(x)$,  thanks to \eqref{eq:scalar} we find that
\begin{align*}
\frac{\mu(B_r(y))}{\mu(B_r(x))} & = \frac{1-c_nS_g(y)r^2 + O(r^4)}{1-c_nS_g(x)r^2 + O(r^4)}\\
& = (1-c_nS_g(y)r^2 + O(r^4))(1+c_nS_g(x)r^2 + O(r^4))\\
& = 1 - c_n(S_g(y)-S_g(x))r^2 + O(r^4)
\end{align*} 
as $r \downarrow 0$, where we choose $O(r^4)$ uniform over points in the closed ball $\overline{B}_1(x)$. Since $S_g$ is continuous it is uniformly contiuous on $\overline{B}_1(x)$ hence it admits a non-decreasing modulus of continuity $\omega$. Therefore,
\[
\frac{\eps_x(r)}{r^2} \le c_n \left( \sup_{y \in B_r(x)} |S_g(y)-S_g(x)| \right)  + O(r^2) \le  c_n \omega(r)  + O(r^2)
\]
as $r \downarrow 0$.  This shows that \eqref{eq:condition} holds at any $x \in M$.

Let us show now that $(M,g)$ satisfies the infinitesimally doubling condition \eqref{eq:infinitesimaldoubling}. Thanks to \eqref{eq:scalar}, for any $x \in M$,
\begin{align*}
\frac{\mu(B_{2r}(x))}{\mu(B_r(x))} & = 2^n \frac{\mu(B_{2r}(x))}{\omega_n (2r)^n}\frac{\omega_n r^n}{\mu(B_r(x))} =  2^n \,\frac{1 - c_n S_g(z) (2r)^2 + O(r^4)}{1 - c_n S_g(z) r^2 + O(r^4)}\,,
\end{align*}
as $r \downarrow 0$, where we choose $O(r^4)$ uniform over points in the closed ball $\overline{B}_1(x)$, hence
\[
\lim_{r \downarrow 0} \frac{\mu(B_{2r}(x))}{\mu(B_r(x))} = 2^n.
\]

From \eqref{eq:infinitesimaldoubling} we get that $\mu$-a.e.~point $x \in M$ is a Lebesgue point of $u$, hence the result follows from Proposition \ref{prop1}.
\end{proof}

Let us pass now to the context of sub-Riemannian manifolds. We recall that a sub-Riemannian structure on a manifold $M$ is a pair $(D, g)$ where $D$ is a bracket generating subbundle of the tangent bundle $TM$, i.e.~Lie brackets of vector fields tangent to $D$ span the full tangent bundle, and $g$ is a smooth metric defined on $D$. Such a structure yields a well-defined distance $\dist$ called the sub-Riemannian (or Carnot--Carath\'eodory) distance. More precisely, $\dist(p, q)$ is the infimum of the length of Lipschitz curves tangent to $D$ (also called horizontal curves) joining two points $p$ and $q$. Here the length of the curve is computed with respect to the metric $g$.  We refer to \cite{AgrachevBarilariBoscain} for a complete introduction to sub-Riemannian geometry.

In this context,  the analysis made in \cite{BBL} yields the following.

\begin{prop}\label{prop:subRiem}
Let $(M,D,g)$ be a $3$-dimensional, contact sub-Riemannian manifold equipped with the Popp volume $\mu$. Let $u \in L^1_{\text{loc}}(M,\mu)$ be given. Then for $\mu$-a.e.~$x \in M$,  it holds that
\[
\lim\limits_{r \downarrow 0} |\Delta_{\mu,r}^{\dist}u(x) - \tilde{\Delta}_{\mu,r}^{\dist} u(x)| = 0.
\]
In particular, for $\mu$-a.e.~$x \in M$,  one has that $\Delta^\dist_{\mu,r}u(x)$ converges if and only if $\tilde{\Delta}^\dist_{\mu,r} u(x)$ does,  in which case $\Delta^\dist_{\mu}u(x)= \tilde{\Delta}^\dist_{\mu} u(x)$.
\end{prop}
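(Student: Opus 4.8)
The plan is to mimic the proof of Proposition~\ref{prop:Riem}, replacing the Riemannian volume expansion \eqref{eq:scalar} by the corresponding small-ball volume asymptotics for $3$-dimensional contact sub-Riemannian manifolds with the Popp volume, as established by Barilari, Beschastnyi and Lerario in \cite{BBL}. Concretely, their analysis provides an expansion of the form
\[
\frac{\mu(B_r(x))}{r^4} = \omega + a(x) r^2 + o(r^2) \quad \text{as $r \downarrow 0$,}
\]
where $\omega>0$ is a universal constant (the measure of the unit sub-Riemannian ball in the Heisenberg group, the nilpotent approximation) and $a(x)$ depends only on the local sub-Riemannian invariants at $x$ (the curvature-type invariants $\chi$ and $\kappa$ in the notation of \cite{BBL}), which are continuous in $x$; moreover the remainder $o(r^2)$ can be taken locally uniform in $x$, i.e.\ uniform over points in a compact neighborhood. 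The Hausdorff dimension here is $Q=4$, which is why $r^4$ rather than $r^n$ appears.

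First I would record this expansion and note that it implies, exactly as in the Riemannian case, that for any $x \in M$ and $y \in B_r(x)$,
\[
\frac{\mu(B_r(y))}{\mu(B_r(x))} = \frac{\omega + a(y) r^2 + o(r^2)}{\omega + a(x) r^2 + o(r^2)} = 1 + \frac{a(y)-a(x)}{\omega}\, r^2 + o(r^2)
\]
as $r \downarrow 0$, where the $o(r^2)$ is uniform over $y$ in a fixed compact neighborhood of $x$ (say $\overline{B}_1(x)$, which is compact since the sub-Riemannian distance induces the manifold topology and small closed balls are compact). Since $a$ is continuous, it is uniformly continuous on $\overline{B}_1(x)$ with some non-decreasing modulus of continuity $\omega_a$, whence
\[
\frac{\eps_x(r)}{r^2} \le \frac{1}{\omega}\sup_{y \in B_r(x)} |a(y)-a(x)| + o(1) \le \frac{\omega_a(r)}{\omega} + o(1) \longrightarrow 0
\]
as $r \downarrow 0$. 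This verifies condition \eqref{eq:condition} at every $x \in M$. Second, I would check the infinitesimal doubling condition \eqref{eq:infinitesimaldoubling}: from the same expansion, $\mu(B_{2r}(x))/\mu(B_r(x)) = 2^4 (\omega + a(x)(2r)^2 + o(r^2))/(\omega + a(x) r^2 + o(r^2)) \to 2^4$ as $r \downarrow 0$ for every $x$, so $(M,\dist,\mu)$ is infinitesimally doubling; by the Lebesgue differentiation theorem quoted in the preliminaries, $\mu$-a.e.\ point of $M$ is a Lebesgue point of $u$. The conclusion then follows immediately from Proposition~\ref{prop1} applied at $\mu$-a.e.\ $x$.

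The main obstacle is purely bibliographic rather than conceptual: one must extract from \cite{BBL} the precise form of the volume expansion together with the local uniformity of the remainder in $x$ and the continuity of the second-order coefficient $a(\cdot)$. The geometric content — that the leading term is the universal Heisenberg-ball volume and the next term is governed by continuous local invariants — is exactly what \cite{BBL} provides, so once that is cited correctly the rest is a verbatim repetition of the Riemannian argument. I would also remark (as the authors do in the surrounding text) that for general sub-Riemannian manifolds the analogous expansion is not available, which is why the statement is restricted to the $3$-dimensional contact case.
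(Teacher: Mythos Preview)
Your proposal is correct and follows essentially the same approach as the paper: the paper cites from \cite{BBL} the expansion $\mu(B_r(x))/(c_o r^4) = 1 - c_1 \kappa(x) r^2 + O(r^3)$ with $\kappa$ smooth in $x$, and then simply states that ``the proof of Proposition~\ref{prop:Riem} carries over,'' which is exactly what you have written out in detail. The only minor discrepancy is bibliographic --- the paper records the second-order coefficient as $-c_1\kappa(x)$ with an $O(r^3)$ remainder rather than your $a(x)$ with $o(r^2)$ --- but this makes no difference to the argument.
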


\begin{proof}
From \cite[Theorem 1]{BBL} we know the following for any $x \in M$,
\begin{equation}\label{eq:subriem}
\frac{\mu(B_r(x))}{c_o r^4} = 1 - c_1 \kappa(x) r^2 + O(r^3) \quad \text{as $r \downarrow 0$},
\end{equation}
where $c_o$ and $c_1$ are positive constants and $\kappa$ depends smoothly of $x$. Then the proof of Proposition~\ref{prop:Riem} carries over and yields the result.
\end{proof}
 
\hfill

\paragraph{\textbf{Spaces with suitably vanishing distortion}} 

We investigate now the case of spaces where the distortion functions $\delta_r$ satisfy more subtle assumptions. We begin with the following elementary lemma. Recall that $z_r(x)$ is defined in Definition \ref{def:distortion} as the average of $|\delta_r(x,y)|$.

\begin{lemma}
Let $(X,\dist,\mu)$ be a locally compact metric measure space. Then for any $u \in \Lip(X)$, $\phi \in C_c(X)$, and $r>0$,
\[
\left| \int_X \phi (\Delta^{\dist}_{\mu,r}u-\tilde{\Delta}^{\dist}_{\mu,r} u) \di \mu\right|  \le \frac{\Lip(u)}{2}\int_X |\phi| \frac{z_r}{r}  \di \mu.
\]
\end{lemma}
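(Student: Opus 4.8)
The plan is to start from the pointwise identity \eqref{eq:distortion} already recorded in the proof of Proposition~\ref{prop1}, namely
\[
\Delta^\dist_{\mu,r}u(x) - \tilde{\Delta}^\dist_{\mu,r} u(x) = \frac{1}{2r^2} \fint_{B_r(x)} \delta_r(x,y) (u(y) - u^*(x)) \di \mu(y),
\]
which holds for every $x \in \Leb(u)$, hence (since $u$ is Lipschitz, so continuous, so $\Leb(u)=X$ and $u^*=u$) for every $x \in X$. First I would multiply by $\phi(x)$ and integrate against $\mu$, then take absolute values inside. Since $u \in \Lip(X)$ and $y \in B_r(x)$, the elementary estimate $|u(y)-u(x)| \le \Lip(u)\,\dist(x,y) \le \Lip(u)\, r$ bounds the increment, so that
\[
\left| \int_X \phi(x) \bigl(\Delta^{\dist}_{\mu,r}u(x)-\tilde{\Delta}^{\dist}_{\mu,r}u(x)\bigr)\di\mu(x)\right| \le \frac{\Lip(u)\, r}{2r^2} \int_X |\phi(x)| \fint_{B_r(x)} |\delta_r(x,y)| \di\mu(y)\, \di\mu(x).
\]

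Next I would recognize the inner average as exactly $z_r(x)$ by Definition~\ref{def:distortion}\eqref{22}, so the right-hand side becomes $\tfrac{\Lip(u)}{2r}\int_X |\phi|\, z_r \di\mu$, which is the claimed bound after writing $r/r^2 = 1/r$. The one technical point to address is that all the manipulations above — pulling $\phi$ inside, applying the triangle inequality pointwise and then integrating, and invoking Fubini implicitly through the notation $z_r$ — require the relevant integrals to be finite and the interchange of integration to be legitimate. This is where local compactness is used: $\phi \in C_c(X)$ has compact support $K$, and by Remark~\ref{rem:locally_compact} together with the well-posedness discussion preceding Definition~\ref{def:weak}, for $r$ small enough $\Delta^\dist_{\mu,r}u$ and $\tilde\Delta^\dist_{\mu,r}u$ are integrable on $K$; moreover $\delta_r(x,y) = 1 - \mu(B_r(x))/\mu(B_r(y))$ is a Borel function of $(x,y)$ and $|\delta_r(x,y)|\le 1$ on the set $\{\dist(x,y)<r\} \cap (K \times X)$ near $K$, so everything is dominated and Fubini–Tonelli applies on a neighborhood of $K$; outside $K$ the integrand vanishes because $\phi$ does.

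I do not anticipate a genuine obstacle here — the statement is essentially a bookkeeping consequence of \eqref{eq:distortion} plus the Lipschitz bound on $u$'s oscillation over a ball of radius $r$. If I had to name the subtlest step, it is making sure the displayed chain of inequalities is valid when $r$ is not yet assumed small: strictly speaking one should restrict attention to $r \in (0,r_K)$ as in the well-posedness remark, but since the inequality is between quantities that may a priori be $+\infty$ on both sides, it in fact holds trivially for larger $r$ as well, and I would simply phrase the proof for all $r>0$ with the understanding that the bound is only informative (and the left side finite) for small $r$. The proof is therefore short: invoke \eqref{eq:distortion}, multiply by $\phi$ and integrate, estimate $|u(y)-u(x)| \le \Lip(u) r$, use Fubini to swap the order of integration on the support of $\phi$, and identify the resulting inner integral as $z_r$.
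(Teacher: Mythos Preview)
Your approach is essentially identical to the paper's: start from \eqref{eq:distortion}, bound $|u(y)-u(x)|$ by $\Lip(u)\,r$ (the paper routes this through $\Lip(u)\,\dist(x,y)$ first, but that is cosmetic), and recognize the inner average as $z_r(x)$.

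One small slip in your integrability discussion: the claim that $|\delta_r(x,y)|\le 1$ on $\{\dist(x,y)<r\}\cap(K\times X)$ is false in general, since $\mu(B_r(x))/\mu(B_r(y))$ can be arbitrarily large. Fortunately this claim is also unnecessary: once you have passed to absolute values, the integrand is non-negative and Tonelli applies outright, with the inequality understood in $[0,+\infty]$. So your proof goes through; just drop the ``dominated'' justification and invoke Tonelli directly.
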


\begin{proof}
The result follows from a direct computation:
\begin{align*}
\left| \int_X \phi (\Delta^{\dist}_{\mu,r}u-\tilde{\Delta}^{\dist}_{\mu,r} u) \di \mu\right| & \le \frac{1}{2}\int_X |\phi(x)|  \fint_{B_r(x)} |\delta_r(x,y)| \frac{|u(y)-u(x)|}{r^2}\di \mu(y) \di \mu(x)\\
& \le \frac{\Lip(u)}{2}\int_X |\phi(x)|  \fint_{B_r(x)} |\delta_r(x,y)| \frac{\dist(x,y)}{r^2}\di \mu(y) \di \mu(x)\\
& \le \frac{\Lip(u)}{2}\int_X |\phi(x)|  \fint_{B_r(x)} \frac{ |\delta_r(x,y)| }{r}\di \mu(y) \di \mu(x)\\
& = \frac{\Lip(u)}{2}\int_X |\phi(x)| \frac{z_r(x)}{r}  \di \mu(x).
\end{align*}
\end{proof}
As an immediate corollary,  we obtain the following result by letting $r \downarrow 0$.

\begin{cor}\label{cor:main}
Let $(X,\dist,\mu)$ be a locally compact metric measure space such that $z_r \in L^1_\text{loc}(X,\mu)$ for all $r>0$ small enough. Assume that
\begin{equation}
\frac{z_r}{r}  \mu \weakto 0 \qquad \text{as $r\downarrow 0$.}
\end{equation}
Then $u \in \Lip(X)$ admits a weak $\AMV$ Laplacian if and only if $u$ admits a weak $\SAMV$ Laplacian, in which case $\bold{\Delta}^{\dist}_\mu u=\bold{\tilde{\Delta}}^{\dist}_\mu u$.
\end{cor}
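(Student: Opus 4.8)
The plan is to deduce this corollary directly from the preceding lemma, which bounds $\left| \int_X \phi (\Delta^{\dist}_{\mu,r}u-\tilde{\Delta}^{\dist}_{\mu,r} u) \di \mu\right|$ by $\frac{\Lip(u)}{2}\int_X |\phi| \frac{z_r}{r} \di \mu$ for every $u \in \Lip(X)$ and $\phi \in C_c(X)$. First I would fix $u \in \Lip(X)$ and an arbitrary test function $\phi \in C_c(X)$, and observe that $|\phi| \in C_c(X)$ as well, so the hypothesis $\frac{z_r}{r}\mu \weakto 0$ applies to $|\phi|$ and gives $\int_X |\phi| \frac{z_r}{r}\di\mu \to 0$ as $r \downarrow 0$. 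Combined with the lemma, this yields
\[
\lim_{r \downarrow 0} \int_X \phi\, (\Delta^{\dist}_{\mu,r}u)\di\mu - \int_X \phi\, (\tilde{\Delta}^{\dist}_{\mu,r}u)\di\mu = 0
\]
for every $\phi \in C_c(X)$; equivalently, the difference of Radon measures $(\Delta^{\dist}_{\mu,r}u)\mu - (\tilde{\Delta}^{\dist}_{\mu,r}u)\mu$ converges weakly to $0$ as $r \downarrow 0$.

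Next I would invoke linearity of the weak topology on $\mathrm{Rad}(X)$: if $u$ admits a weak $\AMV$ Laplacian, i.e.~$(\Delta^{\dist}_{\mu,r}u)\mu \weakto \bold{\Delta}^{\dist}_\mu u$, then adding the vanishing difference above gives $(\tilde{\Delta}^{\dist}_{\mu,r}u)\mu \weakto \bold{\Delta}^{\dist}_\mu u$ as well, so $u$ admits a weak $\SAMV$ Laplacian and it equals $\bold{\Delta}^{\dist}_\mu u$. The reverse implication is entirely symmetric. Here I should be mildly careful that the well-posedness hypotheses behind Definition~\ref{def:weak} are in force: the space is locally compact and, since $z_r \in L^1_{\loc}$, the quantities $\int_K \Delta^{\dist}_{\mu,r}\phi\,\di\mu$ appearing there make sense; for the $\SAMV$ side one uses that $\mu$ satisfies a locally uniform comparability condition. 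Strictly, the corollary as phrased presumes these standing assumptions from the weak-Laplacian paragraph, so I would either recall them explicitly or note that they are implicit.

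There is essentially no obstacle here — the corollary is a one-line consequence of the lemma plus the definition of weak convergence of measures against $C_c(X)$, so the only ``step'' is to write the limit cleanly. The one point deserving a sentence of care is that weak convergence is tested against all of $C_c(X)$, not just nonnegative functions, whereas the lemma is stated with $|\phi|$ on the right-hand side; this is harmless because for a general $\phi \in C_c(X)$ one simply applies the hypothesis to the nonnegative function $|\phi| \in C_c(X)$. With that observed, the proof is complete.

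\begin{proof}
Fix $u \in \Lip(X)$. For any $\phi \in C_c(X)$ we have $|\phi| \in C_c(X)$, so the assumption $\frac{z_r}{r}\mu \weakto 0$ gives $\int_X |\phi|\frac{z_r}{r}\di\mu \to 0$ as $r \downarrow 0$. By the previous lemma,
\[
\left| \int_X \phi\,(\Delta^{\dist}_{\mu,r}u - \tilde{\Delta}^{\dist}_{\mu,r}u)\di\mu \right| \le \frac{\Lip(u)}{2}\int_X |\phi|\frac{z_r}{r}\di\mu \xrightarrow[r \downarrow 0]{} 0.
\]
Hence $(\Delta^{\dist}_{\mu,r}u)\mu - (\tilde{\Delta}^{\dist}_{\mu,r}u)\mu \weakto 0$ in $\mathrm{Rad}(X)$. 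If $u$ admits a weak $\AMV$ Laplacian, i.e.~$(\Delta^{\dist}_{\mu,r}u)\mu \weakto \bold{\Delta}^{\dist}_\mu u$, then by linearity $(\tilde{\Delta}^{\dist}_{\mu,r}u)\mu \weakto \bold{\Delta}^{\dist}_\mu u$, so $u$ admits a weak $\SAMV$ Laplacian and $\bold{\tilde{\Delta}}^{\dist}_\mu u = \bold{\Delta}^{\dist}_\mu u$. The converse is proved in the same way by exchanging the roles of the two operators.
\end{proof}
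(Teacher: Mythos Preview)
Your proof is correct and follows exactly the approach of the paper, which simply states that the corollary is obtained from the preceding lemma by letting $r \downarrow 0$. Your added remark that one tests the hypothesis against $|\phi| \in C_c(X)$ and then uses linearity of weak convergence in $\mathrm{Rad}(X)$ spells out precisely what that one-line justification means.
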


\begin{rem}\label{rem:implies(1)}
If $\mu$ satisfies a locally uniform comparability condition, then $z_r \in L^1_\text{loc}(X,\mu)$. Indeed, this assumption and the local compactness of $(X,\dist)$ imply that there exist $r_0,C>0$ such that for any $x \in X$, $r \in (0,r_0)$ and any compact neighborhood $K$ of $x$,
\[
\int_K z_r \di \mu \le (1+C) \mu\left(\cup_{\bar{x} \in K}B_r(\bar{x})\right)<+\infty.
\]
\end{rem}

We are going to apply Corollary \ref{cor:main} in the context of locally Ahlfors regular spaces (see Example~\ref{ex:Ahlfors} for the definition).  Let $Q$ be a fixed positive number.   We set
\[
\omega_Q \df \frac{\pi^{Q/2}}{\Gamma(Q/2+1)}
\]
where $\Gamma$ is the classical Gamma function; if $Q$ is an integer $n$, then $\omega_Q$ coincides with the Lebesgue measure of the unit Euclidean ball in $\setR^n$.   For a locally Ahlfors $Q$-regular metric measure space $(X,\dist,\mu)$,  we define
\[
\theta_r (x) \df \frac{\mu(B_r(x))}{\omega_Q r^Q}
\]
for any $x \in X$ and $r>0$, and
\[
\mu_r = \frac{1-\theta_r}{r} \, \mu .
\]
Each $\mu_r$ is a signed Radon measure. We let 
\[
|\mu_r| = \frac{|1-\theta_r|}{r} \, \mu
\]
be the associated total variation measure. Here is the main result of this paragraph.

\begin{prop}\label{prop:extension_of_AKS}
Let $(X,\dist,\mu)$ be a locally compact, locally Ahlfors $Q$-regular metric measure space satisfying
\begin{equation}\label{eq:zero_total_variation}
 |\mu_r| \weakto 0 \qquad \text{as $r \downarrow 0$.}
\end{equation}
Then $u \in \Lip(X)$ admits a weak $\AMV$ Laplacian if and only if $u$ admits a weak $\SAMV$ Laplacian, in which case $\bold{\Delta}^{\dist}_\mu u=\bold{\tilde{\Delta}}^{\dist}_\mu u$.
\end{prop}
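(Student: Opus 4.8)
The plan is to deduce Proposition \ref{prop:extension_of_AKS} from Corollary \ref{cor:main} by showing that the hypothesis \eqref{eq:zero_total_variation}, namely $|\mu_r| \weakto 0$, implies the weak convergence $\tfrac{z_r}{r}\mu \weakto 0$ required by the corollary. So the task reduces to a pointwise (or rather integrated) comparison between $z_r(x) = \fint_{B_r(x)} |\delta_r(x,y)|\di\mu(y)$ and the density $\tfrac{|1-\theta_r|}{r}$ of $|\mu_r|$. First I would note that local Ahlfors $Q$-regularity gives, on any compact set $K$, constants $r_K, C_K$ with $C_K^{-1} r^Q \le \mu(B_r(x)) \le C_K r^Q$ for $x$ near $K$ and $r<r_K$, so that all the denominators $\mu(B_r(y))$ for $y\in B_r(x)$ are comparable to $r^Q$ up to a uniform constant; in particular $z_r\in L^1_{\text{loc}}$ by Remark \ref{rem:implies(1)}, so Corollary \ref{cor:main} is applicable.

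The key algebraic step is to bound $|\delta_r(x,y)|$ in terms of $|1-\theta_r(x)|$ and $|1-\theta_r(y)|$. Writing
\[
\delta_r(x,y) = 1 - \frac{\mu(B_r(x))}{\mu(B_r(y))} = \frac{\mu(B_r(y)) - \mu(B_r(x))}{\mu(B_r(y))} = \frac{\omega_Q r^Q}{\mu(B_r(y))}\big( \theta_r(y) - \theta_r(x) \big),
\]
and then $\theta_r(y) - \theta_r(x) = (1-\theta_r(x)) - (1-\theta_r(y))$, one gets
\[
|\delta_r(x,y)| \le \frac{\omega_Q r^Q}{\mu(B_r(y))}\Big( |1-\theta_r(x)| + |1-\theta_r(y)| \Big) \le C_K' \Big( |1-\theta_r(x)| + |1-\theta_r(y)| \Big)
\]
for $x,y$ near $K$ and $r$ small, using the lower Ahlfors bound on $\mu(B_r(y))$. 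Averaging over $y\in B_r(x)$ yields
\[
z_r(x) \le C_K'\, |1-\theta_r(x)| + C_K'\, \fint_{B_r(x)} |1-\theta_r(y)|\di\mu(y) = C_K'\, |1-\theta_r(x)| + C_K'\, A_r\!\big(|1-\theta_r|\big)(x).
\]

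Now fix $\phi \in C_c(X)$ with support $K$; I want to show $\int_X |\phi| \tfrac{z_r}{r}\di\mu \to 0$. Dividing the previous display by $r$ and integrating against $|\phi|$, the first term contributes $C_K'\int |\phi|\,\tfrac{|1-\theta_r|}{r}\di\mu = C_K'\int |\phi|\di|\mu_r| \to 0$ directly by \eqref{eq:zero_total_variation}. For the second term I would use the self-adjointness relation for the averaging operator: choosing a slightly larger compact neighborhood $K'$ of $K$ and $r$ small enough that $B_r(\supp\phi)\subset K'$, Ahlfors regularity makes $A_r$ essentially self-adjoint up to a bounded multiplicative constant (this is exactly the content of the adjoint computation $A_r^* \le C A_r$ in the Preliminaries, valid here on $K'$ by the local uniform comparability following from Ahlfors regularity), so
\[
\int_X |\phi|\, \tfrac{1}{r} A_r\!\big(|1-\theta_r|\big)\di\mu \le C \int_X A_r(|\phi|)\, \tfrac{|1-\theta_r|}{r}\di\mu \le C\, \|A_r(|\phi|)\|_{L^\infty}\, |\mu_r|(K') ,
\]
and $\|A_r(|\phi|)\|_{L^\infty}\le \|\phi\|_{L^\infty}$ while $|\mu_r|(K')\to 0$ since $|\mu_r|\weakto 0$ and one can dominate $|\mu_r|(K')$ by testing against a continuous cutoff $\ge \un_{K'}$. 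Hence $\int_X |\phi| \tfrac{z_r}{r}\di\mu \to 0$, i.e.\ $\tfrac{z_r}{r}\mu \weakto 0$, and Corollary \ref{cor:main} gives the conclusion.

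The main obstacle I anticipate is handling the "$A_r$ on the cutoff $\phi$" term cleanly: one must be careful that $z_r(x)$ involves averages of $|1-\theta_r(y)|$ over $y$ ranging in a set slightly larger than $\supp\phi$, so the relevant smallness is $|\mu_r|(K')\to 0$ for a fixed compact neighborhood $K'$, not merely weak-$*$ convergence tested against $\phi$ itself. This is where passing to a fixed compactly-supported continuous function $\psi$ with $\un_{K'}\le\psi\le 1$ and invoking $\int\psi\di|\mu_r|\to 0$ is needed; once this replacement is in place the rest is routine. A secondary bookkeeping point is that the constants $C_K, C_K'$ depend on the compact set, but since $\phi$ has fixed compact support this causes no trouble.
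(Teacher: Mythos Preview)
Your proposal is correct and follows essentially the same route as the paper: reduce to Corollary~\ref{cor:main} by bounding $|\delta_r(x,y)|$ via $|1-\theta_r(x)|+|1-\theta_r(y)|$ using local Ahlfors regularity, then handle the resulting $A_r(|1-\theta_r|)$ term by passing to the adjoint and dominating by $C\|\phi\|_{L^\infty}\int_{K'}\tfrac{|1-\theta_r|}{r}\di\mu$, finally tested against a continuous cutoff $\psi\ge \un_{K'}$. The only minor difference is that the paper bounds $A_r^*\phi$ by splitting it as $(A_r^*\phi-\phi)+\phi$ and invoking a separate technical lemma for $\|A_r^*\phi-\phi\|_{L^\infty}$, whereas you go directly through $A_r^*(|\phi|)\le C\,A_r(|\phi|)\le C\|\phi\|_{L^\infty}$ on $K'$; your version is slightly more streamlined but not a different argument.
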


To prove Proposition \ref{prop:extension_of_AKS}, we need the following lemma.

\begin{lemma}\label{lem:technical}
Let $(X,\dist,\mu)$ be a locally compact metric measure space satisfying a locally uniform comparability condition. For any compact set $K \subset X$,  there exist $r_K, C_K>0$ depending only on $K$, and another compact set $K' \subset X$ containing $K$ such that for any $f \in L^\infty_{\text{loc}}(X,\mu)$,
\begin{equation}\label{eq:A*-I}
\sup_{r \in (0,r_K)} \|A_r^*f - f \|_{L^\infty(K,\mu)} \le C_K \|f\|_{L^\infty(K',\mu)}.
\end{equation}
\end{lemma}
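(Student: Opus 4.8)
The plan is to obtain \eqref{eq:A*-I} as a direct consequence of the comparability estimate for measures of balls, once an appropriate fixed compact enlargement of $K$ has been chosen.

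First I would fix the radius $r_K$ and the compact set $K'$. Using local compactness, for each $x \in K$ I choose $\rho_x>0$ so that $\overline{B_{2\rho_x}(x)}$ is compact; since the balls $B_{\rho_x}(x)$ cover $K$, finitely many of them, say $B_{\rho_1}(x_1),\dots,B_{\rho_m}(x_m)$, already cover $K$. I would then set $r_K^0\df \min_{1\le i\le m}\rho_i$ and $K'\df\bigcup_{i=1}^m\overline{B_{2\rho_i}(x_i)}$, which is compact and contains $K$; if $x\in K$ and $0<r<r_K^0$, then $x\in B_{\rho_i}(x_i)$ for some $i$, so $B_r(x)\subseteq B_{r+\rho_i}(x_i)\subseteq B_{2\rho_i}(x_i)\subseteq K'$. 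Next I would invoke Remark \ref{rem:locally_compact} to produce $r_K^1,C_K>0$ such that $\mu(B_r(x))\le C_K\,\mu(B_r(y))$ for $\mu$-a.e.\ $x\in K$, every $r\in(0,r_K^1)$ and $\mu$-a.e.\ $y\in B_r(x)$, and finally take $r_K\df\min\{r_K^0,r_K^1\}$.

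The estimate itself is then immediate. Fix $f\in L^\infty_{\text{loc}}(X,\mu)$, $r\in(0,r_K)$, and a point $x\in K$ at which the comparability inequality holds. Since $B_r(x)\subseteq K'$ and $1/\mu(B_r(y))\le C_K/\mu(B_r(x))$ for $\mu$-a.e.\ $y\in B_r(x)$,
\[
|A_r^*f(x)|\le\int_{B_r(x)}\frac{|f(y)|}{\mu(B_r(y))}\di\mu(y)\le\|f\|_{L^\infty(K',\mu)}\int_{B_r(x)}\frac{\di\mu(y)}{\mu(B_r(y))}\le C_K\,\|f\|_{L^\infty(K',\mu)};
\]
in particular $A_r^*f(x)$ is well-defined and finite for $\mu$-a.e.\ $x\in K$. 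Combining this with $|f(x)|\le\|f\|_{L^\infty(K,\mu)}\le\|f\|_{L^\infty(K',\mu)}$ and the triangle inequality gives $|A_r^*f(x)-f(x)|\le(C_K+1)\|f\|_{L^\infty(K',\mu)}$ for $\mu$-a.e.\ $x\in K$; taking the essential supremum over $x\in K$ and then the supremum over $r\in(0,r_K)$ yields \eqref{eq:A*-I} after enlarging $C_K$ to $C_K+1$.

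I do not expect a genuine obstacle: the computation is essentially the one behind part (1) of Lemma \ref{lemma_averaging}. The only points that need a little care are the construction of a single compact set $K'$ containing every ball $B_r(x)$ with $x\in K$ and $r<r_K$ — so that the bound is uniform both in $r$ and in $f$ — and checking that the several ``$\mu$-a.e.'' qualifiers (from the comparability condition and from the a.e.\ definition of $A_r^*$) are mutually compatible, which causes no trouble since only $\mu$-null sets are ever discarded.
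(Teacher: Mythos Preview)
Your proposal is correct and follows essentially the same strategy as the paper: choose $r_K$ and a compact $K'\supset K$ so that $B_r(x)\subset K'$ for all $x\in K$ and $r<r_K$, then bound $|A_r^*f(x)-f(x)|$ pointwise using the comparability estimate $\int_{B_r(x)}\mu(B_r(y))^{-1}\di\mu(y)\le C_K$ together with $\|f\|_{L^\infty(K',\mu)}$. The only cosmetic difference is that the paper splits $A_r^*f(x)-f(x)$ by adding and subtracting $f(x)/\mu(B_r(y))$ (obtaining the constant $3C_K'+1$), whereas you apply the cruder triangle inequality $|A_r^*f(x)-f(x)|\le|A_r^*f(x)|+|f(x)|$, which is in fact slightly cleaner and yields the smaller constant $C_K+1$.
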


\begin{proof} Let $K \subset X$ be a compact set. The local compactness of $(X,\dist)$ ensures that there exist $r_K>0$ and another compact set $K' \subset X$ containing $K$ such that 
\[
\bigcup_{x \in K} B_{r_K}(x) \subset K'.
\]
Consider $f \in L^\infty_{\text{loc}}(X,\mu)$. For $\mu$-a.e.~$x\in K$ and any $r \in (0,r_K)$,
\begin{align*}
|A_r^* f(x) - f(x)| & = \left| \int_{B_r(x)} \left(\frac{f(y)}{\mu(B_r(y))}-\frac{f(x)}{\mu(B_r(x))}\right) \di \mu(y) \right|\\
& \le \left| \int_{B_r(x)} \frac{f(y)-f(x)}{\mu(B_r(y))} \di \mu(y) \right| + \left| \int_{B_r(x)} \left(\frac{f(x)}{\mu(B_r(y))}-\frac{f(x)}{\mu(B_r(x))}\right) \di \mu(y) \right|\\
& \le \int_{B_r(x)} \frac{|f(y)-f(x)|}{\mu(B_r(y))} \di \mu(y) + |f(x)| \left| \int_{B_r(x)}\frac{\di \mu(y)}{\mu(B_r(y))}-1\right|\\
& \le 2  \|f\|_{L^\infty(K',\mu)} \int_{B_r(x)} \frac{\di \mu(y)}{\mu(B_r(y))} + \|f\|_{L^\infty(K',\mu)} \left( \int_{B_r(x)} \frac{\di \mu(y)}{\mu(B_r(y))}+1\right)\\
& =  \|f\|_{L^\infty(K',\mu)}  \left( 3  \int_{B_r(x)} \frac{\di \mu(y)}{\mu(B_r(y))} + 1\right).
\end{align*}
By Remark \ref{rem:locally_compact}, the locally uniform comparability condition implies that
\[
\int_{B_r(x)} \frac{\di \mu(y)}{\mu(B_r(y))}  \le C'_K
\]
for some $C'_K>0$ depending only on $K$. We obtain the desired result by setting $C_K\df 3 C'_K +1$.

\end{proof}

We are now in a position to prove Proposition \ref{prop:extension_of_AKS}.

\begin{proof}[Proof of Proposition \ref{prop:extension_of_AKS}]
 By Example~\ref{ex:Ahlfors} our space fulfills a locally uniform comparability condition, and by Remark~\ref{rem:implies(1)} we infer that $z_r\in L^1_{\text{loc}}(X,\mu)$. Hence Corollary~\ref{cor:main} shows that we only need to prove that 
\[
\frac{z_r}{r} \mu \weakto 0
\]
as $r \downarrow 0$. Take $\phi \in C_c(X)$.  Up to decomposing $\phi$ as $\phi^+ - \phi^-$,  we do not lose any generality in assuming that $\phi$ is non-negative, what we do from now on.  Let $K$ be the support of $\phi$. Let $r_K, C_K$ and $K'$ be given by Lemma \ref{lem:technical}. Consider  $r \in (0,r_K)$.  Observe that if $x \in K$ and $y \in B_r(x)$, then
\begin{align*}
|\delta_r(x,y)| = \frac{|\mu(B_r(y))-\mu(B_r(x))|}{\mu(B_r(y))}  =  \frac{|\theta_r(y)-\theta_r(x)|}{\theta_r(y)} & \le \frac{1}{\theta_r(y)} (|\theta_r(y)-1| +  |1-\theta_r(x))|)\\
& \le \omega_Q^{-1} C_{K'} \bigg( |\theta_r(y) -1| +  |1-\theta_r(x)| \bigg)
\end{align*}
where we have multiplied and divided by $\omega_Q r^Q$ to get the second equality and we have used the local Ahlfors regularity property on $K'$ to get the last one. Then
\begin{align*}
\int_X \phi \, \frac{z_r}{r} \di \mu  & =   \int_X \phi(x) \fint_{B_r(x)} \frac{|\delta_r(x,y)|}{r} \di \mu(y) \di \mu(x)\\ & \le \omega_Q^{-1} C_{K'} \left( \int_X \phi(x) \fint_{B_r(x)} \frac{|\theta_r(y)-1|}{r} \di \mu(y) \di \mu(x) + \int_X \phi(x) \frac{|\theta_r(x)-1|}{r} \di \mu(x) \right)
\end{align*}
By \eqref{eq:zero_total_variation}, the second term on the right-hand side converges to $0$ when $r \downarrow 0$. Let us show that also the first one converges to 0.  We have
\begin{align*}
\int_X \phi(x) \fint_{B_r(x)} \frac{|\theta_r(y)-1|}{r} \di \mu(y) \di \mu(x) & = \int_X \phi(x) A_r \left[ \frac{|\theta_r(\cdot)-1|}{r} \right](x) \di \mu(x)\\
& = \int_X A_r^*\phi(x) \frac{|\theta_r(x)-1|}{r} \di \mu(x).
\end{align*}
Since $x \notin K'$ yields $B_r(x) \cap \supp \phi = \emptyset$ which in turn implies that $A_r^*\phi(x)=0$, we may replace the previous integral over $X$ with an integral over $K'$. Then
\begin{align*}
\int_X \phi(x) \fint_{B_r(x)} \frac{|\theta_r(y)-1|}{r} \di \mu(y) \di \mu(x)& \le \int_{K'} |A_r^*\phi(x)- \phi(x)| \frac{|\theta_r(x)-1|}{r} \di \mu(x) + \int_{K'} \phi \frac{|\theta_r-1|}{r} \di \mu \\
& \le C \|\phi\|_{L^\infty(X,\mu)}\int_{K'} \frac{|\theta_r-1|}{r} \di \mu\\
& \le C \|\phi\|_{L^\infty(X,\mu)}\int_{X} \psi \frac{|\theta_r-1|}{r} \di \mu
\end{align*}
where we have used \eqref{eq:A*-I} to get the second inequality and we have set $\psi(\cdot)\coloneqq (1-\dist(\cdot,K')/\rho)^{+}$ for $\rho>0$ small enough to ensure that $\psi \in C_c(X)$. By \eqref{eq:zero_total_variation},  we get
\[
\lim\limits_{ r\downarrow 0} \int_{X} \psi \frac{|\theta_r-1|}{r} \di \mu = 0.
\]

\end{proof}

\begin{rem}
In \cite{KLP}, the authors introduced the following definition: a metric measure space $(X,\dist,\mu)$ has \textit{vanishing metric measure boundary} if $\mu_r \weakto 0$ as $r \downarrow 0$. It would be interesting to study whether a result like Proposition \ref{prop:extension_of_AKS} may be obtained with \eqref{eq:zero_total_variation} replaced by this weaker assumption.
\end{rem}

\begin{rem}\label{rem:extension_of_AKS}
For $K \in \setR$ and $N \in [1,+\infty)$, an $\RCD(K,N)$ space is a proper (hence locally compact) metric measure space satisfying a synthetic notion of 
Ricci curvature bounded below by $K$ and dimension bounded above by $N$; we refer to \cite{AmbrosioICM,Gigli}, for instance, for a nice account on these spaces.  According to \cite{DePG}, an $\RCD(K,N)$ space $(X,\dist,\mu)$ is called non-collapsed if $N$ is an integer and $\mu = \haus^N$.  Non-collapsed $\RCD(K,N)$ spaces are locally Ahlfors $N$-regular; this is a consequence of \cite[Theorem 1.3]{DePG}. By \cite[Theorem 1.2]{BMS22} (see also \cite[Theorem 3.7]{AKS_2}),  any non-collapsed $\RCD(K,N)$ space with vanishing metric-measure boundary satisfies \eqref{eq:zero_total_variation}. Building upon this,  Adamowicz, Kijowski and Soultanis proved in \cite[Corollary 3.9]{AKS_2} that the weak $\AMV$ and $\SAMV$ Laplacians coincide on such a space. Our result provides the same conclusion in a setting where no curvature-dimension condition is assumed. In this regard, it would be worth investigating the validity of \eqref{eq:zero_total_variation} in contexts which are not $\RCD$, like sub-Riemannian or Finsler spaces.
\end{rem}

\section{Weighted Lebesgue measures}

In this section, we study the context of weighted Lebesgue measures where the two notions of pointwise Laplacian \emph{do not} coincide. We first focus on points where the weight is positive, before tackling points where the weight vanishes.

\subsection{Positive weights}

We begin this subsection with a straightforward result.

\begin{prop}\label{prop:positive_weights}
Let $\Omega \subset \setR^n$ be an open set equipped with the Euclidean distance $\dist_e$, and $\mu\df w \leb^n \measrestr \Omega$ where $w \in C^1(\Omega)$. Consider $(\Omega, \dist_e,\mu)$. Then for any $x \in \Omega$ such that $w(x)>0$, any $u  \in C^2(\Omega)$ admits a pointwise $\AMV$ Laplacian and a pointwise $\SAMV$ Laplacian at $x$, and
\begin{equation}\label{eq:first_equality}
\Delta_{\mu}^{\dist_{e}}u(x) = \frac{1}{2(n+2)}(\Delta u(x) + 2\langle\nabla \ln w,\nabla u\rangle(x)),
\end{equation}
\begin{equation}\label{eq:second_equality}
\tilde{\Delta}_{\mu}^{\dist_{e}}u(x) = \frac{1}{2(n+2)}(\Delta u(x) + \langle\nabla \ln w,\nabla u\rangle(x)) \, \cdot
\end{equation}
\end{prop}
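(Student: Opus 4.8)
The plan is to perform a second-order Taylor expansion of the integrands and then use the rotational symmetry of Euclidean balls, which enters only through the moment identities $\int_{B_r(x)}(y-x)_i\di y=0$ and $\int_{B_r(x)}(y-x)_i(y-x)_j\di y=\frac{\omega_n r^{n+2}}{n+2}\delta_{ij}$. Fix $x\in\Omega$ with $w(x)>0$; by continuity of $w$ there is $\rho>0$ with $\overline{B}_\rho(x)\subset\Omega$ and $w\ge w(x)/2$ on $B_\rho(x)$. Since $(\Omega,\dist_e)$ is locally compact and $u$ is continuous, $x\in\Leb(u)$ with $u^*(x)=u(x)$, so $\Delta^{\dist_e}_{\mu,r}u(x)$ and $\tilde\Delta^{\dist_e}_{\mu,r}u(x)$ are well defined for $r<\rho$. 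Because $u\in C^2(\Omega)$ and $w\in C^1(\Omega)$, Taylor's theorem together with the local moduli of continuity of $\nabla^2u$ and $\nabla w$ gives, \emph{uniformly} in $y\in B_r(x)$ as $r\downarrow 0$,
\[
u(y)-u(x)=\langle\nabla u(x),y-x\rangle+\tfrac12\langle y-x,\nabla^2u(x)(y-x)\rangle+o(r^2),\qquad w(y)=w(x)+\langle\nabla w(x),y-x\rangle+o(r).
\]

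Next I would record the volume asymptotics. Integrating the expansion of $w$ over $B_r(z)$ and dropping the vanishing first moment yields $\mu(B_r(z))=\omega_n r^n w(z)+o(r^{n+1})$, uniformly for $z$ near $x$; specializing this to $z=x$ and to $z=y\in B_r(x)$ (and expanding $w(y)$ around $x$ once more), and using $w(x)>0$ to keep the denominator bounded below, one gets
\[
\mu(B_r(x))=\omega_n r^n w(x)\,(1+o(1)),\qquad 1+\frac{\mu(B_r(x))}{\mu(B_r(y))}=2-\frac{\langle\nabla w(x),y-x\rangle}{w(x)}+o(r),
\]
the second relation holding uniformly in $y\in B_r(x)$.

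Then I would insert these into the two difference quotients. For $\Delta^{\dist_e}_{\mu,r}u(x)$, one expands $(u(y)-u(x))w(y)$ and integrates over $B_r(x)$: the odd-order part vanishes, the moment identities turn the quadratic part into $\bigl(\langle\nabla u(x),\nabla w(x)\rangle+\tfrac12 w(x)\Delta u(x)\bigr)\frac{\omega_n r^{n+2}}{n+2}$, and the uniform $o(r^2)$ remainder integrates to $o(r^{n+2})$; dividing by $r^2\mu(B_r(x))$ and letting $r\downarrow 0$ gives \eqref{eq:first_equality}. For $\tilde\Delta^{\dist_e}_{\mu,r}u(x)$, one expands $(u(y)-u(x))\bigl(1+\tfrac{\mu(B_r(x))}{\mu(B_r(y))}\bigr)w(y)$: the constant $2$ in the symmetrizing factor reproduces twice the previous integrand, while its linear correction $-\langle\nabla w(x),y-x\rangle/w(x)$, multiplied by the leading term $w(x)\langle\nabla u(x),y-x\rangle$ of $(u(y)-u(x))w(y)$, removes exactly one copy of the cross term $\langle\nabla u(x),y-x\rangle\langle\nabla w(x),y-x\rangle$; all other products are $o(r^2)$ or integrate to zero, so the quadratic part of the integral is $\bigl(\langle\nabla u(x),\nabla w(x)\rangle+w(x)\Delta u(x)\bigr)\frac{\omega_n r^{n+2}}{n+2}$, and dividing by $2r^2\mu(B_r(x))$ and letting $r\downarrow 0$ yields \eqref{eq:second_equality}. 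Existence of the pointwise $\AMV$ and $\SAMV$ Laplacians at $x$ is then automatic, since the computation exhibits the limits of the difference quotients.

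The only genuinely delicate point is the error bookkeeping: one must verify that each $o(\cdot)$ coming from the Taylor expansions of $u$, of $w$, and of the volume function $z\mapsto\mu(B_r(z))$ is \emph{uniform} in $y\in B_r(x)$, so that after integration over a set of measure of order $r^n$ it still contributes only $o(r^{n+2})$. This is where $u\in C^2$, $w\in C^1$, and $w(x)>0$ (used to bound $\mu(B_r(y))$ away from $0$ for all $y\in B_r(x)$) all enter; the remaining content is the standard Euclidean second-moment computation, which is why the statement is recorded as straightforward.
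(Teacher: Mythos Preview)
Your argument is correct. The handling of the $\AMV$ case is the same Taylor-plus-moment computation the paper cites from \cite{MT}. For the $\SAMV$ case, however, your route differs from the paper's. The paper writes
\[
\tilde{\Delta}_{\mu,r}^{\dist_e}u(x)=\tfrac12\bigl(\Delta_{\mu,r}^{\dist_e}u(x)+I(r)\bigr),\qquad I(r)=\frac{1}{r^2}\int_{B_r(x)}(u(y)-u(x))\frac{w(y)}{\mu(B_r(y))}\di y,
\]
and observes that $w(y)/\mu(B_r(y))=(\omega_n r^n)^{-1}(1+O(r))$ uniformly in $y\in B_r(x)$ (since $\leb^n(B_r(y))=\leb^n(B_r(x))$), so $I(r)$ is, up to an $O(r)$ factor, the \emph{unweighted} average $\Delta_{\leb^n,r}^{\dist_e}u(x)$ and hence converges to $\Delta u(x)/(2(n+2))$. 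Combining this with the already-known $\AMV$ limit immediately gives \eqref{eq:second_equality}; this also yields the decomposition $\tilde{\Delta}_\mu^{\dist_e}u=\tfrac12(\Delta_\mu^{\dist_e}u+\Delta_{\leb^n}^{\dist_e}u)$ exploited later in the paper. You instead expand the ratio $\mu(B_r(x))/\mu(B_r(y))$ to first order and track how its linear correction cancels one copy of the cross term $\langle\nabla u,\cdot\rangle\langle\nabla w,\cdot\rangle$. Your approach is self-contained and proves both formulae at once; the paper's decomposition is shorter for the $\SAMV$ part because it recycles the $\AMV$ formula and the classical unweighted result, and it makes the structural identity $\tilde{\Delta}_\mu=\tfrac12(\Delta_\mu+\Delta_{\leb^n})$ transparent.
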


\begin{proof}
The first equality was proved in \cite[Proposition 2.3]{MT}.  Let us prove the second equality following the same lines. Let $r_0>0$ be such that $B_{r_0}(x) \subset \Omega$ and $w(y) > w(x)/2$ for any $y \in B_{r_0}(x)$. Then for any $r \in (0,r_0)$,
\begin{align}\label{eq:pirate}
\tilde{\Delta}_{\mu,r}^{\dist_{e}}u(x) & = \frac{1}{2}\left(\Delta_{\mu,r}^{\dist_{e}}u(x)+I(r)\right).
\end{align}
where
\[
I(r)\df \frac{1}{r^{2}}\int_{B_{r}(x)}(u(y)-u(x))\frac{w(y)}{\mu(B_{r}(y))}\di y.
\]
We claim that
\begin{equation}\label{eq:claim_I(r)}
I(r) \to \frac{\Delta u(x)}{2(n+2)}\qquad \text{as $r\downarrow 0$}.
\end{equation}
Once this claim is proved,  \eqref{eq:second_equality} follows from letting $r \downarrow 0$ in \eqref{eq:pirate} and using \eqref{eq:first_equality}.  We now prove \eqref{eq:claim_I(r)}. Since $w$ is $C^1$, a first-order Taylor expansion shows that for any $0<r<r_0/2$, $y \in B_r(x)$, and $z \in B_r(y)$, 
\[
w(z) = w(y) + \langle R_y(z),y-z \rangle
\]
for some $R_y(z) \in \setR^n$ such that
\[
|R_y(z)| \le C \coloneqq \sup_{\xi \in \overline{B}_{r_0}(x)} |\nabla w (\xi)|.
\]
Then
\[\mu(B_{r}(y))=\int_{B_{r}(y)}w(z)\di z = \leb^{n}(B_{r}(y))\left( w(y)+ \fint_{B_r(y)}\langle R_y(z),y-z \rangle \di z \right)\]
so that 
\begin{equation}\label{eq:quotient}
\frac{w(y)}{\mu(B_{r}(y))} = \frac{1}{\leb^{n}(B_{r}(y))\left( 1+ \frac{1}{w(y)}\fint_{B_r(y)}\langle R_y(z),y-z \rangle \di z \right)}\, \cdot
\end{equation}
Now
\begin{equation}\label{eq:O(r)}
\left| \frac{1}{w(y)}\fint_{B_r(y)}\langle R_y(z),y-z \rangle \di z \right| \le \frac{2C }{w(x)} \, r = O(r)
\end{equation}
and $\leb^{n}(B_{r}(y))=\leb^{n}(B_{r}(x))$, hence we get
\[
I(r) = \frac{1}{r^{2}(1+O(r))}\fint_{B_{r}(x)}u(y)-u(x)\,\di y\to\frac{\Delta u(x)}{2(n+2)}\qquad \text{as $r\downarrow 0$}.\]
\end{proof}

\begin{rem}
Proposition \ref{prop:positive_weights} shows that if $w>0$ everywhere in $\Omega$, then $\tilde{\Delta}_{\mu}^{\dist_{e}}$ coincides with a dimensional constant times the classical weighted Laplacian of $(\setR^n,\dist_e,w\leb^n)$ (also known as drifted Laplacian, or $f$-Laplacian, or Witten Laplacian,  see e.g.~\cite{munteanu2011smooth,colbois2015eigenvalues} and the references therein), namely
\[
\Delta u + \langle\nabla \ln w,\nabla u\rangle.
\]
Moreover, working in exponential coordinates, it is not difficult to show that \eqref{eq:first_equality} and \eqref{eq:second_equality} extend to the setting of weighted Riemannian manifolds $(M,g,w\mu)$ where $\mu$ is the canonical Riemannian measure and $w : M \to (0,+\infty)$ is a smooth map.
\end{rem}

We now study the case where the Euclidean distance on $\Omega$ is replaced with a more general distance $\dist$. To this aim,  for any $x \in \Omega$ and $r>0$, we set
\[
M^{r}(x) \df \left( M_{ij}^{r}(x) \df \frac{1}{r^2} \fint_{B_r(x)}  (y-x)_i (y-x)_j \di y \right)_{1 \le i<j\le n} .
\]
In case $\dist$ is associated with a norm, the change of variable $\xi=(y-x)/r$ shows that the matrices $M^{r}(x)$ are all equal to the second-moment matrix
\[
M(0)  \df \left( M_{ij}(0) \df  \fint_{B_1(0)}  \xi_i \xi_j \di \xi \right)_{1 \le i<j\le n}.
\]
We will also work under the assumption that balls for $\dist$ are symmetric with respect to the vector space structure of $\setR^n$, in the sense that for any $x \in \Omega$ and $r>0$ such that $B_r(x) \subset \Omega$, for any $v \in \setR^n$,
\begin{equation}\label{eq:symmetricball}
x+v \in B_r(x) \iff x-v \in B_r(x).
\end{equation}
This is trivially satisfied when $\dist$ is associated with a norm. But it may fail in general.  For instance, set
\[
\dist(x,y)\df\begin{cases}
\lvert x-y\rvert, & x,y\le0,\\
\frac{1}{2}|x-y|, & x,y>0,\\
\frac{1}{2}y-x, & x\le0,\,y>0,\\
\frac{1}{2}x-y, & y\le0,\,x>0.
\end{cases}
\]
Then $\dist$ is a metric on $\setR$ such that  $\frac{1}{2}\dist_{\text{e}}\le \dist\le2\dist_{\text{e}}$ and $B_{r}(0)=(-r,2r)$ for any $r>0$. 

A large class of distances $\dist$ which satisfy \eqref{eq:symmetricball} without being associated with a norm is given by the following: For $\alpha=(\alpha_1,\ldots,\alpha_n) \in \{1,2\}^n$, set
\[
\Phi_\alpha (x) \df (x_1^{\alpha_1},\ldots,x_n^{\alpha_n})
\]
for any $x=(x_1,\ldots,x_n)\in (0,+\infty)^n$. Set
\[
\dist_\alpha (x,y) \df \|\Phi_\alpha(x)-\Phi_\alpha(y)\|
\]
for any $x,y \in (0,+\infty)^n$, where $\|\cdot\|$ is any $l^p$-norm on $\setR^n$. Then $\dist_\alpha$ is a distance on $(0,+\infty)^n$ which always satisfies \eqref{eq:symmetricball}; it is associated with a norm if and only if $\alpha_1 = \ldots = \alpha_n=1$, because otherwise homogeneity fails. 

Our next result makes use of $z^{\leb^{n}}_r$, i.e.~the average of the absolute value of the distortion function, defined for any $r>0$ by
\begin{equation}\label{eq:Lebesgue_distortion}
z^{\leb^n}_r(x) \coloneqq \fint_{B_r(x)}\left|1-\frac{\leb^n(B_{r}(x))}{\leb^n(B_{r}(y))}\right| \di y.
\end{equation}
for any $x \in \setR^n$.

\begin{prop}\label{prop:sym_Hausdorff_distortion}
Let $\Omega \subset \setR^n$ be open with respect to a distance $\dist$ satisfying \eqref{eq:symmetricball}.  For $w \in C^1(\Omega)$ set $\mu\df w \leb^n \measrestr \Omega$. Consider $(\Omega, \dist,\mu)$.  Let $x \in \Omega$ be such that $w(x)>0$. 
\begin{enumerate}
\item Assume that the limit $M(x)\df \lim_{r \downarrow 0^+} M^r(x)$ exists.  Then any $u  \in C^2(\Omega)$ admits a pointwise $\AMV$ Laplacian at $x$, and
\begin{equation}\label{eq:first_equality_2}
\Delta_{\mu}^{\dist}u(x)	=\frac{1}{2}\Tr(M(x)\nabla^2 u(x))+\frac{1}{w(x)}\langle\nabla w(x),M(x)\nabla u(x)\rangle.
\end{equation}
\item If in addition
\begin{equation}\label{eq:distortion_r}
z^{\leb^{n}}_r(x) =o(r)
\end{equation}
as $r \downarrow 0$, then $u$ admits a pointwise $\SAMV$ Laplacian at $x$, and
\begin{equation}\label{eq:second_equality_2}
\tilde{\Delta}_{\mu}^{\dist}u(x)	=\frac{1}{2w(x)}\Tr(M(x)\nabla(w\nabla u)(x)).
\end{equation}
\end{enumerate}
\end{prop}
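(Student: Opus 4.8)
The plan is to mimic the proof of Proposition~\ref{prop:positive_weights}, replacing the Euclidean-ball computations with the general second-moment matrix $M(x)$ and carefully tracking the Lebesgue distortion term under the hypothesis \eqref{eq:distortion_r}. For part~(1), I first fix $r_0>0$ with $B_{r_0}(x)\subset\Omega$ and $w>w(x)/2$ on $B_{r_0}(x)$. For $u\in C^2(\Omega)$ I write the second-order Taylor expansion $u(y)-u(x)=\langle\nabla u(x),y-x\rangle+\tfrac12\langle\nabla^2u(x)(y-x),y-x\rangle+o(|y-x|^2)$ and expand $w(y)=w(x)+\langle\nabla w(x),y-x\rangle+o(|y-x|)$. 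Substituting into $\Delta^{\dist}_{\mu,r}u(x)=\tfrac{1}{r^2}\fint_{B_r(x)}(u(y)-u(x))\,\di\mu(y)$ and using $\mu=w\leb^n$, the denominator $\mu(B_r(x))$ expands as $w(x)\leb^n(B_r(x))(1+O(r))$ by the same argument as in \eqref{eq:quotient}--\eqref{eq:O(r)}. Here the symmetry assumption \eqref{eq:symmetricball} is what kills the odd-order contributions: it forces $\fint_{B_r(x)}(y-x)_i\,\di y=0$, so the cross term $\langle\nabla u(x),\cdot\rangle\cdot w(x)$ contributes nothing, while the product of the two linear terms $\langle\nabla u(x),y-x\rangle\langle\nabla w(x),y-x\rangle$ survives and yields $\tfrac{1}{w(x)}\langle\nabla w(x),M^r(x)\nabla u(x)\rangle$, and the Hessian term against $w(x)$ gives $\tfrac12\Tr(M^r(x)\nabla^2u(x))$. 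Letting $r\downarrow0$ and using that $M^r(x)\to M(x)$ gives \eqref{eq:first_equality_2}; note $x\in\Leb(\mu)$ automatically since $\mu$ has continuous positive density near $x$, and $u$ is continuous so $u^*(x)=u(x)$.

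For part~(2), I use the identity \eqref{eq:pirate}-type decomposition that already appeared in the proof of Proposition~\ref{prop:positive_weights}: since
\[
\tilde{\Delta}^{\dist}_{\mu,r}u(x)=\frac12\Big(\Delta^{\dist}_{\mu,r}u(x)+I(r)\Big),\qquad I(r)\df\frac{1}{r^2}\int_{B_r(x)}(u(y)-u(x))\frac{w(y)}{\mu(B_r(y))}\,\di y,
\]
it suffices to identify $\lim_{r\downarrow0}I(r)$. Writing $w(y)/\mu(B_r(y))=1/(\leb^n(B_r(y))(1+O(r)))$ exactly as in \eqref{eq:quotient}--\eqref{eq:O(r)} — the $O(r)$ being uniform in $y\in B_r(x)$ — I get
\[
I(r)=\frac{1}{r^2(1+O(r))}\int_{B_r(x)}\frac{u(y)-u(x)}{\leb^n(B_r(y))}\,\di y
=\frac{1}{r^2(1+O(r))}\fint_{B_r(x)}(u(y)-u(x))\frac{\leb^n(B_r(x))}{\leb^n(B_r(y))}\,\di y.
\]
Now I split $\leb^n(B_r(x))/\leb^n(B_r(y))=1-(1-\leb^n(B_r(x))/\leb^n(B_r(y)))$; the main part $\tfrac{1}{r^2}\fint_{B_r(x)}(u(y)-u(x))\,\di y\to\tfrac12\Tr(M(x)\nabla^2u(x))$ by the same symmetry argument as in part~(1) applied with $w\equiv1$ (the linear term drops by \eqref{eq:symmetricball}), and the error part is controlled by
\[
\frac{1}{r^2}\left|\fint_{B_r(x)}(u(y)-u(x))\Big(1-\tfrac{\leb^n(B_r(x))}{\leb^n(B_r(y))}\Big)\di y\right|\le\frac{\Lip(u|_{B_{r_0}(x)})\,r}{r^2}\,z^{\leb^n}_r(x)=\frac{\Lip(u)}{r}\,z^{\leb^n}_r(x)\to0
\]
using $|u(y)-u(x)|\le\Lip(u)\dist(y,x)\le\Lip(u)r$ (on the bounded neighborhood $u$ is Lipschitz) and precisely the hypothesis \eqref{eq:distortion_r} that $z^{\leb^n}_r(x)=o(r)$. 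Hence $I(r)\to\tfrac12\Tr(M(x)\nabla^2u(x))$, and combining with \eqref{eq:first_equality_2} in the decomposition,
\[
\tilde{\Delta}^{\dist}_{\mu}u(x)=\frac12\Big(\tfrac12\Tr(M(x)\nabla^2u)+\tfrac{1}{w(x)}\langle\nabla w,M(x)\nabla u\rangle+\tfrac12\Tr(M(x)\nabla^2u)\Big)(x),
\]
which rearranges to $\tfrac{1}{2w(x)}\Tr(M(x)\nabla(w\nabla u)(x))$ after expanding $\nabla(w\nabla u)=w\nabla^2u+\nabla w\otimes\nabla u$ and noting $\Tr(M(x)(\nabla w\otimes\nabla u))=\langle\nabla w,M(x)\nabla u\rangle$ since $M(x)$ is symmetric; this gives \eqref{eq:second_equality_2}.

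The main obstacle is making the error estimates genuinely uniform: one must check that the $o(|y-x|^2)$ Taylor remainder for $u$ and the $O(r)$ term coming from $w$ in \eqref{eq:quotient} are uniform over all $y\in B_r(x)$ and all $z\in B_r(y)$ simultaneously, which requires shrinking to $B_{r_0/2}(x)$ (so that $B_r(y)\subset B_{r_0}(x)$) and invoking uniform continuity of $\nabla^2u$ and $\nabla w$ on the compact closure $\overline{B}_{r_0}(x)$, exactly as in \eqref{eq:O(r)}. A secondary subtlety is that one cannot pull $1/\leb^n(B_r(y))$ out of the integral (it genuinely depends on $y$ when $\dist$ is not norm-induced), which is the whole reason the distortion term $z^{\leb^n}_r$ appears and why \eqref{eq:distortion_r} is needed; without it the $\SAMV$ Laplacian need not equal the weighted Laplacian, as the norm-associated but inhomogeneous examples $\dist_\alpha$ illustrate.
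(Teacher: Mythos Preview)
Your proposal is correct and follows essentially the same approach as the paper: for part~(1) you expand $(u(y)-u(x))w(y)$ via Taylor, kill the first-order term using the ball symmetry \eqref{eq:symmetricball}, and pass to the limit in $M^r(x)$; for part~(2) you use the same decomposition $\tilde{\Delta}^{\dist}_{\mu,r}u(x)=\tfrac12(\Delta^{\dist}_{\mu,r}u(x)+I(r))$, reduce $I(r)$ via \eqref{eq:quotient}--\eqref{eq:O(r)} to the unweighted average plus a distortion error bounded by $\Lip(u)\,z^{\leb^n}_r(x)/r$, and invoke \eqref{eq:distortion_r}. One cosmetic slip in your closing commentary: the distances $\dist_\alpha$ are precisely \emph{not} norm-associated (that is the point of the example), so describing them as ``norm-associated but inhomogeneous'' is backwards---but this does not affect the argument.
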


\begin{proof} 
\textbf{Step 1.} We first prove \eqref{eq:first_equality_2}.  To this aim, we claim that for any $r >0$,
\begin{equation}\label{eq:claim_r}
\Delta_{\mu,r}^{\dist}u(x)= \frac{\leb^{n}(B_{r}(x))w(x)}{\mu(B_{r}(x))} \left( \frac{1}{2}\Tr\left(\nabla^{2}u(x)M^{r}(x)\right) + \frac{1}{w(x)} \langle M^{r}(x)\nabla u(x),\nabla w(x)\rangle \right) + O(r).
\end{equation}
Once this is proved, \eqref{eq:first_equality_2} follows immediately since $\mu(B_{r}(x))/\leb^{n}(B_{r}(x)) \to w(x)$ as $r \downarrow 0$.  Let us prove \eqref{eq:claim_r}. By a second-order Taylor expansion of $u$ and a first-order Taylor expansion of $w$,  for any $y \in B_r(x)$ we infer that
\[
(u(y)-u(x))w(y) = \sum_{i=1}^n [wu_i](x) (y-x)_i +  \sum_{i,j=1}^{n}\left[\frac{wu_{ij}}{2} + w_i u_j\right](x) \, (y-x)_i (y-x)_j + O(r^3)
\]
where $O(r^3)$ is independent of $y$ and where we have denoted by $u_i$ the partial derivatives of $u$, by $u_{ij}$ the second order partial derivatives of $u$, and similarly for $w$.  Integrate with respect to $y \in B_{r}(x)$ and then divide by $r^2 \mu(B_r(x))$.  Then the left-hand side becomes $\Delta_{\mu,r}^{\dist}u(x)$. Due to antisymmetry, Assumption \eqref{eq:symmetricball} implies that the first term on the right-hand side vanishes when integrating w.r.t.~$y$ over $B_{r}(x)$.  Then we get
\begin{align*}
\Delta_{\mu,r}^{\dist}u(x) & =  \sum_{i,j=1}^{n} \left[\frac{wu_{ij}}{2} + w_i u_j\right](x) \frac{1}{r^{2}\mu(B_{r}(x))} \int_{B_r(x)}\ \, (y-x)_i (y-x)_j  \di y + \frac{\leb^n(B_r(x))}{\mu(B_{r}(x))} O(r) \\
& = \frac{\leb^n(B_r(x))}{\mu(B_{r}(x))}   \sum_{i,j=1}^{n} \left[\frac{wu_{ij}}{2} + w_i u_j\right](x) M^r_{ij}(x) + O(r)\\
& =  \frac{\leb^{n}(B_{r}(x))}{\mu(B_{r}(x))} \left( \frac{w(x)}{2}\Tr\left(\nabla^{2}u(x)M^{r}(x)\right) + \langle M^{r}(x)\nabla u(x),\nabla w(x)\rangle \right) + O(r).
\end{align*}

\textbf{Step 2.} Now we prove \eqref{eq:second_equality_2}. Observe that
\begin{equation}\label{eq:pirate_2}
\tilde{\Delta}^\dist_{\mu,r}u(x)= \frac{1}{2}\left( \Delta^\dist_{\mu,r}u(x) +I(r) \right)
\end{equation}
where
\[
I(r)\df \frac{1}{r^{2}}\int_{B_{r}(x)}(u(y)-u(x))\frac{w(y)}{\mu(B_{r}(y))}  \di y.
\]
We claim that
\begin{equation}\label{eq:Tr}
I(r) \to  \Delta_{\leb^n}^\dist u(x) = \frac{1}{2}\Tr (M(x) \nabla^2u(x)).
\end{equation}
Once this claim is proved, letting $r \downarrow 0$ in \eqref{eq:pirate_2} yields \eqref{eq:second_equality_2}, since a direct computation gives
	\begin{equation}
\frac{1}{2w(x)}\Tr(M(x)\nabla(w\nabla u)(x))=	\frac{1}{2}\Tr(M(x)\nabla^{2}u(x))+\frac{1}{2w(x)}\langle M(x)\nabla u(x),\nabla w(x)\rangle.
	\end{equation}
	Thus we prove \eqref{eq:Tr}. Thanks to \eqref{eq:quotient} and \eqref{eq:O(r)}, we get
	\begin{align}\label{eq:I(r)}
	I(r) & =\frac{1}{r^{2}(1+O(r))}\int_{B_{r}(x)}(u(y)-u(x))\frac{1}{\leb^{n}(B_{r}(y))} \di y \nonumber \\
	& = \frac{1}{r^{2}(1+O(r))}\Bigg[\fint_{B_{r}(x)}(u(y)-u(x)) \di y \nonumber \\
	& \qquad-\fint_{B_{r}(x)}(u(y)-u(x))\left(1-\frac{\leb^{n}(B_{r}(x))}{\leb^{n}(B_{r}(y))}\right) \di y\Bigg] \nonumber\\
	& = \frac{1}{1+O(r)}\Bigg[\Delta_{\leb^n,r}^\dist u(x)-\frac{1}{r^2}\fint_{B_{r}(x)}(u(y)-u(x))\left(1-\frac{\leb^{n}(B_{r}(x))}{\leb^{n}(B_{r}(y))}\right) \di y\Bigg].
\end{align}	
Now using that $u$ is Lipschitz, we get
\begin{align*}
\frac{1}{r^2} \left| \fint_{B_{r}(x)}(u(y)-u(x))\left(1-\frac{\leb^{n}(B_{r}(x))}{\leb^{n}(B_{r}(y))}\right) \di y \right|  & \le \frac{\Lip(u)}{r^2}   \fint_{B_r(x)} \dist(x,y)\left|1-\frac{\leb^{n}(B_{r}(x))}{\leb^{n}(B_{r}(y))}\right|  \di y \\
& \le  \frac{\Lip(u)}{r}   \fint_{B_r(x)} \left|1-\frac{\leb^{n}(B_{r}(x))}{\leb^{n}(B_{r}(y))} \right| \di y\\
& =  \Lip(u) \frac{z^{\leb^{n}}_r(x)}{r}  \to 0 \qquad \text{as $r \downarrow 0$}
\end{align*}
by Assumption $\eqref{eq:distortion_r}$.	Then \eqref{eq:Tr} follows from letting $r\downarrow 0$ in \eqref{eq:I(r)}.
\end{proof}

\begin{rem}
It follows from the previous proof that 
\begin{equation}\label{eq:decompo}
\tilde{\Delta}_{\mu}^{\dist}u(x)  = \frac{1}{2} \left(\Delta_{\mu}^{\dist}u(x) + \Delta_{\leb^n}^{\dist}u(x)  \right).
\end{equation}
\end{rem}

\begin{rem}
Let $\Vert \cdot \Vert$ denote the infinity matrix norm. By the Jensen inequality,
\[
\left\| M^{r}(x)  \right\| \le  \fint_{B_r(x)} \left\| \left( \frac{(y-x)_i(y-x)_j}{r^2}\right)_{1 \le i \le j \le n} \right\| \di y \le 1
\]
for any $r>0$, so the sequence $\{M^{r}(x)\}_{r>0}$ is bounded and therefore it admits a set of accumulation points $\{M_\alpha\}$ as $r \downarrow 0$. This set is a singleton if and only if the limit $M(x)\df \lim_{r \downarrow 0} M^r(x)$ exists. If this is not the case, then $\Delta_{\mu}^{\dist}u(x)$ and $\tilde{\Delta}_{\mu}^{\dist}u(x)$ may be understood as multivalued:
\begin{align*}
\Delta_{\mu}^{\dist}u(x)	& =\left\{\frac{1}{2}\Tr(M_\alpha(x)\nabla^2 u(x))+\frac{1}{w(x)}\langle\nabla w(x),M_\alpha(x)\nabla u(x)\rangle\right\}, \\
\tilde{\Delta}_{\mu}^{\dist}u(x)	& =\left\{\frac{1}{2w(x)}\Tr(M_\alpha(x)\nabla(w\nabla u)(x))\right\}.
\end{align*}
\end{rem}

\subsection{Vanishing weights}

We are now interested in the pointwise $\AMV$ and $\SAMV$ Laplacians  at a point where the weight vanishes. With no loss of generality, we assume that this point is the origin $0_n$.  We let $\|\cdot\|$ be a norm on $\setR^n$.

\begin{D}
Let $w$ be a Lebesgue integrable function defined on a neighborhood $\Omega$ of $0_n$. We say that $w$ is a vanishing weight at $0_n$ if $w(x) > 0$ for $\leb^n$-a.e.~$x \in \Omega$ and $0_n \in \Leb(w)$ with $w^*(0_n)=0$.
\end{D}

For any Radon measure $\nu$ on $\setR^n$, we define the second-moment matrix of $\nu$ as
\begin{equation}\label{eq:second_moment_matrix}
M_\nu \df \left( \int_{B_1(0_n)} y_i y_j \di \nu(y) \right)_{1 \le i,j \le n}.
\end{equation}

\subsubsection{Infinitesimally even weights }

\begin{D}
We say that a weight $w$ is infinitesimally even at $0_n$ if it is $\leb^n$-essentially bounded in a neighborhood of $0_n$ and such that
\[
 \text{$\leb^n$-}\esssup\limits_{x \in B_r(0_n)} |w(x)-w(-x)|=o\left( r \fint_{B_r(0_n)} w \di \leb^n \right) \qquad \text{as $r \downarrow 0$.}
\]
\end{D}

Our first result is the following.

\begin{prop}\label{prop:as_even_weights}  Consider $(\Omega,\|\cdot\|,w \leb^n)$ where $w$ is a vanishing weight at $0_n$ of domain $\Omega$ which is infinitesimally even at $0_n$. Set $\mu \df w \leb^n$. 
Assume that the blow-up probability measures 
\[  \nu_r \df \left( \frac{w(r\, \cdot)r^n}{\mu(B_r(0_n))} \right) \leb^n \measrestr B_1(0_n)
\]
weakly converge to some Radon measure $\nu$ supported in $B_1(0_n)$.\footnote{observe that
\[
\frac{1}{r^2} \fint_{B_r(0_n)} x_ix_j \di \mu(x) = \int_{B_1(0_n)} y_i y_j \di \nu_r(y) 
\]}
Then the pointwise $\AMV$ Laplacian at $0_n$ of any function $u : \Omega \to \setR$ two times differentiable at $0_n$ exists and satisfies
\begin{equation}\label{eq:AMV_in_0}
\Delta^{\|\cdot\|}_{\mu}u(0_n)=\frac{1}{2} \Tr (M_\nu \nabla^2u)(0_n).
\end{equation}
\end{prop}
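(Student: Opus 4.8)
The plan is to substitute the second-order Taylor expansion of $u$ at $0_n$ into $\Delta^{\|\cdot\|}_{\mu,r}u(0_n)$ and track the three resulting contributions as $r\downarrow 0$. Two preliminary observations make the computation legitimate. Since $u$ is twice differentiable at $0_n$, it is continuous there, so for every $\eps>0$ one has $|u(y)-u(0_n)|\le\eps$ on a small enough ball around $0_n$; hence $\fint_{B_r(0_n)}|u(y)-u(0_n)|\,\di\mu(y)\to 0$ as $r\downarrow 0$, i.e.\ $0_n\in\Leb(u)$ with $u^*(0_n)=u(0_n)$. Moreover the expansion
\[
u(y)=u(0_n)+\langle\nabla u(0_n),y\rangle+\tfrac12\langle\nabla^2u(0_n)y,y\rangle+R(y),\qquad R(y)=o(|y|^2)\ \text{as }y\to0_n,
\]
shows that $u$ is bounded near $0_n$, so the averages defining $\Delta^{\|\cdot\|}_{\mu,r}u(0_n)$ are finite for small $r$. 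Inserting this expansion together with $u^*(0_n)=u(0_n)$ gives $\Delta^{\|\cdot\|}_{\mu,r}u(0_n)=L(r)+Q(r)+E(r)$, where $L(r)$, $Q(r)$, $E(r)$ are the $r^{-2}$-rescaled averages over $B_r(0_n)$ of the linear, the Hessian, and the remainder term of the expansion, respectively.

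The remainder term is negligible: set $C_0\df\sup\{|z|:\|z\|\le1\}$, so that $|y|\le C_0 r$ on $B_r(0_n)$; given $\eps>0$ and $\delta>0$ with $|R(y)|\le\eps|y|^2$ for $|y|\le\delta$, once $C_0 r\le\delta$ we get $|R(y)|\le\eps C_0^2 r^2$ on $B_r(0_n)$, whence $|E(r)|\le\eps C_0^2$, so $E(r)\to0$. The Hessian term is handled via the change of variables recorded in the footnote of the statement: writing $\langle\nabla^2u(0_n)y,y\rangle=\sum_{i,j}[\nabla^2u(0_n)]_{ij}\,y_iy_j$, one gets
\[
Q(r)=\tfrac12\sum_{i,j}[\nabla^2u(0_n)]_{ij}\,\frac{1}{r^2}\fint_{B_r(0_n)}y_iy_j\,\di\mu(y)=\tfrac12\sum_{i,j}[\nabla^2u(0_n)]_{ij}\int_{B_1(0_n)}\xi_i\xi_j\,\di\nu_r(\xi).
\]
Since every $\nu_r$ is a probability measure carried by the fixed compact set $\overline{B_1(0_n)}$, the assumed convergence $\nu_r\weakto\nu$ (tested against $C_c$) forces $\nu$ to be a probability measure supported in $\overline{B_1(0_n)}$ and, by tightness, upgrades to convergence tested against all of $C(\overline{B_1(0_n)})$, in particular against the maps $\xi\mapsto\xi_i\xi_j$. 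Hence $Q(r)\to\tfrac12\sum_{i,j}[\nabla^2u(0_n)]_{ij}(M_\nu)_{ij}=\tfrac12\Tr(M_\nu\nabla^2u(0_n))$.

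The crux is the linear term
\[
L(r)=\frac{1}{r^2\mu(B_r(0_n))}\int_{B_r(0_n)}\langle\nabla u(0_n),y\rangle\,w(y)\,\di y,
\]
and this is the one place where infinitesimal evenness is used. Because $\|\cdot\|$ is a norm, $B_r(0_n)$ is invariant under $y\mapsto-y$, while $y\mapsto\langle\nabla u(0_n),y\rangle$ is odd; substituting $y\mapsto-y$ over half of the domain yields
\[
\int_{B_r(0_n)}\langle\nabla u(0_n),y\rangle\,w(y)\,\di y=\tfrac12\int_{B_r(0_n)}\langle\nabla u(0_n),y\rangle\,\bigl(w(y)-w(-y)\bigr)\,\di y,
\]
so that, denoting by $\|\cdot\|_*$ the dual norm and using $\|y\|<r$ on $B_r(0_n)$,
\[
|L(r)|\le\frac{\|\nabla u(0_n)\|_*}{2\,r\,\mu(B_r(0_n))}\int_{B_r(0_n)}|w(y)-w(-y)|\,\di y\le\frac{\|\nabla u(0_n)\|_*}{2}\cdot\frac{\leb^n(B_r(0_n))}{r\,\mu(B_r(0_n))}\,\esssup_{B_r(0_n)}|w(y)-w(-y)|.
\]
Since $r\fint_{B_r(0_n)}w\,\di\leb^n=r\,\mu(B_r(0_n))/\leb^n(B_r(0_n))$, the infinitesimal evenness hypothesis asserts precisely that the product of the last two factors is $o(1)$, hence $L(r)\to0$. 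Combining the three limits gives $\Delta^{\|\cdot\|}_{\mu,r}u(0_n)=L(r)+Q(r)+E(r)\to\tfrac12\Tr(M_\nu\nabla^2u(0_n))$, which is \eqref{eq:AMV_in_0}. I expect the estimate on $L(r)$ to be the main point: the rest is routine second-order expansion, but without the evenness assumption there is no reason for the first moment $\int_{B_r(0_n)}y\,w(y)\,\di y$ to be $o(r^2\mu(B_r(0_n)))$, so this is exactly where the hypothesis has to be exploited; a secondary point deserving a line is the passage from $C_c$- to $C(\overline{B_1(0_n)})$-testing for the $\nu_r$, which is immediate from their uniform support in a compact set.
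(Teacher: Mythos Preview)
Your proof is correct and follows essentially the same strategy as the paper's: Taylor-expand $u$ at $0_n$, split $\Delta^{\|\cdot\|}_{\mu,r}u(0_n)$ into the linear, Hessian, and remainder contributions, kill the remainder by $o(|y|^2)$, kill the linear term via the symmetrization $y\mapsto -y$ combined with the infinitesimal evenness hypothesis, and pass to the limit in the Hessian term using $\nu_r\weakto\nu$. Your write-up is in fact slightly more careful than the paper's in two places: you verify explicitly that $0_n\in\Leb(u)$ with $u^*(0_n)=u(0_n)$, and you justify why weak convergence against $C_c$ suffices to test against the polynomials $\xi\mapsto\xi_i\xi_j$ (the paper simply asserts Step~3 without comment).
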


\begin{proof}
From the Taylor theorem,  there exist a neighborhood $\Omega' \subset \Omega$ of $0_n$ and functions $h_{ij}: \Omega' \to \setR$ such that  $\lim\limits_{x \to 0_n} h_{ij}(x)=0$ and
\[
u(x) - u(0_n) = \sum_{i=1}^n \frac{\partial u}{ \partial x_i}(0_n) x_i +  \frac{1}{2}\sum_{i,j=1}^n \frac{\partial^2 u}{ \partial x_i \partial x_j}(0_n)x_ix_j +  \sum_{i,j=1}^n h_{ij}(x)x_ix_j
\] 
for any $x \in \Omega'$. Then for any $r>0$ such that $B_r(0_n) \subset \Omega'$,
\begin{align}\label{eq:step0}
\Delta^{\|\cdot\|}_{\mu,r}u(0_n) =  \sum_{i=1}^n \frac{\partial u}{ \partial x_i}(0_n) \frac{1}{r^2} \fint_{B_r(0_n)} x_i \di \mu(x) & +  \frac{1}{2} \sum_{i,j=1}^n \frac{\partial^2 u}{ \partial x_i \partial x_j}(0_n)\frac{1}{r^2} \fint_{B_r(0_n)} x_i x_j\di \mu(x)\nonumber \\ &  +  \sum_{i,j=1}^n \frac{1}{r^2} \fint_{B_r(0_n)} h_{ij}(x)x_ix_j \di \mu(x). 
\end{align}

\hfill

\noindent \textbf{Step 1.} We fix $i,j \in \{1,\ldots,n\}$ and show that
\begin{equation}\label{eq:step2}
\lim\limits_{r \downarrow 0} \frac{1}{r^2} \fint_{B_r(0_n)} h_{ij}(x)x_ix_j \di \mu(x)  = 0.
\end{equation}
Fix $\eps>0$. Then there exists $\delta \in (0,r_0)$ such that $|h_{ij}(x)|<\eps$ for any $x \in B_\delta(0_n)$. For any $r \in (0,\delta)$,
\[
\left| \frac{1}{r^2} \fint_{B_r(0_n)} h_{ij}(x)x_ix_j \di \mu(x) \right| \le \frac{1}{r^2 } \fint_{B_r(0_n)} |h_{ij}(x)| |x_i| |x_j| \di \mu(x) < \eps,
\]
where we have used that $|x_i| \le r$ and $|x_j|\le r$ to get the last inequality. This yields \eqref{eq:step2}.\\

\noindent \textbf{Step 2.} We fix $i \in \{1,\ldots,n\}$ and show that
\begin{equation}\label{eq:step1}
\lim\limits_{r \downarrow 0} \frac{1}{r^2}\fint_{B_r(0_n)} x_i \di \mu(x)  = 0.
\end{equation}
Observe that
\begin{equation}\label{eq:change}
\fint_{B_r(0_n)} x_i \di \mu(x) = \frac{1}{\int_{B_r(0_n)} w(x)\di x}\int_{B_r(0_n)} x_i w(x) \di x
\end{equation}
and
\begin{align}\label{eq:change1/2}
\int_{B_r(0_n)} x_i w(x) \di x & = \frac{1}{2} \int_{B_r(0_n)} x_i w(x) \di x + \frac{1}{2} \int_{B_r(0_n)} x_i w(x) \di x  \nonumber \\
& = \frac{1}{2} \int_{B_r(0_n)} x_i w(x) \di x + \frac{1}{2} \int_{B_r(0_n)} -x_i' w(-x') \di x' \nonumber \\
& = \frac{1}{2}\int_{B_r(0_n)} x_i [w(x)-w(-x)] \di x,
\end{align}
where we have used the change of variable $x'=-x$ in the second integral to pass from the first line to the second.  Set $S_w(r)\coloneqq \text{$\leb^n$-}\esssup\limits_{x \in B_r(0_n)} |w(x)-w(-x)|.$ Then
\begin{align*}
\left|\frac{1}{r^2}\fint_{B_r(0_n)} x_i \di \mu(x)\right| & = \frac{1}{2r^2|\int_{B_r(0_n)} w(x)\di x|} \left| \int_{B_r(0_n)} x_i [w(x)-w(-x)] \di x \right| \\
& \le \frac{1}{2r^2|\int_{B_r(0_n)} w(x)\di x|} \left( \int_{B_r(0_n)} |x_i| |w(x)-w(-x)| \di x \right) \\
& \le \frac{S_w(r)}{2r|\int_{B_r(0_n)} w(y)\di y|}  \int_{B_r(0_n)} 1 \di x \\
& = \frac{S_w(r)}{2r|\fint_{B_r(0_n)} w(y)\di y|}
\end{align*}
where we have used $|x_i|\le r$ and $|w(x)-w(-x)|\le S_w(r)$ to get the second inequality. Then \eqref{eq:step1} follows from the infinitesimal evenness assumption on $w$.\\

\noindent \textbf{Step 3.} We fix $i,j \in \{1,\ldots,n\}$ and show that
\begin{equation}\label{eq:step3}
\lim\limits_{r \downarrow 0} \frac{1}{r^2} \fint_{B_r(0_n)} x_ix_j \di \mu(x)  = \int_{B_1(0_n)} y_iy_j \di \nu(y).
\end{equation}
Performing the change of variable $y=x/r$,  we have
\begin{align*}
\frac{1}{r^2} \fint_{B_r(0_n)} x_ix_j \di \mu(x) & = \frac{1}{r^2 \mu(B_r(0_n))} \int_{B_1(0_n)} (ry_i)(ry_j)w(ry) r^n  \di y\\
& = \int_{B_1(0_n)} y_i y_j \di \nu_r(y)  \to\int_{B_1(0_n)} y_i y_j \di \nu(y) \qquad \text{as }r\downarrow 0.
 \end{align*}
\end{proof}

\begin{rem}\label{rem:Prok}
By the Prokhorov theorem, the probability measures $\{\nu_r\}$ always admit accumulation points in the weak topology of measures as $r \downarrow 0$.   The assumption made in the previous proposition demands that there is a unique limit. If this is not satisfied, then $\Delta^{\|\cdot\|}_{\mu}u(0_n)$ may be understood as multivalued, namely we get
\[
\Delta^{\|\cdot\|}_{\mu}u(0_n)=\left\{ \frac{1}{2} \Tr (M_{\bar{\nu}_\alpha} \nabla^2u)(0_n)\right\}
\]
where $\{\bar{\nu}_\alpha\}$ are the limit points of $\{\nu_r\}$ as $r \downarrow 0$.
\end{rem}

A version of Proposition \ref{prop:as_even_weights} also holds in the case of the $\SAMV$ Laplacian, but under slightly different assumptions.

\begin{prop}\label{prop:sym_as_even_weights}  Consider $(\Omega,\|\cdot\|,w \leb^n)$ where $w$ is a vanishing weight at $0_n$ of domain $\Omega$ which is infinitesimally even at $0_n$.  Assume that $\mu \df w \leb^n$  satisfies a comparability condition at $0_n$, that the weight
\begin{equation}\label{eq:tildew}
\tilde{w}(\cdot) \coloneqq \frac{w(\cdot)}{2}\left( 1 + \frac{\mu(B_r(0_n))}{\mu(B_r(\cdot))} \right)
\end{equation}
is infinitesimally even at $0_n$,  and that there exists a Radon measure $\tilde{\nu}$ on $B_1(0_n)$ such that
\begin{equation}\label{eq:prob_meas_2}
\left( \frac{\tilde{w}(r\, \cdot)r^n}{\mu(B_r(0_n))} \right) \leb^n \measrestr B_1(0_n) \fd \tilde{\nu}_r  \weakto  \tilde{\nu} \qquad \text{as $r \downarrow 0$.}
\end{equation}
Then the pointwise $\SAMV$ Laplacian at $0_n$ of any function $u : \Omega \to \setR$ two times differentiable at $0_n$ exists and satisfies
\begin{equation}\label{eq:SAMV_in_0}
\tilde{\Delta}^{\|\cdot\|}_{\mu}u(0_n)=\frac{1}{2} \Tr (M_{\tilde{\nu}} \nabla^2u)(0_n).
\end{equation}
\end{prop}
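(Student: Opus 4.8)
The plan is to adapt the proof of Proposition~\ref{prop:as_even_weights}, the only new bookkeeping coming from the fact that the weight $\tilde{w}$ of \eqref{eq:tildew} depends on the radius $r$. Throughout, $\tilde{w}=\tilde{w}_r$ carries the same $r$ as the one in $\tilde{\Delta}^{\|\cdot\|}_{\mu,r}$, and the hypothesis ``$\tilde{w}$ is infinitesimally even at $0_n$'' is read as
\[
\tilde{S}(r)\df\text{$\leb^n$-}\esssup_{x\in B_r(0_n)}\big|\tilde{w}_r(x)-\tilde{w}_r(-x)\big|=o\!\left(r\fint_{B_r(0_n)}\tilde{w}_r\di\leb^n\right)\qquad\text{as }r\downarrow 0.
\]
Since $u$ is twice differentiable at $0_n$, it is continuous at $0_n$ and bounded near $0_n$ (by the Taylor expansion), so $0_n\in\Leb(u)$ with $u^*(0_n)=u(0_n)$ and $\tilde{\Delta}^{\|\cdot\|}_{\mu,r}u(0_n)$ is well defined. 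Using $\big(1+\mu(B_r(0_n))/\mu(B_r(y))\big)\di\mu(y)=2\tilde{w}_r(y)\di y$ one rewrites
\[
\tilde{\Delta}^{\|\cdot\|}_{\mu,r}u(0_n)=\frac{1}{r^2\mu(B_r(0_n))}\int_{B_r(0_n)}\big(u(y)-u(0_n)\big)\,\tilde{w}_r(y)\di y,
\]
and inserting the second-order Taylor expansion $u(y)-u(0_n)=\sum_i u_i(0_n)y_i+\frac12\sum_{i,j}u_{ij}(0_n)y_iy_j+\sum_{i,j}h_{ij}(y)y_iy_j$ (with $h_{ij}(y)\to0$ as $y\to0_n$) splits the right-hand side into a linear part $\sum_i u_i(0_n)A_i(r)$, a quadratic part $\frac12\sum_{i,j}u_{ij}(0_n)B_{ij}(r)$, and a remainder $\sum_{i,j}C_{ij}(r)$. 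It then suffices to show $A_i(r)\to0$, $B_{ij}(r)\to(M_{\tilde{\nu}})_{ij}$ and $C_{ij}(r)\to0$.

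The first step is to exploit the comparability condition at $0_n$ (cf.\ Definition~\ref{def:comparability}): there are $r_0,C>0$ with $\mu(B_r(0_n))\le C\mu(B_r(y))$ for $r\in(0,r_0)$ and $\leb^n$-a.e.\ $y\in B_r(0_n)$, hence $\frac12 w\le\tilde{w}_r\le\frac{1+C}{2}w$ on $B_r(0_n)$; integrating, $\frac12\le\mu(B_r(0_n))^{-1}\int_{B_r(0_n)}\tilde{w}_r\di\leb^n\le\frac{1+C}{2}$, so that $\fint_{B_r(0_n)}\tilde{w}_r\di\leb^n$ is comparable to $\fint_{B_r(0_n)}w\di\leb^n$ and the masses of the $\tilde{\nu}_r$ lie in $[\frac12,\frac{1+C}{2}]$. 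Next, $A_i(r)\to0$: since $\|\cdot\|$ is a norm, $B_r(0_n)=-B_r(0_n)$, and the substitution $y\mapsto-y$ as in \eqref{eq:change1/2} gives $\int_{B_r(0_n)}y_i\tilde{w}_r(y)\di y=\frac12\int_{B_r(0_n)}y_i\big(\tilde{w}_r(y)-\tilde{w}_r(-y)\big)\di y$, whence, using $|y_i|\le c\,r$ on $B_r(0_n)$ for a constant $c$ depending only on $\|\cdot\|$ (as in Proposition~\ref{prop:as_even_weights}),
\[
|A_i(r)|\le\frac{c\,\tilde{S}(r)}{2r\fint_{B_r(0_n)}w\di\leb^n}\,;
\]
by the reading of infinitesimal evenness above and $\fint_{B_r(0_n)}\tilde{w}_r\di\leb^n\le\frac{1+C}{2}\fint_{B_r(0_n)}w\di\leb^n$, one has $\tilde{S}(r)=o(r\fint_{B_r(0_n)}w\di\leb^n)$, so $A_i(r)\to0$. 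Likewise $C_{ij}(r)\to0$: fixing $\eps>0$ and $\delta$ with $|h_{ij}|<\eps$ on $B_\delta(0_n)$, for $r<\delta$ one gets $|C_{ij}(r)|\le\eps c^2\mu(B_r(0_n))^{-1}\int_{B_r(0_n)}\tilde{w}_r\di\leb^n\le\frac{(1+C)c^2}{2}\eps$.

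For the quadratic term, the change of variable $y=rz$ gives $B_{ij}(r)=\int_{B_1(0_n)}z_iz_j\di\tilde{\nu}_r(z)$; as the $\tilde{\nu}_r$ are supported in the fixed compact set $\overline{B_1(0_n)}$ with uniformly bounded mass, applying \eqref{eq:prob_meas_2} to a function of $C_c(\setR^n)$ equal to $z\mapsto z_iz_j$ on $\overline{B_1(0_n)}$ yields $B_{ij}(r)\to\int_{B_1(0_n)}z_iz_j\di\tilde{\nu}(z)=(M_{\tilde{\nu}})_{ij}$, cf.\ \eqref{eq:second_moment_matrix}. Collecting the three limits and using the symmetry of $M_{\tilde{\nu}}$ and of $\nabla^2u(0_n)$, $\tilde{\Delta}^{\|\cdot\|}_{\mu,r}u(0_n)\to\frac12\sum_{i,j}u_{ij}(0_n)(M_{\tilde{\nu}})_{ij}=\frac12\Tr(M_{\tilde{\nu}}\nabla^2u)(0_n)$, which is \eqref{eq:SAMV_in_0}. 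I expect the only delicate point to be this two-sided bound on $\tilde{w}_r$: recognising that the comparability condition at $0_n$ is precisely what turns the scale-coupled infinitesimal evenness of $\tilde{w}$ into an estimate against $\fint_{B_r(0_n)}w\di\leb^n$ and keeps the total masses of the $\tilde{\nu}_r$ under control; with that in hand the argument reduces to a transcription of the proof of Proposition~\ref{prop:as_even_weights}.
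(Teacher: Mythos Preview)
Your proof is correct and follows essentially the same route as the paper: rewrite $\tilde{\Delta}^{\|\cdot\|}_{\mu,r}u(0_n)$ via $\tilde{w}_r$, insert the second-order Taylor expansion to obtain the three pieces $A_i$, $B_{ij}$, $C_{ij}$, and handle each exactly as in Proposition~\ref{prop:as_even_weights}, using the comparability condition at $0_n$ to control $\int_{B_r(0_n)}\tilde{w}_r\di\leb^n$ against $\mu(B_r(0_n))$. Your treatment is in fact slightly more careful than the paper's in two places: you make explicit the norm-dependent constant in the bound $|y_i|\le c\,r$ (the paper tacitly writes $|x_i|\le r$), and you spell out how the infinitesimal-evenness hypothesis on the $r$-dependent $\tilde{w}$ is to be read.
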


\begin{proof}
Let $u : \Omega \to \setR$ be two times differentiable at $0_n$.  Using the Taylor theorem like in the proof of Proposition \ref{prop:as_even_weights},  we get that for any small enough $r$, it holds that
\begin{align}\label{eq:sym_step0}
\tilde{\Delta}_{\mu,r}^{\lVert\cdot\rVert}u(0_n) & =  \sum_{i=1}^n \frac{\partial u}{ \partial x_i}(0_n) \frac{1}{r^2 \mu(B_r(0_n))} \int_{B_r(0_n)} x_i \tilde{w}(x)\di x \nonumber \\
& +  \sum_{i,j=1}^n \frac{\partial^2 u}{ \partial x_i \partial x_j}(0_n)\frac{1}{r^2\mu(B_r(0_n))} \int_{B_r(0_n)} x_i x_j \tilde{w}(x) \di x\nonumber \\ &  +  \sum_{i,j=1}^n \frac{1}{r^2\mu(B_r(0_n))} \int_{B_r(0_n)} h_{ij}(x)x_ix_j \tilde{w}(x) \di x.
\end{align}

Fix $i,j \in \{1,\ldots,n\}$.  For any $\eps>0$ there exists $\delta \in (0,r_0)$ such that $|h_{ij}(x)|<\eps$ for any $x \in B_\delta(0_n)$. For any $r \in (0,\delta)$,
\begin{align*}
\left| \frac{1}{r^2\mu(B_r(0_n))} \int_{B_r(0_n)} h_{ij}(x)x_ix_j \tilde{w}(x) \di x \right| & \le \frac{1}{r^2 \mu(B_r(0_n))} \int_{B_r(0_n)} |h_{ij}(x)| |x_i| |x_j| \tilde{w}(x) \di x\\
& \le \frac{\eps}{\mu(B_r(0_n))} \int_{B_r(0_n)} \tilde{w}(x) \di x  \le (1+C) \eps/2,
\end{align*}
where we have used the local comparability condition at $0_n$ to get the last inequality. This yields 
\begin{equation}\label{eq:sym_step2}
\lim\limits_{r \downarrow 0} \frac{1}{r^2\mu(B_r(0_n))} \int_{B_r(0_n)} h_{ij}(x)x_ix_j \tilde{w}(x) \di x = 0.
\end{equation}

Moreover, we have that
\begin{equation}\label{eq:sym_step1}
\lim\limits_{r \downarrow 0}\frac{1}{r^2\mu(B_r(0_n))} \int_{B_r(0_n)}x_i  \tilde{w}(x) \di x   = 0.
\end{equation}
Indeed, acting like in \eqref{eq:change1/2}
 we get
\[
\int_{B_r(0_n)} x_i  \tilde{w}(x) \di x = \frac{1}{2}\int_{B_r(0_n)} x_i [ \tilde{w}(x)- \tilde{w}(-x)] \di x.
\]
Set $S_ {\tilde{w}}(r)\coloneqq  \text{$\leb^n$-}\esssup\limits_{x \in B_r(0_n)} | \tilde{w}(x)- \tilde{w}(-x)|.$ Like in the previous proof, we obtain
\begin{align*}
\left|\frac{1}{r^2\mu(B_r(0_n))}\int_{B_r(0_n)} x_i \tilde{w}(x)\di x\right| &  \le  \frac{S_{ \tilde{w}}(r)}{2r|\fint_{B_r(0_n)} w(x)\di x|} \le \frac{(1+C)S_{ \tilde{w}}(r)}{4r|\fint_{B_r(0_n)} \tilde{w}(x)\di x|}
\end{align*}
where we have used the local comparability condition  at $0_n$ to get the last inequality. Then \eqref{eq:sym_step1} follows from the infinitesimal evenness assumption on $\tilde{w}$.

Lastly, acting as in the previous proof and using the definition of $\tilde{\nu}_r$,  we obtain that
\begin{equation}\label{eq:sym_step3}
\lim\limits_{r \downarrow 0} \frac{1}{r^2\mu(B_r(0_n))} \int_{B_r(0_n)} x_ix_j \tilde{w}(x)\di x  = \int_{B_1(0_n)} y_iy_j \di \tilde{\nu}(y)
\end{equation}
for any $i,j \in \{1,\ldots,n\}$.  

 The conclusion follows by letting $r \downarrow 0$ in \eqref{eq:sym_step0} and using \eqref{eq:sym_step2}, \eqref{eq:sym_step1} and \eqref{eq:sym_step3}.
\end{proof}

\begin{rem}
A simple computation shows that
\begin{equation}\label{eq:tilde}
\tilde{\nu}_r = \frac{1}{2} \left(\nu_r + \frac{r^n}{\mu(B_r(\cdot))}\mu \measrestr B_1(0_n)\right).
\end{equation}
In this way, it clearly appears that there exists $r_0>0$ such that the measures $\{\tilde{\nu}_r\}_{0<r<r_0}$ are uniformly bounded,  thanks to the local comparability condition at $0_n$ of $\mu$. Therefore, like in the non-symmetrized case (see Remark \ref{rem:Prok}),  the Prokhorov theorem implies that the measures $\{\tilde{\nu}_r\}_{r>0}$ admit limit points in the weak topology of measures as $r \downarrow 0$.  If these limit points are not unique then $\tilde{\Delta}^{\|\cdot\|}_{\mu}u(0)$ may be understood as multivalued, namely
\[
\tilde{\Delta}^{\|\cdot\|}_{\mu}u(0_n)=\left\{ \frac{1}{2} \Tr (M_{\hat{\nu}_\alpha} \nabla^2u)(0_n)\right\}
\]
where $\{\hat{\nu}_\alpha\}$ are the limit points of $\{\tilde{\nu}_r\}$ as $r \downarrow 0$.
\end{rem}

\begin{rem}\label{rk:evenness}
If $w$ is even,  then so is $\tilde{w}$. Indeed, in this case, obvious changes of variable show that
\[
\mu(B_r(-x)) = \int_{B_r(0_n)}w(-x+y)\di y = \int_{B_r(0_n)}w(-x-y)\di y = \int_{B_r(0_n)}w(x+y)\di y = \mu(B_r(x))
\]
from which evenness of $\tilde{w}$ follows from the definition in \eqref{eq:tildew}.
\end{rem}

\subsubsection{Weights $w(\cdot)=|\cdot|^{\alpha}$}
Proposition \ref{prop:as_even_weights} may be applied to the case of weights $w(\cdot)=|\cdot|^{\alpha}$ where $|\cdot|$ is the Euclidean norm and $\alpha>0$. 

\begin{cor}
Take $\alpha > 0$ and set $w(\cdot)\coloneqq|\cdot|^{\alpha}$ where $|\cdot|$ denotes the Euclidean norm. Consider $(\setR^n,\dist_e,\mu)$ where $\mu\df w\leb^n$. Then for any $u : \setR^n \to \setR$ two times differentiable at $0_n$ the pointwise $\AMV$ and $\SAMV$ Laplacians of $u$ in $0_n$ exist and are given by
\begin{align}
\Delta^{\dist_e}_{\mu}u(0_n) &= \frac{n+\alpha}{2n(n+\alpha+2)}\Delta u (0_n), \label{eq:amv-alpha}\\
\tilde{\Delta}^{\dist_e}_{\mu}u(0_n) &= \frac{n^2 + (2 + \alpha)n+ \alpha}{2n(n+\alpha+2)(n+2)}\Delta u (0_n), \label{eq:samv-alpha}
\end{align}
where $\Delta\df\sum_{i} \partial_{ii}$ is the classical Laplacian.
\end{cor}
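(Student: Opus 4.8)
The plan is to read \eqref{eq:amv-alpha} off Proposition~\ref{prop:as_even_weights} and \eqref{eq:samv-alpha} off Proposition~\ref{prop:sym_as_even_weights}, using that the radial symmetry and homogeneity of $w(\cdot)=|\cdot|^{\alpha}$ trivialize every hypothesis and make the blow-up measures explicit. First I would check that $w$ is a vanishing weight at $0_n$: it is positive away from the origin, and in polar coordinates $\fint_{B_r(0_n)}|x|^{\alpha}\di x=\frac{n}{n+\alpha}\,r^{\alpha}\to0$, so $0_n\in\Leb(w)$ with $w^{*}(0_n)=0$; moreover $w$ is infinitesimally even at $0_n$ because $w(x)=w(-x)$ identically. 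By homogeneity $\mu(B_r(0_n))=r^{n+\alpha}\int_{B_1(0_n)}|z|^{\alpha}\di z$ and, more generally, the density $w(r\,\cdot)\,r^{n}/\mu(B_r(0_n))$ equals $|\cdot|^{\alpha}\big/\int_{B_1(0_n)}|z|^{\alpha}\di z$ on $B_1(0_n)$ independently of $r$; hence $\nu_r$ is the fixed measure $\nu\df\big(\int_{B_1(0_n)}|z|^{\alpha}\di z\big)^{-1}|\cdot|^{\alpha}\,\leb^{n}\measrestr B_1(0_n)$, so the weak convergence demanded by Proposition~\ref{prop:as_even_weights} is automatic. Since $\nu$ is invariant under the orthogonal group, $M_{\nu}=\lambda_n\,\Id$ with $\lambda_n=\tfrac1n\int_{B_1(0_n)}|y|^{2}\di\nu(y)$, and dividing $\int_{B_1(0_n)}|y|^{2+\alpha}\di y=\sigma_{n-1}/(n+\alpha+2)$ by $\int_{B_1(0_n)}|y|^{\alpha}\di y=\sigma_{n-1}/(n+\alpha)$ gives $\lambda_n=\frac{n+\alpha}{n(n+\alpha+2)}$. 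Substituting into \eqref{eq:AMV_in_0} and using $\Tr(\lambda_n\,\Id\,\nabla^{2}u)=\lambda_n\Delta u$ yields \eqref{eq:amv-alpha}.

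For \eqref{eq:samv-alpha} I would proceed through Proposition~\ref{prop:sym_as_even_weights}. Its hypotheses are verified as follows: $\mu$ satisfies a comparability condition at $0_n$ because for $y\in B_r(0_n)$ one has $\mu(B_r(y))\ge c\,r^{n+\alpha}$ uniformly — by a short case split, using $B_{r/2}(0_n)\subseteq B_r(y)$ when $|y|\le r/2$, and $B_{r/4}(y)\subseteq B_r(y)$ together with $|z|\ge r/4$ on $B_{r/4}(y)$ when $r/2<|y|<r$ — while $\mu(B_r(0_n))=c_{\alpha}r^{n+\alpha}$; $\tilde w$ is even, hence infinitesimally even, by Remark~\ref{rk:evenness} since $w$ is even; and $\tilde\nu_r$ is again $r$-independent by homogeneity, so converges. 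It then remains to compute $M_{\tilde\nu}$: writing $\tilde\nu=\tfrac12(\nu+\rho)$ as in \eqref{eq:tilde}, with $\rho$ the measure of radial density $|\cdot|^{\alpha}/g(\cdot)$ where $g(y)\df\int_{B_1(y)}|z|^{\alpha}\di z$ is itself radial, one gets $M_{\tilde\nu}=\tfrac12(M_{\nu}+M_{\rho})$ with $M_{\rho}$ a scalar multiple of $\Id$; combining the resulting scalar with $\lambda_n$ and appealing to \eqref{eq:SAMV_in_0} gives \eqref{eq:samv-alpha}. Equivalently, one can argue as in the proof of Proposition~\ref{prop:positive_weights}: the algebraic identity $\tilde\Delta^{\dist_e}_{\mu,r}u(0_n)=\tfrac12\big(\Delta^{\dist_e}_{\mu,r}u(0_n)+I(r)\big)$, with $I(r)\df r^{-2}\int_{B_r(0_n)}\big(u(y)-u(0_n)\big)\,w(y)/\mu(B_r(y))\,\di y$, reduces the problem to identifying $\lim_{r\downarrow0}I(r)$, which should equal $\Delta^{\dist_e}_{\leb^n}u(0_n)=\tfrac1{2(n+2)}\Delta u(0_n)$; averaging that with \eqref{eq:amv-alpha} again produces \eqref{eq:samv-alpha}.

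The polar-coordinate integrals are routine. The step I expect to be the main obstacle is the evaluation of $\lim_{r\downarrow0}I(r)$ — equivalently, of the second moment of $\rho$, i.e.\ of $\int_{B_1(0_n)}|y|^{2+\alpha}/g(y)\,\di y$: at a point where the weight vanishes the ratio $w(y)/\mu(B_r(y))$ no longer behaves like $1/\leb^{n}(B_r(y))$, so one cannot simply reuse the soft perturbative argument available in the positive-weight case and must instead control the auxiliary radial function $g$ directly. The remaining points — the geometric case split underlying the comparability condition at $0_n$ and the Taylor expansion of $u$ at $0_n$ — are elementary.
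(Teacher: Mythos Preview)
Your treatment of \eqref{eq:amv-alpha} is correct and essentially identical to the paper's: both note that $w$ is even and $\alpha$-homogeneous, so $\nu_r\equiv\nu$ is constant in $r$, and compute $M_\nu=\frac{n+\alpha}{n(n+\alpha+2)}I_n$ by rotational symmetry.

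For \eqref{eq:samv-alpha} there is a genuine gap, and you have in fact located it yourself. Your identification $\tilde\nu=\tfrac12(\nu+\rho)$ with $\rho$ of density $|y|^\alpha/g(y)$, $g(y)\df\int_{B_1(y)}|z|^\alpha\di z$, is correct: by homogeneity the density of the second summand of $\tilde\nu_r$ at $y\in B_1(0_n)$ is $w(ry)r^n/\mu(B_r(ry))=|y|^\alpha/g(y)$ for every $r$. The paper instead uses \eqref{eq:tilde} together with the Lebesgue differentiation theorem to identify that second summand with $\omega_n^{-1}\leb^n\measrestr B_1(0_n)$, whose second-moment matrix is $\frac{1}{n+2}I_n$; this is precisely what produces the clean constant in \eqref{eq:samv-alpha}. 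But that limit is taken at the wrong scale --- one should evaluate at $ry$, not at $y$ --- and your $\rho$ is the right object, not the normalized Lebesgue measure.

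The difficulty is that $M_\rho\neq\frac{1}{n+2}I_n$, so neither your claim that ``combining the resulting scalar with $\lambda_n$ gives \eqref{eq:samv-alpha}'' nor the heuristic $\lim_{r\downarrow0}I(r)=\frac{1}{2(n+2)}\Delta u(0_n)$ can be salvaged. A direct check in the simplest case $n=1$, $\alpha=1$ settles the matter: there $\mu(B_r(y))=r^2+y^2$ for $|y|\le r$, and for $u(y)=y^2$ one computes
\[
\tilde\Delta^{\dist_e}_{\mu,r}u(0)=\int_0^1\frac{t^3(t^2+2)}{t^2+1}\,\di t=\frac34-\frac{\ln 2}{2}\approx 0.4034
\]
for every $r>0$, whereas \eqref{eq:samv-alpha} predicts $\tfrac{5}{12}\approx 0.4167$. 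So the obstacle you singled out is not merely technical: the constant stated in \eqref{eq:samv-alpha} is incorrect, and no argument along either your line or the paper's can establish it.
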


\begin{proof}
The weight $w$ is continuous, hence $0_n$ is a Lebesgue point with $w(0_n)=0$. Moreover, the infinitesimal evenness of $w$ is trivially satisfied because $w$ is even. Finally the $\alpha$-homogeneity of $w$ implies that the measures $\{\nu_r\}_{r>0}$ are constantly equal to the measure
\[
\nu = \frac{n+\alpha}{\sigma_{n-1}}|\cdot|^\alpha \leb^n \measrestr B_1(0_n),
\]
hence Proposition \ref{prop:as_even_weights} applies and yields
\begin{equation}
\Delta^{\dist_e}_{\mu}u(0_n)=\frac{1}{2} \Tr (M_\nu \nabla^2u)(0_n)
\end{equation}
where we recall that $M_\nu$ is defined in \eqref{eq:second_moment_matrix}. Let us compute $M_\nu$.  If $n = 1$,  
\[
\int_{-1}^1 y^2 |y|^{\alpha} \di \nu(y) = \frac{1+\alpha}{\sigma_{0}} \int_{-1}^1 y^2 |y|^{\alpha} \di \nu(y)  = \frac{1+\alpha}{2 + \alpha +1}\, \cdot 
\]
Assume now $n>1$. Consider $i,j \in \{1,\ldots, n\}$ such that $i \neq j$.  Then the map $y \mapsto y_i y_j |y|^{\alpha}$ is odd, so
\[
\int_{B_1(0_n)} y_i y_j |y|^{\alpha} \di y =0.
\]
Moreover, the invariance of the Lebesgue measure under exchanging coordinates $\theta_i$ and $\theta_{n}$ yields
\[
\int_{B_1(0_n)} y_i^2 |y|^{\alpha} \di y = \int_{B_1(0_n)} y_{n}^2 |y|^{\alpha} \di y  =  \int_0^1 \int_{\mathbb{S}^{n-1}} r^{\alpha + n + 1} \theta_n^2 \di \sigma(\theta)\di r=\frac{1}{n+\alpha+2} \int_{\mathbb{S}^{n-1}} \theta_{n}^2 \di \sigma(\theta)
\]
where $\sigma$ is the usual surface measure on $\mathbb{S}^{n-1}$.  Since 
\[
\sum_{1\le \ell \le n} \int_{\mathbb{S}^{n-1}} \theta_{\ell}^2 \di \sigma(\theta) =  \int_{\mathbb{S}^{n-1}} |\theta|^2 \di \sigma(\theta) = \sigma_{n-1}
\]
we get
\[
\int_{\mathbb{S}^{n-1}} \theta_{n}^2 \di \sigma(\theta) = \frac{\sigma_{n-1}}{n} \, \cdot 
\]
In the end, for any $n \ge 1$,
\[
M_\nu = \frac{n+\alpha}{n(n+\alpha+2)} I_n
\]
and the result follows.

For the $\SAMV$ Laplacian, consider $\tilde{w}$ as defined in \eqref{eq:tildew}.  We want to apply Proposition \ref{prop:sym_as_even_weights}. The weight $\tilde{w}$ is even by Remark \ref{rk:evenness}.  Thanks to $\eqref{eq:tilde}$,  in order to prove \eqref{eq:prob_meas_2}, one is left with computing the limit as $r \downarrow 0$ of
\[
\frac{r^n}{\mu(B_r(x))}
\]
for any $x \neq 0$.  This follows from the Lebesgue differentiation theorem:
\[
\frac{r^n}{\mu(B_r(x))}  = \left( \omega_n \fint_{B_r(x)} w(y) \di y \right)^{-1} \to \frac{1}{|x|^\alpha \omega_n} \, \cdot 
\]
Consequently, 
\[
\tilde{\nu} = \frac{1}{2} \left( \nu + \frac{\leb^n}{\omega_n}\right).
\]
Finally, computing the second order moments w.r.t.~to $\tilde{\nu}$, simplifying and putting back into \eqref{eq:SAMV_in_0} gives \eqref{eq:samv-alpha}.
\end{proof}

\subsubsection{Separable weights}

After Proposition \ref{prop:as_even_weights} it is natural to consider the class of separable weights, defined as follows.

\begin{D}
We say that a vanishing weight $w$ of domain $\Omega$ is separable in a neighborhood of $0_n$ if there exist $r>0$ such that $B_r(0_n) \subset \Omega$ and $f \in L^1([0,r],\leb^1)$, $g \in L^1(\mathbb{S}^{n-1},\sigma)$,  such that $w(x)=f(|x|)g(x/|x|)$ for $\leb^n$-a.e.~$x \in \Omega\backslash \{0_n\}$. We say that $f$ (resp.~$g$) is the radial (resp.~angular) part of $w$.
\end{D}

\begin{prop}
 Consider $(\Omega,\dist_e,w \leb^n)$ where $w$ is a vanishing weight of domain $\Omega$ which is separable in a neighborhood of $0_n$.  Let $f$ and $g$ be the radial and angular parts of $w$, respectively.  Set $\mu=w \leb^n$ and
\[
 \nu \coloneqq (c_f \, g) \,\sigma
\]
where $\sigma $ is the normalized surface measure on the sphere $\mathbb{S}^{n-1}$ and
\[
 c_f \coloneqq \int_0^1 f(\rho) \rho^{n+1} \di \rho \in (0,+\infty).
\]
Then the following hold.
\begin{enumerate}
\item For any function $u$ two-times differentiable at $0_n$, the $\AMV$ Laplacian $\Delta_{\mu}^{\dist_e} u(0_n)$ exists and satisfies
\begin{equation}\label{eq:formula}
\Delta_{\mu}^{\dist_e} u(0_n) = \frac{1}{2}\Tr (M_\nu \nabla^2 u)(0_n)
\end{equation}
if and only if
\begin{equation}\label{eq:nascondition}
\bigg\langle \nabla u (0_n), \int_{\mathbb{S}^{n-1}} \theta g(\theta) \di \sigma(\theta) \bigg\rangle = 0.
\end{equation}

\item For any function $u$ two-times differentiable at $0_n$ such that $\nabla u (0_n)=0$,  the $\AMV$ Laplacian $\Delta_{\mu}^{\dist_e} u(0_n)$ exists and satisfies \eqref{eq:formula}.

\item If $\int_{\mathbb{S}^{n-1}} \theta g(\theta) \di \sigma(\theta)=0_n$, then for any function $u$ two-times differentiable at $0_n$, the $\AMV$ Laplacian $\Delta_{\mu}^{\dist_e} u(0_n)$ exists and satisfies \eqref{eq:formula}.
\end{enumerate}
\end{prop}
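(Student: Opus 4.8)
Parts (2) and (3) will be immediate consequences of (1): in (2) one has $\nabla u(0_n)=0$, so the left-hand side of \eqref{eq:nascondition} vanishes, while in (3) the vector $\int_{\mathbb{S}^{n-1}}\theta g(\theta)\di\sigma(\theta)$ is itself zero, so \eqref{eq:nascondition} holds for every $u$. Hence the plan is to prove (1). I would start, exactly as in the proof of Proposition~\ref{prop:as_even_weights}, from the second-order Taylor expansion of $u$ at $0_n$, with remainder $\sum_{i,j}h_{ij}(x)x_ix_j$, $h_{ij}(x)\to0$ as $x\to0_n$. Plugging it into $\Delta^{\dist_e}_{\mu,r}u(0_n)$ splits it, for small $r$, into a first-order term $A_r:=\sum_i\frac{\partial u}{\partial x_i}(0_n)\frac{1}{r^2}\fint_{B_r(0_n)}x_i\di\mu(x)$, a second-order term $B_r:=\frac12\sum_{i,j}\frac{\partial^2u}{\partial x_i\partial x_j}(0_n)\frac{1}{r^2}\fint_{B_r(0_n)}x_ix_j\di\mu(x)$, and a remainder $E_r:=\sum_{i,j}\frac{1}{r^2}\fint_{B_r(0_n)}h_{ij}(x)x_ix_j\di\mu(x)$. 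Exactly as in Step~1 of that proof, $|E_r|\le\sum_{i,j}\sup_{B_r(0_n)}|h_{ij}|\to0$ because $|x_i|,|x_j|\le r$ on $B_r(0_n)$; and $B_r$ is bounded uniformly in $r$, since $|x_ix_j|\le r^2$ there.

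Next I would exploit separability, writing $w(\rho\theta)=f(\rho)g(\theta)$ and passing to polar coordinates, which makes the radial and angular integrations decouple. Setting $F(r):=\int_0^r\rho^{n-1}f(\rho)\di\rho>0$ — note $F(r)=o(r^n)$, since $0_n\in\Leb(w)$ with $w^*(0_n)=0$ — one finds
\[
\frac{1}{r^2}\fint_{B_r(0_n)}x_i\di\mu(x)=\lambda_r\int_{\mathbb{S}^{n-1}}\theta_i\,g(\theta)\di\sigma(\theta),\qquad \lambda_r:=\frac{\int_0^r\rho^{n}f(\rho)\di\rho}{r^2 F(r)\int_{\mathbb{S}^{n-1}}g\di\sigma},
\]
and similarly $\frac{1}{r^2}\fint_{B_r(0_n)}x_ix_j\di\mu(x)$ equals an explicit ratio of radial integrals of $f$ times $\int_{\mathbb{S}^{n-1}}\theta_i\theta_j g(\theta)\di\sigma(\theta)$, which tends to $M_{\nu,ij}$ as $r\downarrow0$. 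Consequently $A_r=\lambda_r\,\big\langle\nabla u(0_n),\int_{\mathbb{S}^{n-1}}\theta g(\theta)\di\sigma(\theta)\big\rangle$ is a scalar multiple of the left-hand side of \eqref{eq:nascondition}, while $B_r\to\frac12\Tr(M_\nu\nabla^2u)(0_n)$.

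Granting this, (1) follows. If \eqref{eq:nascondition} holds then $A_r\equiv0$, hence $\Delta^{\dist_e}_{\mu,r}u(0_n)=B_r+E_r\to\frac12\Tr(M_\nu\nabla^2u)(0_n)$, i.e.~\eqref{eq:formula}. If \eqref{eq:nascondition} fails, put $c:=\langle\nabla u(0_n),\int_{\mathbb{S}^{n-1}}\theta g\di\sigma\rangle\neq0$; I would then prove $\limsup_{r\downarrow0}\lambda_r=+\infty$. The idea is that $F(r)=o(r^n)$ forbids $f$ from being concentrated near the origin: if $F(r/2)\ge(1-\delta)F(r)$ held for all small $r$, iterating this would force $F(\rho)\gtrsim\rho^{\log_2(1/(1-\delta))}$, which contradicts $F(r)=o(r^n)$ once $\delta$ is small. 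So there are $\delta>0$ and $r_k\downarrow0$ with $F(r_k/2)<(1-\delta)F(r_k)$, and then the identity $\int_0^r\rho^{n}f=rF(r)-\int_0^r F$ together with monotonicity of $F$ gives $\lambda_{r_k}\gtrsim\delta/r_k\to+\infty$. Since $B_r$ stays bounded and $E_r\to0$ while $A_{r_k}=c\,\lambda_{r_k}$ diverges, $\Delta^{\dist_e}_{\mu,r}u(0_n)$ diverges along $(r_k)$; thus $u$ has no pointwise $\AMV$ Laplacian at $0_n$, and in particular \eqref{eq:formula} cannot hold.

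The two steps carrying the argument, and where I expect the difficulty, are: identifying $\lim_{r\downarrow0}\frac{1}{r^2}\fint_{B_r(0_n)}x_ix_j\di\mu(x)$ with $M_{\nu,ij}$ — this is where separability and the finiteness of $c_f$ (valid because $f\in L^1$ and $\rho^{n+1}\le\rho^{n-1}$ on $[0,1]$) are genuinely used, and it is the point most sensitive to the behaviour of $f$ near $0$ — and the divergence estimate $\limsup_{r\downarrow0}\lambda_r=+\infty$, which is what converts the failure of \eqref{eq:nascondition} into outright nonexistence of the Laplacian rather than a mere discrepancy in value.
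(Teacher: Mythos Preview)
Your overall architecture matches the paper's exactly: Taylor expand, split into $A_r+B_r+E_r$, kill $E_r$ via $|x_i||x_j|\le r^2$, pass to polar coordinates so that separability decouples the integrals, and reduce everything to the behaviour of the radial factor $\lambda_r$ multiplying $\big\langle\nabla u(0_n),\int_{\mathbb S^{n-1}}\theta g\,\di\sigma\big\rangle$. The paper also defers the convergence $B_r\to\tfrac12\Tr(M_\nu\nabla^2u)(0_n)$ to ``acting as in the proof of Proposition~\ref{prop:as_even_weights}'', so your level of detail there is comparable.

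The one genuine difference is the ``only if'' direction. The paper argues via the median: letting $m\in(0,1)$ be the median of $f(r\rho)\,\di\rho$ on $[0,1]$, it bounds the radial ratio below by $m^{n+1}/(2r)$ and asserts this diverges. Your route is different: you use the vanishing-weight hypothesis $F(r)=o(r^n)$ directly, show by an iterated-doubling argument that one cannot have $F(r/2)\ge(1-\delta)F(r)$ for all small $r$, extract a sequence $r_k\downarrow 0$ along which $F(r_k/2)<(1-\delta)F(r_k)$, and combine this with the integration-by-parts identity $\int_0^r\rho^n f=rF(r)-\int_0^r F$ and monotonicity of $F$ to get $\lambda_{r_k}\gtrsim\delta/r_k\to+\infty$. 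Your argument is a bit more elaborate but has the virtue of making explicit where the hypothesis $w^*(0_n)=0$ enters (the paper's median $m$ depends on $r$, and its boundedness away from $0$ is not justified there). The price you pay is that you only obtain $\limsup_{r\downarrow 0}\lambda_r=+\infty$ rather than a full limit; this is still sufficient, since divergence along a subsequence already prevents $\Delta^{\dist_e}_{\mu,r}u(0_n)$ from converging.
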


\begin{proof}
We only prove the first assertion since the others are direct consequences. Acting as in the proof of Proposition \ref{prop:as_even_weights},  we get that $\Delta_{\mu}^{\dist_e}u(0_n)$ exists and satisfies \eqref{eq:formula} if and only if
\[
\lim\limits_{r \downarrow 0} \sum_{i=1}^n \frac{\partial u}{ \partial x_i}(0_n) \frac{1}{r^2} \fint_{B_r(0_n)} x_i \di \mu(x) = 0.
\]
Thanks to \eqref{eq:change}, we know that for any $i \in \{1,\ldots,n\}$,
\[
\frac{1}{r^2 }\fint_{B_r(0_n)} x_i \di \mu(x) = \frac{1}{ r \int_{B_1(0_n)} w(ry)\di y}\left( \int_{B_1(0_n)} y_i w(ry) \di y\right)
\]
from which the separation assumption yields
\[
\frac{1}{r^2 }\fint_{B_r(0_n)} x_i \di \mu(x) =\frac{\int_0^1 f(r \rho) \rho^{n+1} \di \rho}{r \int_0^1 f(r \rho) \rho \di \rho} \frac{\int_{\mathbb{S}^{n-1}} \theta_i [g(\theta)-g(-\theta)] \di \sigma(\theta)}{\int_{\mathbb{S}^{n-1}} g(\theta) \di \sigma(\theta)} \,  \cdot
\]
Now
\[
\int_{\mathbb{S}^{n-1}} \theta_i [g(\theta)-g(-\theta)] \di \sigma(\theta) = \int_{\mathbb{S}^{n-1}} \theta_i g(\theta)\di \sigma(\theta) -  \int_{\mathbb{S}^{n-1}} \theta_i g(-\theta)\di \sigma(\theta) = 2 \int_{\mathbb{S}^{n-1}} \theta_i g(\theta)\di \sigma(\theta)
\]
where we have used the change of variable $\theta'=-\theta$ in the second integral to get the last term. Thus
\begin{align}\label{eq:comp}
\sum_{i=1}^n \frac{\partial u}{ \partial x_i}(0_n) \frac{1}{r^2} \fint_{B_r(0_n)} x_i \di \mu(x) & = \frac{\int_0^1 f(r \rho) \rho^{n+1} \di \rho}{r \int_0^1 f(r \rho) \rho \di \rho} \sum_{i=1}^n \frac{\partial u}{ \partial x_i}(0_n) \frac{2\int_{\mathbb{S}^{n-1}} \theta_i g(\theta) \di \sigma(\theta)}{\int_{\mathbb{S}^{n-1}} g(\theta) \di \sigma(\theta)} \nonumber \\
& =  \frac{\int_0^1 f(r \rho) \rho^{n+1} \di \rho}{r \int_0^1 f(r \rho) \rho \di \rho} \frac{\bigg\langle \nabla u (0_n), 2\int_{\mathbb{S}^{n-1}} \theta g(\theta)\di \sigma(\theta) \bigg\rangle}{\int_{\mathbb{S}^{n-1}} g(\theta) \di \sigma(\theta)} \, \cdot
\end{align}
This immediately shows that if \eqref{eq:nascondition} holds, then $\Delta_\mu^{\dist_e}u(0_n)$ exists and is given by \eqref{eq:AMV_in_0}.

Assume now that  \eqref{eq:nascondition} does not hold.  Then
\[
c\coloneqq\left| \bigg\langle \nabla u (0_n),  2\int_{\mathbb{S}^{n-1}} \theta g(\theta) \di \sigma(\theta)\bigg\rangle \right|>0.
\]
Let $m \in (0,1)$ be the median of the measure $f(r\rho)\di \rho$ on $[0,1]$, so that
\[
\int_0^m f(r \rho) \di \rho = \int_m^1 f(r \rho) \di \rho= \frac{1}{2}\int_0^1 f(r \rho) \di \rho.
\]
Since $f$ is non-negative and $ \int_m^1 f(r \rho)\rho^{n+1}  \di \rho \ge m^{n+1} \int_m^1 f(r \rho) \di \rho$, we get
\begin{align*}
\frac{\int_0^1 f(r \rho) \rho^{n+1} \di \rho}{r \int_0^1 f(r \rho) \rho \di \rho} \ge \frac{m^{n+1} \int_m^1 f(r \rho) \di \rho}{r \int_0^1 f(r \rho) \rho \di \rho}  = \frac{m^{n+1} \int_m^1 f(r \rho) \di \rho}{ 2 r \int_m^1 f(r \rho) \rho \di \rho} \ge \frac{m^{n+1}}{2r} \, , 
\end{align*}
where we have used $\int_m^1 f(r \rho) \rho \di \rho \le \int_m^1 f(r \rho) \di \rho$ to get the last inequality. Then from \eqref{eq:comp} we get
\[
\left| \sum_{i=1}^n \frac{\partial u}{ \partial x_i}(0_n) \frac{1}{r^2 }\fint_{B_r(0_n)} x_i \di \mu(x) \right| \ge \frac{m^{n+1}}{2r}c \to +\infty
\]
as $r \downarrow 0$, so $\Delta_\mu^{\dist_e}u(0_n)$ does not converge.
\end{proof}

When $n=2$,  the Fourier theory easily yields a concrete equivalent form of the sufficient condition given in (\textit{3}).
\begin{cor}
Assume $n = 2$ and consider $(\Omega,\dist_e,w \leb^n)$ and $u$ as in the previous proposition.  If there exist $c \ge 1$ and $\phi(\cdot) \in \mathrm{Span}(\{\cos (m\, \cdot), \sin (m \, \cdot )\}_{m \ge 2})$ such that
\[
g(e^{\bold{i} \, \cdot}) = c + \phi(\cdot),
\]
then for any function $u$ two times differentiable at $0_2$,  the $\AMV$ Laplacian $\Delta_\mu^{\dist_e}u(0_2)$  exists and is given by \eqref{eq:formula}.
\end{cor}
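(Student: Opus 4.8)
The plan is to deduce this corollary directly from part $(3)$ of the preceding proposition on separable weights, which guarantees that $\Delta_\mu^{\dist_e}u(0_2)$ exists and equals $\tfrac{1}{2}\Tr(M_\nu\nabla^2 u)(0_2)$ for every $u$ twice differentiable at $0_2$ as soon as $\int_{\mathbb{S}^{1}}\theta\, g(\theta)\di\sigma(\theta)=0_2$. Thus it suffices to check that the prescribed form of $g$ forces this vector-valued integral to vanish when $n=2$, and then invoke that part of the proposition verbatim.

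First I would parametrize $\mathbb{S}^1$ by $t\mapsto e^{\mathbf{i}t}=(\cos t,\sin t)$ with $t\in[0,2\pi)$, so that the normalized surface measure $\sigma$ becomes $(2\pi)^{-1}\di t$, a fixed positive multiple of Lebesgue measure on $[0,2\pi]$; multiplying an integrand by a positive constant does not affect whether its integral vanishes. Writing the hypothesis as $g(e^{\mathbf{i}t})=c+\phi(t)$, the two coordinates of $\int_{\mathbb{S}^1}\theta\, g(\theta)\di\sigma(\theta)$ are then proportional to $\int_0^{2\pi}\cos t\,(c+\phi(t))\di t$ and $\int_0^{2\pi}\sin t\,(c+\phi(t))\di t$.

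Next I would expand each of these. The constant part contributes $c\int_0^{2\pi}\cos t\,\di t=0$ and $c\int_0^{2\pi}\sin t\,\di t=0$. For the $\phi$-part, since $\phi$ is a finite linear combination of the functions $\cos(mt)$ and $\sin(mt)$ with $m\ge 2$, the orthogonality relations of the trigonometric system on $[0,2\pi]$ — namely $\int_0^{2\pi}\cos t\cos(mt)\di t=\int_0^{2\pi}\cos t\sin(mt)\di t=0$ and $\int_0^{2\pi}\sin t\cos(mt)\di t=\int_0^{2\pi}\sin t\sin(mt)\di t=0$ for every $m\ge 2$ — show that both integrals vanish. Hence $\int_{\mathbb{S}^1}\theta\, g(\theta)\di\sigma(\theta)=0_2$, and part $(3)$ of the previous proposition applies and yields the claim.

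There is essentially no obstacle here: the argument is a one-line Fourier orthogonality computation feeding into an already established result. The only points deserving a word of care are the harmless positive constant coming from the normalization of $\sigma$ on $\mathbb{S}^1$, and the fact that the standing hypotheses inherited from the previous proposition — that $w$ is a vanishing weight which is separable near $0_2$ — already ensure that $g=c+\phi$ is a legitimate nonnegative angular part and that $c_f\in(0,+\infty)$, so that $\nu=(c_f\, g)\sigma$ is a well-defined Radon measure and \eqref{eq:formula} is meaningful. One may additionally remark, using the converse half of part $(1)$, that this sufficient condition is sharp in the present setting: if the first-order Fourier coefficients of $g(e^{\mathbf{i}\,\cdot})$ do not both vanish, then $\int_{\mathbb{S}^1}\theta\, g(\theta)\di\sigma(\theta)\neq 0_2$, and choosing $u$ with $\nabla u(0_2)$ not orthogonal to this vector makes $\Delta_\mu^{\dist_e}u(0_2)$ diverge.
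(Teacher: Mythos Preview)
Your proof is correct and follows essentially the same route as the paper: parametrize $\mathbb{S}^1$ by $t\mapsto e^{\mathbf{i}t}$, observe that $\int_{\mathbb{S}^1}\theta\,g(\theta)\di\sigma(\theta)=0_2$ amounts to the vanishing of the first Fourier coefficients of $t\mapsto g(e^{\mathbf{i}t})$, verify this from the assumed form $c+\phi$ via trigonometric orthogonality, and invoke part~(3) of the preceding proposition. The paper additionally phrases the vanishing of the first Fourier coefficients as an equivalence with the decomposition $h=c+\phi$, which matches your closing sharpness remark based on part~(1).
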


\begin{proof}
Let $h \in L^1(\setR)$ be $2 \pi$-periodic and such that $g(e^{\bold{i} t})=h(t)$ for any $t \in \setR$. Since
\[
\int_{\mathbb{S}^{1}} \theta g(\theta) \di \sigma(\theta)  = \int_{0}^{2\pi} e^{\bold{i} t} g(e^{\bold{i} t}) \di t = \int_{0}^{2\pi} e^{\bold{i} t} h(t) \di t
\]
then
\[
\int_{\mathbb{S}^{1}} \theta g(\theta) \di  \sigma(\theta) = 0_2 \quad \Longleftrightarrow \quad
(*) \,:\, \begin{cases}
\int_0^{2\pi} \cos(t)h(t)  \di t=0,\\
\int_0^{2\pi} \sin(t) h(t) \di t=0.
\end{cases} 
\]
Since $g$ is non-negative, so is $h$,  hence $(*)$ is equivalent to
\[
h=c + \phi
\]
where $c\ge 1$ and $\phi \in \Span(\{\cos (m \cdot), \sin (m\cdot )\}_{m \ge 2})$, thanks to the Fourier theory.
\end{proof}

\bibliographystyle{plain}
\bibliography{Minne_Tewodrose_v3}

\end{document}